\theoremstyle{theorem}
\newtheorem{theorem}{Theorem}[section]
\newtheorem{lemma}[theorem]{Lemma}
\newtheorem{definition}[theorem]{Definition}
\newtheorem{proposition}[theorem]{Proposition}
\newtheorem{convention}[theorem]{Convention}
\theoremstyle{remark}
\newtheorem{remark}{Remark}
\renewcommand\Re{\operatorname{Re}}
\renewcommand\Im{\operatorname{Im}}
\author{Evan Randles\thanks{This material is based upon work supported by the National Science Foundation Graduate Research Fellowship under Grant No. DGE-0707428}\ \thanks{Corresponding author: edr62@cornell.edu}\\
\normalsize  Center for Applied Mathematics\\[-0.8ex]
\normalsize Cornell University\\
\and
Laurent Saloff-Coste\thanks{This material is based upon work supported by the National Science Foundation under Grant No. DMS-1004771}\\
\normalsize Department of Mathematics\\[-0.8ex]
\normalsize Cornell University\\
}
\title{On the convolution powers of complex functions on $\mathbb{Z}$}
\date{}
\begin{document}
\maketitle

\begin{abstract} 
The local limit theorem describes the behavior of the convolution powers of a probability distribution supported on $\mathbb{Z}$. In this work, we explore the role played by positivity in this classical result and study the convolution powers of the general class of complex valued functions finitely supported on $\mathbb{Z}$. This is discussed as de Forest's problem in the literature and was studied by Schoenberg and Greville. Extending these earlier works and using techniques of Fourier analysis, we establish asymptotic bounds for the sup-norm of the convolution powers and prove extended local limit theorems pertaining to this entire class. As the heat kernel is the attractor of probability distributions on $\mathbb{Z}$, we show that the convolution powers of the general class are similarly attracted to a certain class of analytic functions which includes the Airy function and the heat kernel evaluated at purely imaginary time.
\end{abstract}

\noindent{\small\bf Keywords:} Convolution powers, Local limit theorems.\\

\noindent{\small\bf Mathematics Subject Classification:} Primary 42A85; Secondary  60F99.

\section{Introduction}\label{introsec}

\noindent Let $\phi:\mathbb{Z}\rightarrow \mathbb{C}$ be a finitely supported
function. We wish to study the convolution powers of $\phi$, that is, the functions $\phi^{(n)}:\mathbb{Z}\rightarrow\mathbb{C}$
defined iteratively by 
\begin{equation*}
 \phi^{(n)}(x)=\sum_{y\in \mathbb{Z}}\phi^{(n-1)}(x-y)\phi(y),
\end{equation*}
where $\phi^{(1)}=\phi$. This study has been previously motivated by problems in data smoothing and numerical difference schemes for partial differential equations \cite{TNEG,IJS,VT1,VT2}. We encourage the reader to see the recent article \cite{DSC1} for background discussion and pointers to the literature.\\

\noindent In the case that the support of $\phi$ is empty or
contains a single point, the convolution powers of $\phi$ are rather easy to
describe. The present article focuses on functions $\phi$ with finite
support consisting of more than one point; in this case we say
that the support of $\phi$ is \textit{admissible}. When the function $\phi$ is a probability distribution, i.e., it is
non-negative and satisfies
\begin{equation*}
\sum_{x\in\mathbb{Z}}\phi(x)=1,
\end{equation*}
the behavior of $\phi^{(n)}$ for large values of $n$ is well-known and is the subject of the local limit theorem. A modern treatment of this classical result can be found in Chapter $2$ of \cite{LL} (see also Chapter $2$ of \cite{FS}).  Our aim is to extend the results of \cite{DSC1} and describe the limiting behavior for the
general class of complex valued functions on $\mathbb{Z}$ with admissible
support. In particular, we give bounds on the supremum of $|\phi^{(n)}|$
and prove ``generalized'' local limit theorems.\\
 
\noindent As an example, we consider the function $\phi:\mathbb{Z}\rightarrow \mathbb{C}$ defined by
\begin{equation}\label{ex1def}
 \phi(0)=\frac{1}{8}(5-2i)\hspace{1cm}\phi(\pm 1)=\frac{1}{8}(2+i)\hspace{1cm}\phi(\pm 2)=-\frac{1}{16}\hspace{1cm}
\end{equation}
and $\phi=0$ otherwise. The convolution powers $\phi^{(n)}$ for $n=100,1000,10000$ are illustrated in Figures \ref{fig:ex1abs} and \ref{fig:ex1real}. We make two crucial observations about these graphs: First, it appears that the supremum $\|\phi^{(n)}\|_{\infty}$ is decaying on the order of $n^{-1/2}$; this is consistent with the classical theory for probability distributions. Second, as $n$ increases, $|\phi^{(n)}(x)|$ appears to be constant on increasingly large intervals centered at $0$. This is in stark contrast to the behavior described by the classical local limit theorem for probability distributions.  In the present article, we prove that there are constants $C,C'>0$ for which
\begin{equation*}
Cn^{-1/2}\leq \|\phi^{(n)}\|_{\infty}\leq C'n^{-1/2}.
\end{equation*}
We also show that
\begin{equation*}
\phi^{(n)}(\lfloor xn^{1/2}\rfloor)=\frac{n^{-1/2}}{\sqrt{4\pi
i/8}}e^{-8|x|^2/4i}+o(n^{-1/2})
\end{equation*}
for $x$ in any compact subset of $\mathbb{R}$. Here, $\lfloor \cdot\rfloor$ denotes the greatest integer function. We note that this approximation cannot hold uniformly for all $x\in\mathbb{R}$ because the modulus of $(4\pi i/8)^{-1/2}\exp(-8|x|^2/4i)$  is a non-zero constant whereas $\phi^{(n)}$ has finite support for each $n$. For comparison with Figure
\ref{fig:ex1real}, Figure \ref{fig:ex1realtheory}
shows the graph of $\Re((4\pi ni/8)^{-1/2}\exp(-8|x|^2/4ni))$ for
$n=100,1000,10000$. We will return to this example in Subsection \ref{ex1} and justify the claims made above.\\

\vspace{1in}

\begin{figure}[h!]
\centering\includegraphics[width=5in]{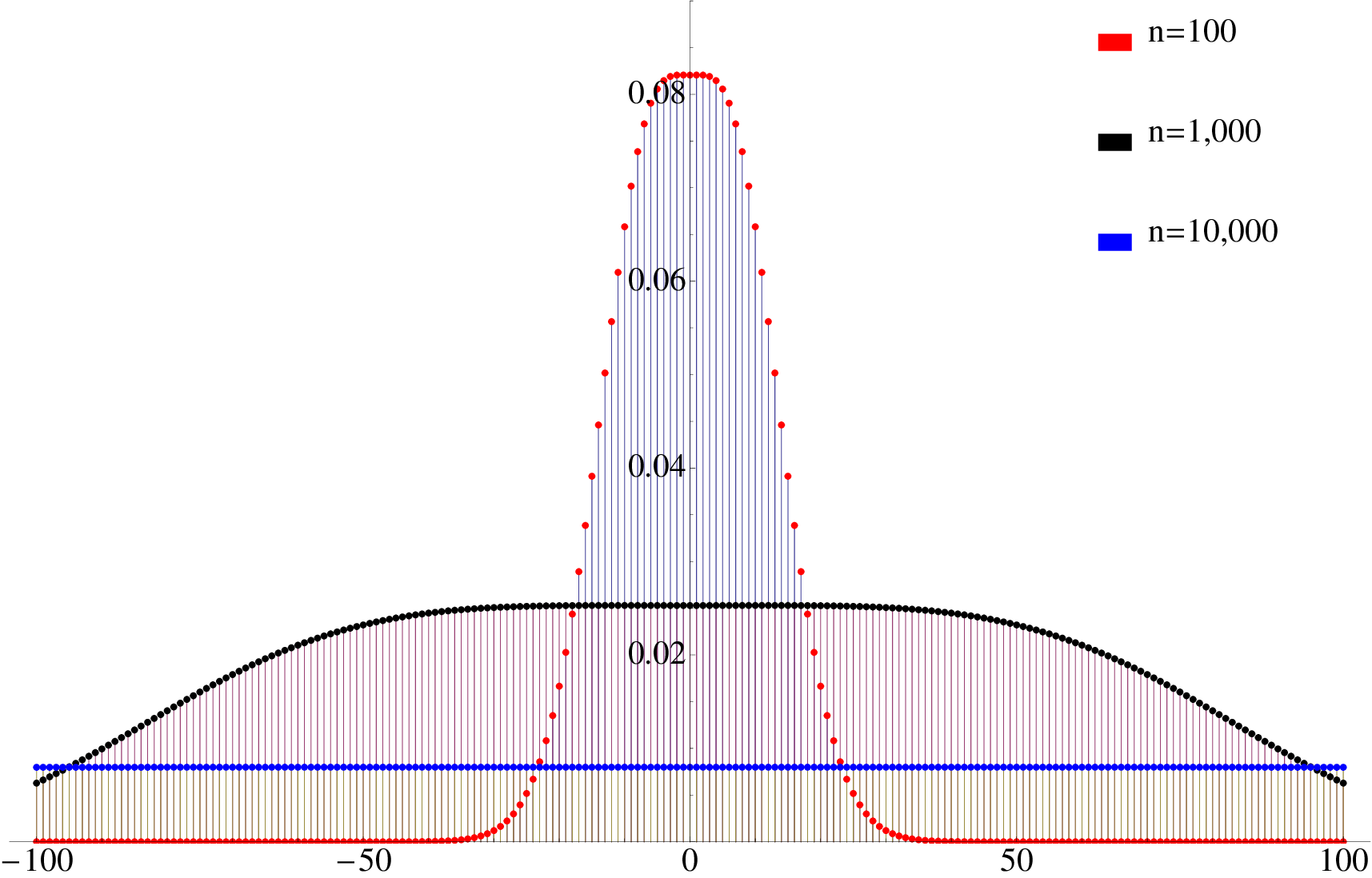}
\caption{$|\phi^{(n)}|$ for $n=100,1000,10000$}
\label{fig:ex1abs}
\end{figure}

\begin{figure}[h!]
\centering\includegraphics[width=5in]{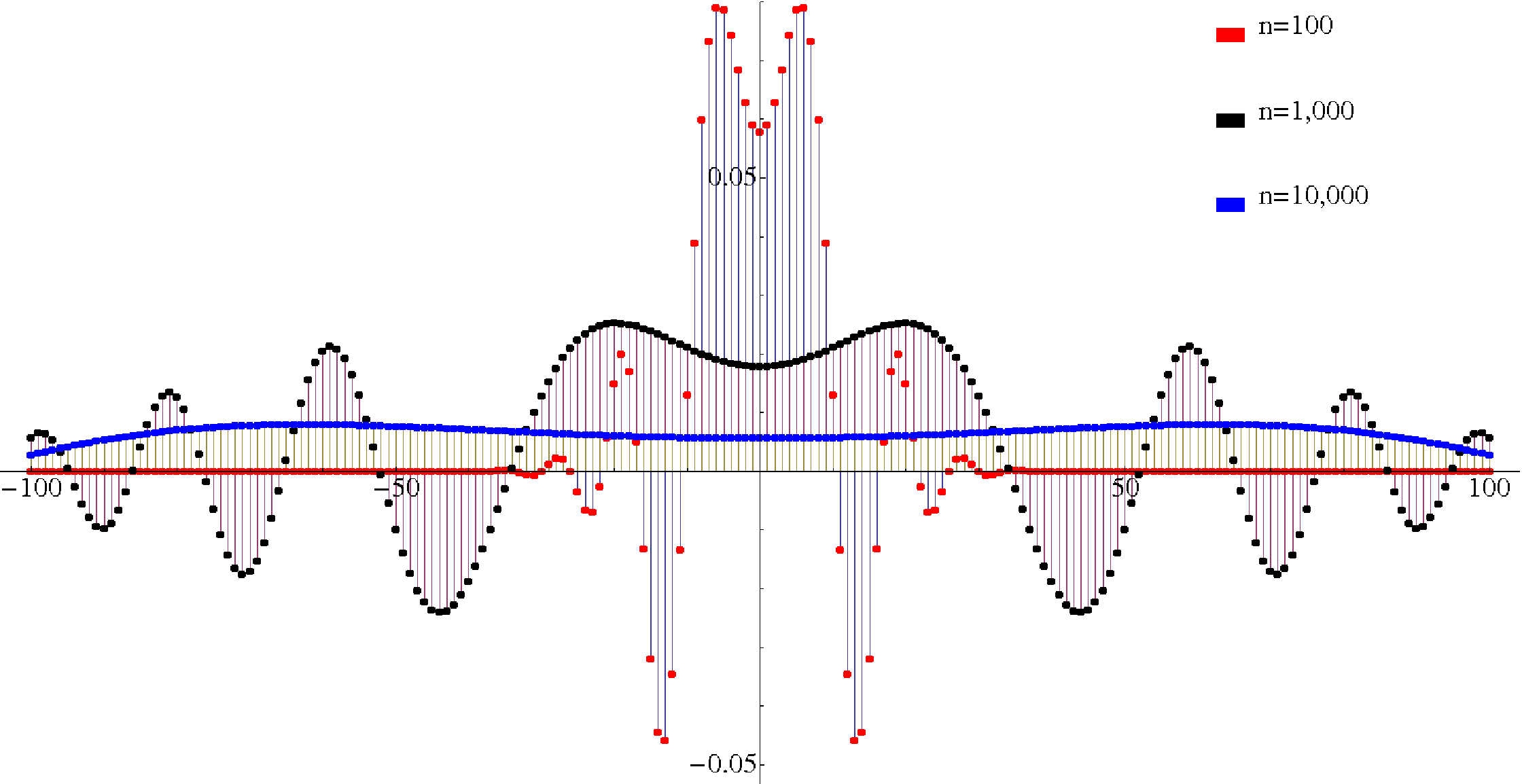}
 \caption{$\Re(\phi^{(n)})$ for $n=100, 1000, 10000$}
\label{fig:ex1real}
\end{figure}

\begin{figure}[h!]
\centering\includegraphics[width=5in]{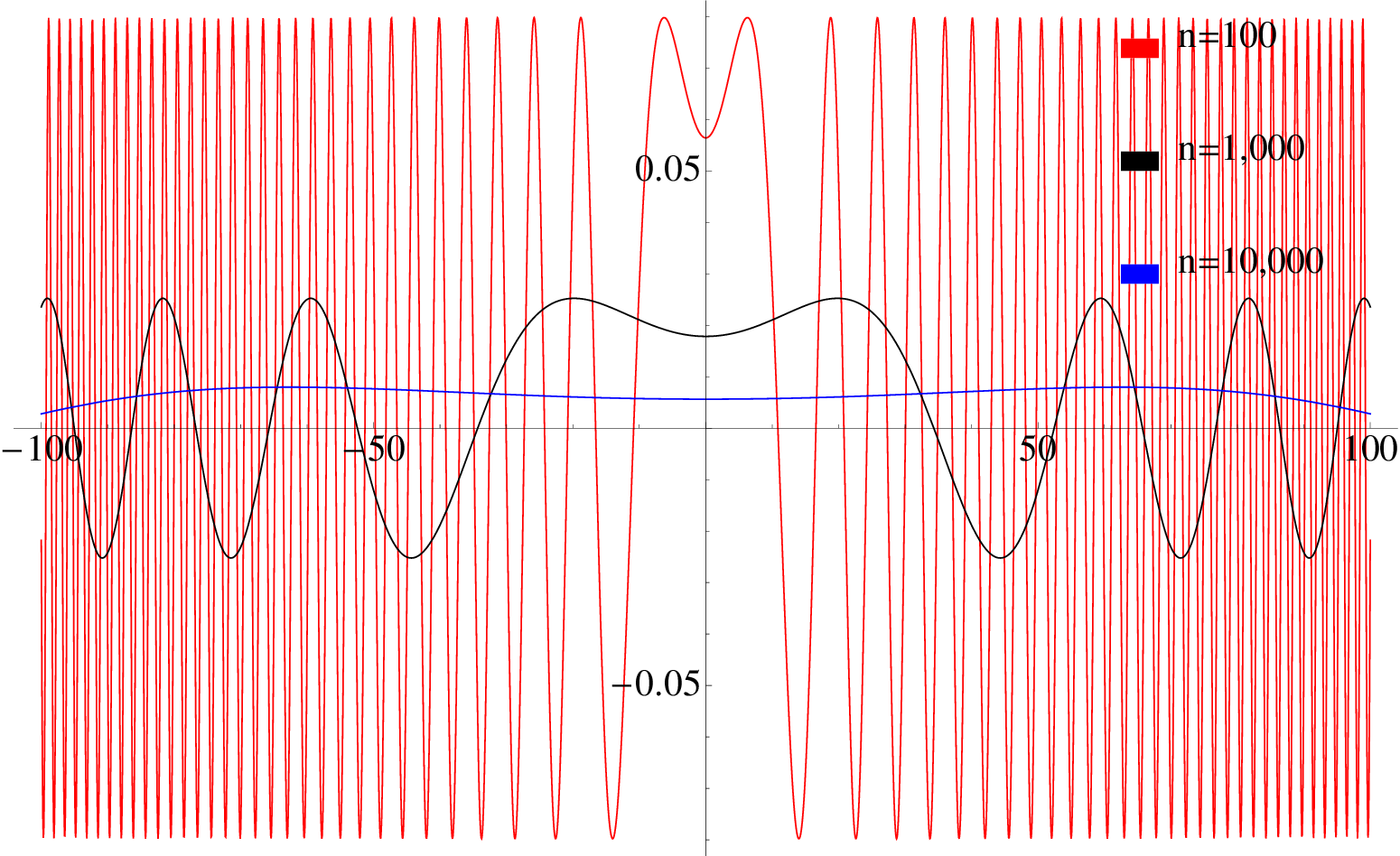}
 \caption{$\Re((4\pi in/8)^{-1/2}\exp(-8|x|^2/4ni))$ for $n=100, 1000,
10000$}
\label{fig:ex1realtheory}
\end{figure}

\vspace{1in}

\noindent The Fourier transform is central to the arguments made in this article. We recall its definition: For $\phi:\mathbb{Z}\rightarrow
\mathbb{C}$, finitely supported, the Fourier transform of $\phi$ is the function $\hat{\phi}:\mathbb{R}\rightarrow\mathbb{C}$ defined by
\begin{equation}\label{fouriertransform}
\hat{\phi}(\xi)=\sum_{x\in\mathbb{Z}}\phi(x)e^{ix\xi}
\end{equation}
for $\xi\in\mathbb{R}$.\\

\noindent Our first main result is illustrated in the following theorem.
\begin{theorem}\label{mainbound}
Let $\phi:\mathbb{Z}\rightarrow \mathbb{C}$ have admissible support and let 
$A=\sup_\xi|\hat{\phi}(\xi)|.$ Then there is a natural number $m\geq 2$, and positive constants $C$ and $C'$ such that
\begin{equation}\label{mainboundsupeq}
Cn^{-1/m}\leq A^{-n}\|\phi^{(n)}\|_{\infty}\leq C'n^{-1/m}
\end{equation}
for all natural numbers $n$.
\end{theorem}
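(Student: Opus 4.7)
The plan rests on the Fourier inversion formula
\[
\phi^{(n)}(x) = \frac{1}{2\pi}\int_{-\pi}^{\pi}\hat\phi(\xi)^n e^{-ix\xi}\,d\xi,
\]
which reduces the estimate to an asymptotic analysis of an oscillatory integral. I would focus on $\Omega=\{\xi_0\in[-\pi,\pi):|\hat\phi(\xi_0)|=A\}$; this set is finite because $A^2-|\hat\phi|^2$ is a non-negative real-analytic trigonometric polynomial which is not identically zero (admissibility rules out constant modulus). Near each $\xi_0\in\Omega$, $\hat\phi$ is non-vanishing, so a branch of the logarithm lets me write $\hat\phi(\xi)/(Ae^{i\theta_0})=e^{iP_0(\xi-\xi_0)}$, where $P_0$ is analytic, $P_0(0)=0$, and $\Im P_0\ge 0$ locally; the critical-point condition for $|\hat\phi|^2$ at $\xi_0$ forces $P_0'(0)\in\mathbb{R}$. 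I would then define $m_0\ge 2$ as the smallest integer with $P_0^{(m_0)}(0)\neq 0$ (such $m_0$ exists: otherwise $P_0$ is affine near $0$, which forces $\phi$ to be a single Dirac mass and contradicts admissibility), and set $m=\min_{\xi_0\in\Omega}m_0$.

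For the upper bound I would partition $[-\pi,\pi)$ into disjoint small neighborhoods $U_0$ of the points of $\Omega$ plus a complementary region on which $|\hat\phi|\le(1-\rho)A$ for some $\rho>0$; the complement contributes $O((1-\rho)^n)$ and is negligible. On each $U_0$, with $c_{m_0}=P_0^{(m_0)}(0)/m_0!$, the substitution $\eta=n^{-1/m_0}u$ yields
\[
\int_{U_0}\hat\phi^n e^{-ix\xi}\,d\xi
 = A^n e^{i(n\theta_0-x\xi_0)}\,n^{-1/m_0}\!\int e^{-iyu+ic_{m_0}u^{m_0}+E_n(u)}\,du,
\]
where $y=(x-nP_0'(0))n^{-1/m_0}$ and $E_n(u)\to 0$ uniformly on compact sets. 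The required input is that the limiting family $y\mapsto\int e^{-iyu+ic_{m_0}u^{m_0}}\,du$ is uniformly bounded in $y$; this is a classical oscillatory-integral fact (Fresnel when $m_0=2$, Airy when $m_0=3$, Pearcey-type in general) established by stationary-phase or direct integration by parts. This delivers $|\phi^{(n)}(x)|\le CA^n n^{-1/m_0}\le CA^n n^{-1/m}$ uniformly in $x\in\mathbb{Z}$.

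For the lower bound, choose $\xi_*\in\Omega$ achieving $m_*=m$ and put $x_n=\lfloor nP_*'(0)+yn^{1/m}\rfloor$ for a $y$ to be selected. The same rescaling exhibits $A^{-n}\phi^{(n)}(x_n)$, modulo a unimodular prefactor, as $(2\pi)^{-1}n^{-1/m}$ times a sequence converging to
\[
J(y)=\int_{\mathbb{R}}e^{-iyu+ic_m u^m}\,du.
\]
Because $J$ is (up to normalization) the Fourier transform of the nonzero tempered distribution $e^{ic_m u^m}$, it is not identically zero; picking $y$ with $J(y)\neq 0$ gives $A^{-n}|\phi^{(n)}(x_n)|\ge Cn^{-1/m}$ for all large $n$, with finitely many initial values of $n$ absorbed into the constant. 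The principal obstacle is the rigorous control of the remainder $E_n(u)$ over the full rescaled domain: when the vanishing order of $\Im P_0$ strictly exceeds $m_0$ (as in the paper's introductory example where $m_0=2$ but $\Im P_0=O(\eta^4)$), one lacks a uniform absolute dominator and must combine a pointwise-limit argument on $|u|\le R$ with an integration-by-parts estimate on $|u|>R$. A secondary difficulty arises when $|\Omega|>1$: one must rule out destructive cancellation at $x_n$ among contributions from different peaks, handled by choosing $y$ so that a single peak dominates.
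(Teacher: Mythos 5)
Your overall strategy---localize near the peak set $\Omega$, rescale, and control the oscillatory integrals by van der Corput---mirrors the paper's approach. However, the definition of $m$ is backwards. You set $m=\min_{\xi_0\in\Omega}m_0$, whereas the correct choice (and the paper's, via Convention \ref{constantsconvention}) is $m=\max_{\xi_0\in\Omega}m(\xi_0)$. Your own chain $n^{-1/m_0}\le n^{-1/m}$ then fails whenever $m_0>m$: since $m_0\ge m=\min$, one has $1/m_0\le 1/m$ and hence $n^{-1/m_0}\ge n^{-1/m}$, the opposite inequality. The peak with the \emph{largest} order contributes the \emph{slowest} decay $\sim n^{-1/m_0}$ and therefore dominates the sup norm; with your $m=\min$ the upper bound $\|\phi^{(n)}\|_\infty\lesssim A^n n^{-1/m}$ is simply false whenever $\Omega$ contains points of distinct orders.

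The lower bound also has a genuine gap that you flag but do not resolve. When several peaks $\zeta_1,\dots,\zeta_r$ share the same drift constant $\alpha$ and the same maximal order $m$, all of their contributions to $\phi^{(n)}(\lfloor\alpha n+yn^{1/m}\rfloor)$ are of size $n^{-1/m}$ with $n$-dependent unimodular coefficients $e^{-ix_n\zeta_l}\hat\phi(\zeta_l)^n$; there is no choice of $y$ that makes a single peak dominate, so ``ruling out destructive cancellation'' is not a secondary difficulty but the crux of the argument. The paper handles this with a Vandermonde matrix argument (Lemma \ref{vandermondelemma}): by comparing the values at $r$ consecutive integer shifts $x_n, x_n+1,\dots,x_n+r-1$, one shows at least one of them is bounded below. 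That idea is absent from your sketch. Finally, your upper bound is framed around a limiting integral $J(y)$ together with a remainder $E_n(u)$ that you acknowledge is not controlled over the full rescaled domain; the paper's Lemma \ref{bigolem} avoids the limit entirely and bounds the integral directly by splitting $[-\delta,\delta]$ at $\pm n^{-1/m}$ and applying van der Corput's second-derivative estimate on the outer pieces---a cleaner route that closes the gap you identify.
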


\begin{remark}
The natural number $m\geq 2$ appearing in Theorem \ref{mainbound} is consistent with those appearing Theorems in \ref{mainstrong} and \ref{mainweak}; upon dividing $\phi$ by $A$, it is defined by \eqref{defofmphi}.
\end{remark}

\noindent In the classical local limit theorem, the convolution powers of a
probability distribution are approximated by the heat kernel, an analytic
function. In the present setting, the convolution
powers $\phi^{(n)}$ are analogously approximated by certain analytic
functions. We now define these so-called \emph{attractors}: Let $m\geq 2$ be a natural
number and $\beta$ be a non-zero complex number for which $\Re(\beta)\geq 0$. We define $H_m^{\beta}:\mathbb{R}\rightarrow\mathbb{C}$ by
\begin{equation}\label{Hdef}
H_m^{\beta}(x)=\frac{1}{2\pi}\int_{\mathbb{R}}e^{-i xu}e^{-\beta u^m}du
\end{equation}
provided the integral converges as an improper Riemann integral. If additionally, for each $\epsilon>0$ there exists $M_\epsilon>0$ such that
\begin{equation*}
\left|H_m^{\beta}(x)-\frac{1}{2\pi}\int_M^Me^{-ixu}e^{-\beta u^m}\,du\right|<\epsilon
\end{equation*}
for all $M\geq M_\epsilon$ and $x\in S\subseteq\mathbb{R}$, we say that the integral defining $H_m^{\beta}$ converges uniformly in $x$ on $S$. When $\Re(\beta)>0$  and $m$ is an even natural number, it is easy to see that 
\begin{equation*}
 |e^{-i xu}e^{-\beta u^m}|= e^{-\Re(\beta)u^m}\in L^1(\mathbb{R})
\end{equation*}
whence the defining integral converges uniformly in $x$ on $\mathbb{R}$. In this case, $H_m^{\beta}$ is equivalently defined by its inverse Fourier transform, $e^{-\beta u^m}$. In the case that $\Re(\beta)=0$, it is not immediately clear for which values of $m$ or in what sense the integral in \eqref{Hdef} will converge. It will be shown that when $m>2$, the integral converges uniformly in $x$ on $\mathbb{R}$ and, when $m=2$, it converges uniformly in $x$ on any compact set. This is the subject of Proposition \ref{Hconverge}. The proposition extends the results of \cite{TNEG} in which only odd values of $m$ (for $\Re(\beta)=0$) were considered. \\

\noindent In the case that $m\geq 2$ is even and $\Re(\beta)\geq 0$,  $H_m^{\beta}$ is 
the integral kernel of the holomorphic semigroup $T_\beta= e^{-\beta (\Delta)^{m/2}}$ generated by the non-negative self-adjoint operator $(\Delta)^{m/2}$ on $L^2(\mathbb{R})$; here, $\Delta$ is the unique self-adjoint extension of $-(d/dx)^2$ originally defined on smooth compactly supported functions on $\mathbb{R}$. In the specific case that $m=2$,
\begin{equation}\label{heatker}
 H_2^{\beta}(x)=\frac{1}{\sqrt{4\pi \beta}}e^{-\frac{|x|^2}{4\beta}}
\end{equation}
is the heat kernel evaluated at complex time $\beta$. There is an extensive theory concerning these semigroups and generalizations thereof for $\Re(\beta)>0$. In the context of $\mathbb{R}^d$,  we refer the reader to the articles \cite{Davies1995,BarbatisDavies1995} which consider general self-adjoint operators with measurable coefficients, called superelliptic operators, each comparable to $(\Delta)^{m/2}$ for some even $m\geq 2$. In the context of Lie groups, such generalizations are treated by \cite{Robinsonbook,Robinson1991, Dungey2002}. An integral piece of this theory concerns off-diagonal estimates for these kernels. In our setting, this is the estimate \begin{equation}\label{Hassymptoticnicecase}
|H_m^{\beta}(x)|\leq C\exp(-B|x|^{\frac{m}{m-1}})
\end{equation}
for all $x\in\mathbb{R}$, where $C,B>0$. Given \eqref{Hdef}, a complex change of variables via contour integration followed by a minimization argument easily yields the estimate \eqref{Hassymptoticnicecase} (see Proposition 5.3 of \cite{Robinsonbook}).\\

\noindent Viewing things from a slightly different perspective, when $m\geq 2$ is even and $\Re(\beta)>0$, the function $Z:(0,\infty)\times\mathbb{R}\rightarrow\mathbb{C}$, defined by
\begin{equation*}
Z(t,x)=H_m^{t\beta}(x),
\end{equation*}
is a fundamental solution to the constant-coefficient parabolic equation   
\begin{equation*}
\frac{\partial}{\partial t}+i^m\beta\frac{\partial^m}{\partial x^m}=0.
\end{equation*}
The treatise \cite{Eidelmanbook1969} by S. D. Eidelman gives an extensive treatment of such ``higher order'' parabolic equations with variable coefficients on $\mathbb{R}^d$. For second order parabolic systems ($m=2$), A. Friedman's classic text \cite{Friedmanbook1964} is an excellent reference. 

\begin{remark}
In the case that $\Re(\beta)>0$ and $m$ is even, the
function $H_m^{\beta}$ and the function $H_{m,b}$ used in Theorem 2.3 of
\cite{DSC1} and defined by its Fourier transform,
$\hat{H}_{m,b}(\xi)=e^{-(1+ib)\xi^m},$ are connected via the relation
\begin{equation*}
H_{m,\frac{\Im(\beta)}{\Re(\beta)}}\left(\frac{x}{(\Re(\beta))^{1/m}}
\right)=(\Re(\beta))^{1/m}H_m^{\beta}(x)\\
\end{equation*}
which follows from the change of variables $u\mapsto (\Re(\beta))^{1/m}u$.
\end{remark}

\noindent In the case that $m\geq 2$ is even and $\beta>0$, the functions $H_m^{\beta}$ are real valued and when $m>2$ they take on both positive and negative values. As the classical Wiener measure is defined by the transition kernel $H_2^{1}$, V. Krylov \cite{Krylov1960} and later K. Hochberg \cite{Hochberg1978} considered finitely additive signed measures on path space defined by $H_m^{1}$ for $m\in\{4,6,8,\dots\}$. Recently, D. Levin and T. Lyons \cite{LevinLyons2009} used rough path theory to study these measures. Both Krylov and Hochberg associated something like a process to such finitely additive measures, called signed Wiener measures in \cite{Hochberg1978}, to mimic the way that Brownian motion is associated to Wiener measure. This theory has been pursued recently by a number of authors \cite{HochbergOrsingher1996, Lachal2007, Lachal2012, Nishioka1996, Sato2002}, and such ``processes'' are now called pseudo-processes; the pseudo-process corresponding to $H_4^1$ is called the biharmonic pseudo-
process. We do not pursue signed Wiener measures or pseudo-processes here.\\

\begin{figure}[h!]
\centering\includegraphics[width=5in]{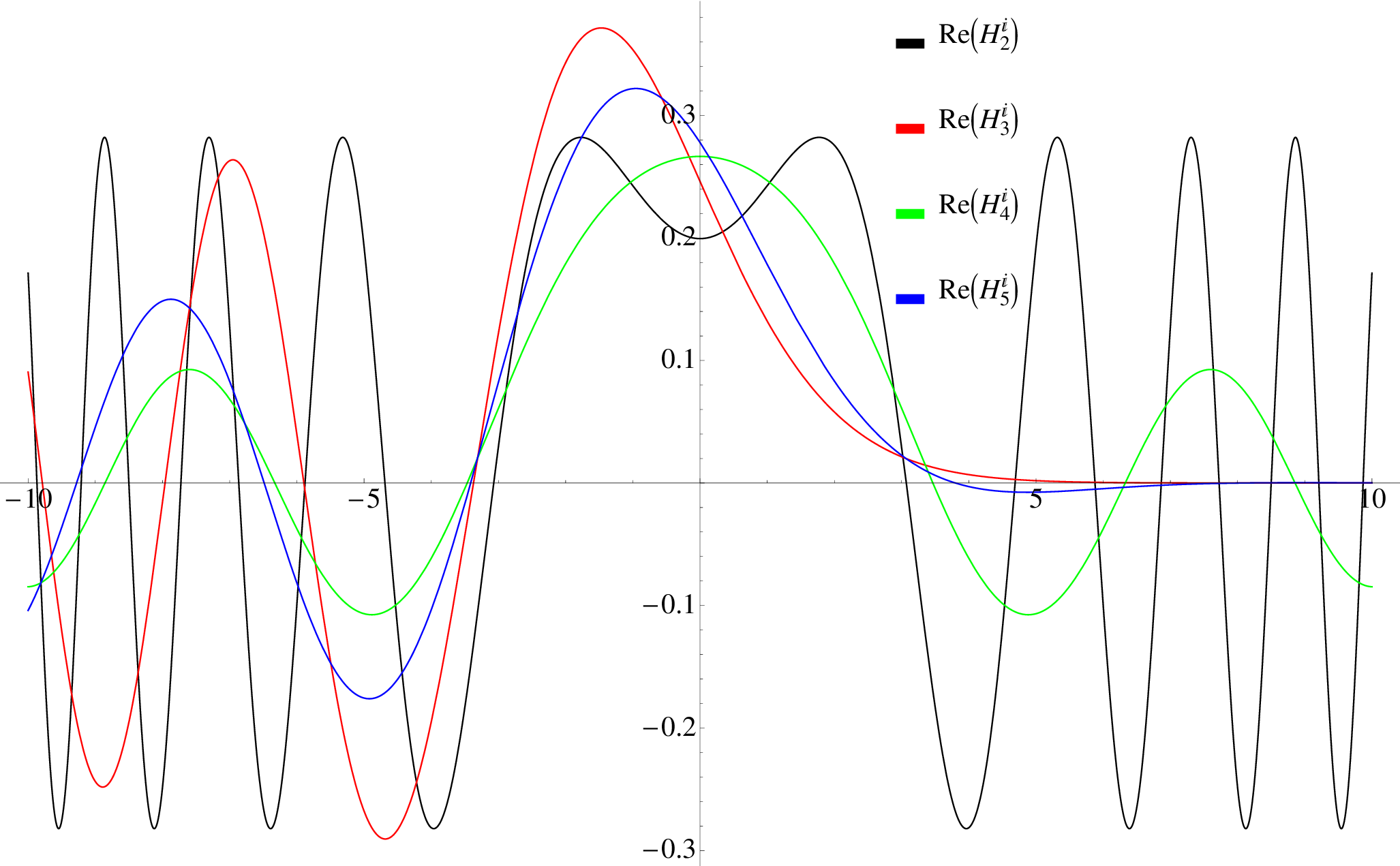}
\caption{$\Re(H_m^i(x))$ for $m=2,3,4,5$}
\label{fig:Hgraph}
\end{figure}

\noindent When $\beta$ is purely imaginary and $m\geq 2$, the situation is very different from those described above. The graphs of $\Re(H_m^i(x))$ for $m=2,3,4,5$ are illustrated in Figure \ref{fig:Hgraph}. When $\beta=i/m$, $H_m^{\beta}=H_m^{i/m}$ satisfies the ordinary differential equation
\begin{equation*}
\frac{d^{m-1}y}{dx^{m-1}}+(-i)^{m-1}xy=0,
\end{equation*}
c.f., Remark $3$ of \cite{RH}. When $m=3$, this is Airy's equation and $H_3^{i/3}(x)$ is the famous Airy function, $\text{Ai}(x)$. The study of the functions $H_m^{\beta}$, for $\beta$ purely imaginary, dates back the 1920's. Such functions are closely related to those used by Hardy and Littlewood \cite{HardyLittlewood1920} in their consideration of Waring's problem. Using the method of steepest descent, Burwell \cite{Burwell1924} deduced asymptotic expansions for $H_m^{\beta}$ for all $m>2$ (see also \cite{Bakhoom1933}). Concerning global bounds for $H_m^{\beta}$, we cannot expect to have estimates of the form \eqref{Hassymptoticnicecase} when $\beta$ is purely imaginary, for, in view of \eqref{heatker}, $x\mapsto |H_2^{\beta}(x)|$ is constant. When $m>2$, using oscillatory integral methods, we show that 
\begin{equation*}
|H_m^{\beta}(x)|\leq \frac{A}{|x|^{\frac{(m-2)}{2(m-1)}}}+\frac{B}{|x|}
\end{equation*}
for all real numbers $x$, where $A,B>0$. This estimate can also be deduced from the asymptotic expansions of Burwell \cite{Burwell1924}. Our estimates are seen to be sharp in view of this comparison.\\

\noindent Returning to our discussion of convolution powers, let us momentarily view the situation with a probabilistic eye. Suppose that $\mu$ is a signed Borel measure on $\mathbb{R}$ and $X_1,X_2,\dots$ are independent ``random'' variables each with distribution $\mu$. The distribution of the sum $S_n:=X_1+X_2+\cdots X_n$, for $n=1,2,\dots$, is the measure $\mu^{(n)}$ and can be computed by taking successive convolution powers of the measure $\mu$. Limit theorems are seen to be affirmative answers to the following question: Does $\mu^{(n)}$, properly scaled, converge in any sense as $n\rightarrow \infty$ and if so, to what? In \cite{Hochberg1978,Hochberg1980}, K. Hochberg proved a class of central limit theorems. They essentially state that, under certain conditions on $\mu$, there exists an even natural number $m\geq 2$ such that the signed Borel measures $\{\nu_n\}_{n\geq 1}$, defined by $\nu_n(B)=\mu^{(n)}(n^{1/m}B)$ for any Borel set $B$, converge weakly to the measure with density $H_m^1$ with 
respect to Lebesgue measure. R. Hersch \cite{RH} proved a class of central limit theorems in which ``random'' variables are allowed to take values in an abstract algebra over $\mathbb{R}$  (see also \cite{Zhukov1959}). Like Hochberg, Hersch's central limit theorems also involve weak convergence, however, the class of attractors in \cite{RH} is different. It consists of the Dirac mass and the measures with densities $H_m^{-i^m/m!}$ for all $m\geq 2$ such that $m\not\equiv 0\bmod 4$. Local limit theorems, by contrast, focus on convergence of the density of $\mu^{(n)}$. In our case, these are statements of uniform (or local uniform) convergence of $\phi^{(n)}(x)$ as $n\rightarrow\infty$. Local limit theorems, in the case that $\phi$ is generally real valued, were treated by I. Schoenberg \cite{IJS} and T. Greville \cite{TNEG} in connection to de Forest's problem in data smoothing. Their local limit theorems involve a certain subclass of our attractors, namely $H_m^{\beta}$ for $m\geq 2$ even and $\beta>0$, and 
$H_m^{i\tau}$ for $m>1$ odd and $\tau\in\mathbb{R}$. The local limit theorems of Schoenberg and Greville involve ad hoc assumptions that are too restrictive for us; Theorem \ref{mainstrong} extends their results. In the case that $\phi$ has admissible support, Theorem \ref{mainstrong} also extends the results of \cite{DSC1}. We refer the reader to Section $2$ of \cite{DSC1} for a brief review of local limit limit theorems and their connection to data smoothing and numerical difference schemes for partial differential equations.\\

\noindent The behavior of the convolution powers $\phi^{(n)}$ is determined by the local behavior of $\hat\phi$ by means of the Fourier inversion formula \eqref{fouriertransformidentity}. The latter two main results of this article, Theorems \ref{mainstrong} and \ref{mainweak}, are both stated under the assumption that $\sup_\xi|\hat\phi(\xi)|=1$; this can always be arranged by replacing $\phi$ by $A^{-1}\phi$ for an appropriate constant $A>0$. Theorems \ref{mainstrong} and \ref{mainweak} involve a number of constants and we now proceed to describe how they come about. First, we consider $\hat{\phi}(\xi)$ for $\xi\in(-\pi,\pi]$ and determine the set of points $\Omega\subseteq (-\pi,\pi]$ at which $|\hat\phi(\xi)|=\sup|\hat\phi|=1$. When $\phi$ is an aperiodic and irreducible random walk, this supremum is attained only at $0$ (see Lemma 2.3.1 of \cite{LL} and its subsequent remark), but in general, $|\hat\phi(\xi)|=1$ at multiple such points. In Section \ref{localphisec}, we show that the set $\Omega$ is 
finite. 
Second, for each $\xi_0\in\Omega$, we consider the Taylor expansion for $\log(\hat\phi(\xi+\xi_0)/\hat\phi(\xi_0))$ on a neighborhood of zero. In general, this series is of the form
\begin{equation*}
i\alpha\xi-\beta\xi^{m}+ o(\xi^{m})
\end{equation*}
as $\xi\rightarrow 0$, where $m=m(\xi_0)\in\{2,3,4,\dots\}$, $\alpha=\alpha(\xi_0)\in\mathbb{R}$ and $\beta=\beta(\xi_0)\in\mathbb{C}$ with $\Re(\beta(\xi_0))\geq 0$. Further, we show that $\Re(\beta(\xi_0))=0$ whenever $m(\xi_0)$ is odd. The constants $\alpha(\xi_0)$ and $\beta(\xi_0)$ play the roles of the mean and first non-vanishing moment of order $m(\xi_0)\geq 2$ for probability distributions (see Remark \ref{expectationremark} of Section \ref{localphisec}). Next, we set 
\begin{equation}\label{defofmphi}
m_\phi=\max_{\xi_0\in\Omega} m(\xi_0)
\end{equation}
and restrict our attention to the subset of points $\{\xi_1,\xi_2,\dots,\xi_R\}$ of $\Omega$ for which $m(\xi_q)=m_\phi$.  We show that the contribution to $\phi^{(n)}$ by $\hat\phi$ near $\xi_0\in\Omega(\phi)$ is on the order of $n^{-1/{m(\xi_0)}}$ (see Lemma \ref{bigolem}). Because $n^{-1/{m(\xi_0)}}=o(n^{-1/m_\phi})$ as $n\rightarrow\infty$ whenever $m(\xi_0)<m_\phi$, the influence on $\phi^{(n)}$ from such points is not seen in local limits; it is only the points $\xi_q$ for which $m(\xi_q)=m_\phi$ that matter. Finally, for each $q=1,2,\dots, R$, we set $\beta_q=\beta(\xi_q)$ and $\alpha_q=\alpha(\xi_q)$. \\

\noindent We now state our second main theorem, the first to involve local limits.

\begin{theorem}\label{mainstrong}
Let $\phi:\mathbb{Z}\rightarrow \mathbb{C}$ have admissible support and be such
that $\ \sup_\xi|\hat\phi(\xi)|=1$. Referring to the constants above and setting $m=m_\phi$, suppose additionally that 
\begin{equation}\label{mainstronghypoth}
 m>2\mbox{ or }\Re(\beta_q)>0\mbox{ for all }q=1,2,\dots, R.
\end{equation}
Then there exists a compact set $K\subseteq\mathbb{R}$ such that the supremum of $|\phi^{(n)}|$ is attained on 
\begin{equation}\label{mainstrongsetconclusion}
\left(\bigcup_{q=1}^{R}(\alpha_q n+ Kn^{1/m})\right)\bigcap \mathbb{Z}
\end{equation}
 and
\begin{equation}\label{mainstrongconvergenceconclusion}
\phi^{(n)}(x)=\sum_{q=1}^{R}n^{-1/m}e^{-ix\xi_q}\hat{\phi}(\xi_q)^n H_m^{\beta_q}\left(\frac{x-\alpha_qn}{n^{1/m}}\right)+o(n^{-1/m})
\end{equation}  
uniformly in $\mathbb{Z}$.  
\end{theorem}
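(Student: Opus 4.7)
The plan is to apply the Fourier inversion formula
\[
\phi^{(n)}(x)=\frac{1}{2\pi}\int_{-\pi}^{\pi}\hat\phi(\xi)^n e^{-ix\xi}\,d\xi
\]
and to split the integral into small pairwise disjoint open neighborhoods $N_{\xi_0}$ of each $\xi_0\in\Omega$, together with a remainder $E=(-\pi,\pi]\setminus\bigcup_{\xi_0\in\Omega}N_{\xi_0}$. Because $\Omega$ is finite and $|\hat\phi|<1$ off $\Omega$, continuity and compactness of $(-\pi,\pi]$ (viewed as a circle) give $r\in(0,1)$ with $|\hat\phi|\le r$ on $E$; hence the $E$-piece is $O(r^n)=o(n^{-1/m})$ uniformly in $x$. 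For a non-principal $\xi_0\in\Omega$ (i.e.\ $m(\xi_0)<m$) the same rescaling argument described below, applied with $m(\xi_0)$ in place of $m$, yields a contribution of modulus $O(n^{-1/m(\xi_0)})=o(n^{-1/m})$, uniformly in $x$, because $H_{m(\xi_0)}^{\beta(\xi_0)}$ is a bounded function.

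For each principal $\xi_q$ (with $m(\xi_q)=m$), substitute $\xi=\xi_q+\eta$ and then $\eta=u/n^{1/m}$. Writing $e^{-ix\eta}=e^{-i\alpha_q n\eta}e^{-iwu}$ with $w=(x-\alpha_q n)/n^{1/m}$ and factoring out $\hat\phi(\xi_q)^n e^{-ix\xi_q}$, the piece over $N_{\xi_q}$ becomes
\[
\frac{\hat\phi(\xi_q)^n e^{-ix\xi_q}}{2\pi\, n^{1/m}}\int_{|u|<\delta n^{1/m}}\exp\bigl(n\Gamma_q(u/n^{1/m})-i\alpha_q u\,n^{1-1/m}\bigr)\,e^{-iwu}\,du,
\]
where $\Gamma_q(\eta)=\log(\hat\phi(\xi_q+\eta)/\hat\phi(\xi_q))$. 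By the Taylor expansion $\Gamma_q(\eta)=i\alpha_q\eta-\beta_q\eta^m+o(\eta^m)$, the growing phases $\pm i\alpha_q u\,n^{1-1/m}$ cancel and the integrand converges pointwise to $e^{-\beta_q u^m}e^{-iwu}$, whose integral over $\mathbb{R}$ is $2\pi H_m^{\beta_q}(w)$. This recovers the leading term in \eqref{mainstrongconvergenceconclusion}; the task is to promote the pointwise convergence to an $o(1)$ error that is \emph{uniform} in $w\in\mathbb{R}$, and this is the main obstacle.

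When $\Re(\beta_q)>0$, a Taylor estimate of the form $\Re(n\Gamma_q(u/n^{1/m}))\le-\tfrac12\Re(\beta_q)|u|^m$ (valid after shrinking $\delta$) dominates the integrand by an integrable function independent of $w$ and $n$, so dominated convergence together with the standard Taylor remainder on compact $u$-sets gives uniform convergence. When $m>2$ while $\Re(\beta_q)=0$, the integrand has unit modulus and gain must be extracted from the oscillation in $e^{-iwu}$. I would split the $u$-integral at a large fixed radius $R$, treat $|u|\le R$ by pointwise Taylor control plus dominated convergence, and handle the tail $R\le|u|<\delta n^{1/m}$ by integration by parts against the phase $-\beta_q u^m-uw$, whose derivative $-m\beta_q u^{m-1}-w$ satisfies $\lvert -m\beta_q u^{m-1}-w\rvert\gtrsim|u|^{m-1}$ as soon as $|u|\ge C|w|^{1/(m-1)}$; a complementary thin shell $|u|\asymp|w|^{1/(m-1)}$ is controlled by a van der Corput bound. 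This is the same mechanism that produces the global oscillatory estimate $|H_m^{\beta_q}(x)|\le A|x|^{-(m-2)/2(m-1)}+B|x|^{-1}$ recalled in the introduction, and because $m>2$ the tail error tends to $0$ uniformly in $w$ as $R\to\infty$.

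Finally, in both subcases of \eqref{mainstronghypoth}, $|H_m^{\beta_q}(w)|\to 0$ as $|w|\to\infty$ (by \eqref{Hassymptoticnicecase} when $\Re(\beta_q)>0$; by the oscillatory bound above when $m>2$, $\Re(\beta_q)=0$). Given any $\varepsilon>0$, choose a compact $K\subseteq\mathbb{R}$ on which each $\sup|H_m^{\beta_q}|$ is attained up to $\varepsilon$ and off which $|H_m^{\beta_q}|<\varepsilon$ for every $q$. Combining \eqref{mainstrongconvergenceconclusion} with the lower bound $\|\phi^{(n)}\|_\infty\ge c\,n^{-1/m}$ from Theorem \ref{mainbound}, and choosing $\varepsilon$ small enough that $R\varepsilon<c/2$, we obtain that for all sufficiently large $n$ the values of $|\phi^{(n)}(x)|$ at $x\notin\bigcup_q(\alpha_q n+Kn^{1/m})$ are strictly below $\|\phi^{(n)}\|_\infty$, so the supremum is attained inside the set \eqref{mainstrongsetconclusion}. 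The finitely many small $n$ are absorbed by enlarging $K$.
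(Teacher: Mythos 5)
Your proposal follows essentially the same strategy as the paper: Fourier inversion localized near the finite set $\Omega$, an $O(r^n)$ bound for the remainder, the rescaling $\xi=\xi_q+u/n^{1/m}$, dominated convergence on compact $u$-ranges combined with van der Corput--type estimates on the oscillatory tails (this is precisely the content of Lemmas \ref{easylemma} and \ref{hardlemma2m}), and finally the packet-concentration argument via the decay of $H_m^{\beta_q}$ together with the lower bound of Theorem \ref{mainbound}. One step needs tightening: for a non-principal $\xi_0$ with $m(\xi_0)=2$ and $\Re\beta(\xi_0)=0$, passing through the local approximation by $H_{m(\xi_0)}^{\beta(\xi_0)}$ is not legitimate because that approximation is only uniform on compact sets (the second derivative of the quadratic phase is constant, so your tail estimate does not improve as the splitting radius grows), and such non-principal points are not excluded by hypothesis \eqref{mainstronghypoth}; instead one should bound the modulus of that local integral directly by $O(n^{-1/m(\xi_0)})$ uniformly in $x$, which is the paper's Lemma \ref{bigolem} and is exactly what it is there for.
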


\begin{remark}
If $\phi:\mathbb{Z}\rightarrow\mathbb{C}$ has admissible support and is such that $\sup_\xi|\hat\phi(\xi)|=1$, hypothesis \eqref{mainstronghypoth} is equivalent to the condition that, for every $\xi_0\in(-\pi,\pi]$ for which $|\hat{\phi}(\xi_0)|=1$,
\begin{equation*}
\frac{d^2}{d\xi^2}\log\hat{\phi}(\xi)\Big|_{\xi_0}\neq i\tau
\end{equation*}
for any non-zero real number $\tau$. 
\end{remark}

\noindent As the conclusion \eqref{mainstrongsetconclusion} suggests, the interesting behavior of $\phi^{(n)}$ occurs on the moving sets $\alpha_q n+ Kn^{1/m}$ called \emph{packets}. Each packet drifts with (and expands around) the point $\alpha_q n$ and so we call $\alpha_q$ a \emph{drift constant}.  There is much gained in studying $\phi^{(n)}$ by zooming in on its  packets, i.e., choosing a drift constant $\alpha_q$ from $\{\alpha_1,\alpha_2,\dots,\alpha_R\}$ and studying $\phi^{(n)}(\lfloor\alpha_qn+xn^{1/m}\rfloor)$ where $x$ lives in a compact set (see Subsection \ref{airyex}). In doing this, we arrive at our third main result, a local limit theorem in which only the points $\xi_l\in\{\xi_1,\xi_2,\dots,\xi_R\}$ and corresponding attractors $H_m^{\beta_l}$ appear, provided $\alpha_l=\alpha_q$.  

\begin{theorem}\label{mainweak}
Let $\phi:\mathbb{Z}\rightarrow\mathbb{C}$ have admissible support and be such
that $ \sup_\xi|\hat\phi(\xi)|=1$. Then, referring to the collections $\xi_1,\xi_2,\dots,\xi_R$, $\beta_1,\beta_2,\dots,\beta_R$ and $\alpha_1,\alpha_2,\dots,\alpha_R$ and setting $m=m_\phi$, the following holds: To each $\alpha_q$, there exist subcollections $\xi_{j_1},\xi_{j_2},\dots,\xi_{j_{r(q)}}$ and $\beta_{j_1},\beta_{j_2},\dots,\beta_{j_{r(q)}}$ such that
\begin{equation}\label{mainweakconvergenceconclusion}
 \phi^{(n)}(\lfloor\alpha_q n+xn^{1/m}\rfloor)=\sum_{j=1}^{r(q)}n^{-1/m}e^{-i\lfloor\alpha_q n+xn^{1/m}\rfloor\xi_{j_l}}\hat{\phi}(\xi_{j_l})^m H_m^{\beta_{j_l}}(x)+o(n^{-1/m})
\end{equation}
uniformly for $x$ in any compact set.
\end{theorem}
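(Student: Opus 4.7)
The basic engine is the Fourier inversion formula
\begin{equation*}
\phi^{(n)}(y)=\frac{1}{2\pi}\int_{-\pi}^{\pi}\hat\phi(\xi)^{n}e^{-iy\xi}\,d\xi,
\end{equation*}
applied at $y=\lfloor\alpha_q n+xn^{1/m}\rfloor$. Since $\Omega=\{\xi\in(-\pi,\pi]:|\hat\phi(\xi)|=1\}$ is finite, I partition $(-\pi,\pi]$ into small disjoint neighborhoods $U^{(j)}$ of each $\xi^{(j)}\in\Omega$ on which the Taylor expansion $\log(\hat\phi(\xi^{(j)}+\eta)/\hat\phi(\xi^{(j)}))=i\alpha(\xi^{(j)})\eta-\beta(\xi^{(j)})\eta^{m(\xi^{(j)})}+o(\eta^{m(\xi^{(j)})})$ is uniformly valid, and a complement on which $|\hat\phi|\le\rho<1$. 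The complement contributes $O(\rho^{n})=o(n^{-1/m})$ uniformly in $y$. For each $\xi^{(j)}$ with $m(\xi^{(j)})<m$, rescaling $\eta=un^{-1/m(\xi^{(j)})}$ and bounding the integrand absolutely gives a contribution of size $n^{-1/m(\xi^{(j)})}=o(n^{-1/m})$. Hence only the neighborhoods of $\xi_{1},\dots,\xi_{R}$ (where $m(\xi_{k})=m$) are relevant at the scale $n^{-1/m}$.

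On each $U_{\xi_{k}}$ I substitute $\eta=un^{-1/m}$ and use $\lfloor\alpha_q n+xn^{1/m}\rfloor=\alpha_q n+xn^{1/m}+O(1)$ together with the Taylor expansion to write the local contribution as
\begin{equation*}
\frac{n^{-1/m}}{2\pi}e^{-i\lfloor\alpha_q n+xn^{1/m}\rfloor\xi_{k}}\hat\phi(\xi_{k})^{n}\int_{|u|<\delta n^{1/m}}e^{-i(\alpha_q-\alpha_{k})un^{(m-1)/m}}e^{-ixu}e^{-\beta_{k}u^{m}}\bigl(1+\rho_{n}(u)\bigr)\,du,
\end{equation*}
where $\rho_{n}(u)\to 0$ uniformly on compact $u$-sets. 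When $\alpha_{k}=\alpha_{q}$ the exponential prefactor in the integrand is trivial and the integrand tends locally uniformly to $e^{-ixu-\beta_{k}u^{m}}$; either the dominated convergence theorem (when $\Re(\beta_{k})>0$) or the uniform convergence of the improper integral defining $H_{m}^{\beta_{k}}$ guaranteed by Proposition \ref{Hconverge} (when $\Re(\beta_{k})=0$) shows the integral converges to $2\pi H_{m}^{\beta_{k}}(x)$ uniformly for $x$ in compacta. Declaring the subcollection $\{\xi_{j_{1}},\dots,\xi_{j_{r(q)}}\}:=\{\xi_{k}:\alpha_{k}=\alpha_{q}\}$ reproduces exactly the sum in \eqref{mainweakconvergenceconclusion}.

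It remains to show that the $\xi_{k}$ with $\alpha_{k}\neq\alpha_{q}$ contribute only $o(n^{-1/m})$. After extending the truncation to all of $\mathbb{R}$ at negligible cost, the inner integral above equals $2\pi H_{m}^{\beta_{k}}\bigl(x+(\alpha_q-\alpha_{k})n^{(m-1)/m}\bigr)+o(1)$. The Gaussian estimate \eqref{Hassymptoticnicecase} handles the case $\Re(\beta_{k})>0$, while the oscillatory decay $|H_{m}^{\beta_{k}}(s)|\le A|s|^{-(m-2)/(2(m-1))}+B/|s|$ stated in the introduction handles $\Re(\beta_{k})=0$ with $m>2$; in both situations this gives $o(1)$ as $n\to\infty$. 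The main obstacle, as anticipated by the hypothesis \eqref{mainstronghypoth} of Theorem \ref{mainstrong}, is the borderline case $m=2$ with $\Re(\beta_{k})=0$, where $H_{2}^{\beta_{k}}$ has constant modulus and no decay at infinity. To handle it I would complete the square in the quadratic phase $\beta_{k}u^{2}+i(\alpha_q-\alpha_{k})n^{1/2}u$, which reduces the exceptional contribution to $H_{2}^{\beta_{k}}(x)$ modulated by unimodular factors involving $n$, $n^{1/2}x$, and $x^{2}$; showing that these factors either reorganize into a valid redefinition of the subcollection via the structure of $\Omega$ or cancel against analogous contributions from other indices is the crucial bookkeeping step. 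Uniformity of all error terms in $x$ on compacta is maintained throughout by choosing the neighborhoods $U^{(j)}$ and the quantitative Taylor remainder bounds independent of $x$.
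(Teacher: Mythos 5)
Your outline is structurally sound for most of the configurations: Fourier inversion, localization to small neighborhoods of the points in $\Omega$, the observation that points of order $m(\xi_0)<m$ and the complement contribute $o(n^{-1/m})$, rescaling $\eta=un^{-1/m}$ on the remaining neighborhoods, and identifying the limit with $H_m^{\beta_l}$. But there is a genuine gap exactly where you flag the ``crucial bookkeeping step,'' and your proposed remedy does not close it. The problematic case is $m=2$, $\Re(\beta_l)=0$, and $\alpha_l\neq\alpha_q$. Your intermediate claim that the truncated local integral equals $2\pi H_2^{\beta_l}\bigl(x+(\alpha_q-\alpha_l)n^{1/2}\bigr)+o(1)$ is already unjustified there: Proposition~\ref{Hconverge} gives uniform convergence of the improper integral only for $x$ ranging over a \emph{fixed} compactum when $m=2$ and $\Re\beta=0$, whereas the argument $x+(\alpha_q-\alpha_l)n^{1/2}$ escapes to infinity, so the $o(1)$ cannot be asserted uniformly. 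And even granting that formula, $|H_2^{\beta_l}|$ is a nonzero constant, so completing the square produces unimodular prefactors but no decay; there is no cancellation mechanism between different indices in $\Upsilon_q$ (indeed $\Upsilon_q$ may be a singleton), so the ``reorganize or cancel'' step has no foundation.

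The paper resolves this case (in the proof of Theorem~\ref{weakresult22}) by never converting the rogue neighborhood's contribution into a value of $H_2^{\beta_l}$ at a divergent argument. Instead it bounds the oscillatory integral itself by van der Corput (Lemma~\ref{minbound} via Lemma~\ref{fprimebound}): writing the phase as $f_{n,l}(\xi)=-n[(xn^{-1/2}+\alpha_q-\alpha_l)\xi+\xi^2p_l(\xi)]$, one has $|f_{n,l}'(\xi)|\gtrsim n$ on a suitably shrunk interval precisely because $\alpha_q-\alpha_l\neq 0$, and the real higher-order damping $e^{-\gamma_l n\xi^{k_l}}$ built into $\xi_l\sim(2;2)$ controls the $\|g^n\|_\infty+\|ng'g^{n-1}\|_1$ factor. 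This gives $|\mathcal{I}_l|\lesssim n^{1/2}\cdot n^{-1}=n^{-1/2}$ uniformly for $x$ in a compactum. For the complementary case (when hypothesis~\eqref{mainstronghypoth} holds), the paper instead invokes the already-established uniform-in-$\mathbb{Z}$ expansion of Theorem~\ref{stongestresult} and then kills the $\alpha_l\neq\alpha_q$ terms using the global decay of $H_m^{\beta_l}$ (via \eqref{Hassymptoticnicecase} or Proposition~\ref{Hassymptoticprop}), which is essentially your argument and is correct there. In short: your plan works outside the $m=2$, purely imaginary, off-drift case, but in that case the correct tool is a direct first-derivative oscillatory estimate on the Fourier integral, not decay or algebraic cancellation of the attractor.
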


\noindent We note that Theorem \ref{mainweak} does not require the hypothesis \eqref{mainstronghypoth} of Theorem \ref{mainstrong}. The hypothesis rules out the situation in which $\phi^{(n)}$ is approximated by $H_2^{\beta}$ where $\beta$ is purely imaginary. Correspondingly, the example where $\phi$ is defined by \eqref{ex1def} can be treated by Theorem \ref{mainweak} but not Theorem \ref{mainstrong}.  For the generality gained by eliminating the hypothesis \eqref{mainstronghypoth} we lose the uniformity of the limit \eqref{mainstrongconvergenceconclusion} on all of $\mathbb{Z}$. As we illustrate in Subsection \ref{airyex}, the conclusion \eqref{mainweakconvergenceconclusion} is sometimes more informative anyway. The limits \eqref{mainstrongconvergenceconclusion} and \eqref{mainweakconvergenceconclusion} of Theorems \ref{mainstrong} and \ref{mainweak} both involve a sum of the attractors $H_m^{\beta}$. We remark that these sums are not identically zero and, in fact, each sum is bounded below in absolute 
value by $Cn^{-1/m}$ for some constant $C>0$; this is demonstrated in Section \ref{lowerboundsec} and is used to establish the lower estimate in Theorem \ref{mainbound}.\\

\noindent Everything in this article pertains to a single dimension. In a forthcoming article, we will study the convolution powers $\phi^{(n)}$ where $\phi:\mathbb{Z}^d\rightarrow \mathbb{C}$ is subject to some restrictive assumptions. Let us simply note here that the situation is more complicated in the $\mathbb{Z}^d$ setting. For example, it is not clear what the analogue of Theorem \ref{mainbound} should be. Further, at points $\xi_0\in(-\pi,\pi]^d$ where the Fourier transform satisfies $\sup_{\xi}|\hat\phi(\xi)|=|\hat\phi(\xi_0)|=1$, $|\hat\phi(\xi)|$ can decay at different rates along different directions. This behavior will be seen to affect local limits in which attractors exhibit anisotropic scaling. For instance, by taking a tensor product of admissible functions (in the sense of the present article), one can easily construct $\phi:\mathbb{Z}^2\rightarrow \mathbb{C}$ for which
\begin{equation*}
\phi^{(n)}(x)=n^{-3/4}H_2^{\beta_1}(n^{-1/2}x_1)H_4^{\beta_2}(n^{-1/4}x_2)+o(n^{-3/4})
\end{equation*}
uniformly for $x=(x_1,x_2)\in\mathbb{Z}^2$, where $\Re(\beta_1),\Re(\beta_2)>0$.\\

\noindent The paper is organized as follows: In Section \ref{localphisec}, we study the local
behavior of the Fourier transform of $\phi$. In Section \ref{upperboundsec}, we address some technical lemmas involving oscillatory integrals and prove the estimate $A^{-n}\|\phi^{(n)}\|_\infty\leq C'n^{-1/m}$ of Theorem \ref{mainbound}; this is Theorem \ref{firsthalfmainbound}. Section \ref{attractorsec} concentrates on the attractors $H_m^{\beta}$ where convergence, analyticity and global bounds are addressed. The local limit theorems of Theorems \ref{mainweak} and \ref{mainstrong}
are proven in Section \ref{locallimitsec}. In Section \ref{lowerboundsec}, we complete the proof of Theorem
\ref{mainbound} and in Section \ref{masssec}, the conclusion \eqref{mainstrongsetconclusion} of Theorem \ref{mainstrong}
is proven. Section \ref{exsec} gives examples and addresses a general situation previously treated in \cite{DSC1}.

\section{Local behavior of $\hat{\phi}$}\label{localphisec}

\noindent In this section we study the local behavior of $\hat{\phi}$ at points in $(-\pi,\pi]$
at which the supremum of $|\hat{\phi}|$ is attained. This will be seen to completely
determine the limiting behavior of the convolution powers of $\phi$. We proceed by making
some simple observations about \eqref{fouriertransform} under the assumptions
that $\phi$ has admissible support and $\sup_{\xi}|\hat{\phi}(\xi)|=1.$ Our first observation concerns the number of points at which $|\hat{\phi}(\xi)|=1.$ Because $\phi$ has admissible support, $|\hat{\phi}|^2$ is a non-constant trigonometric polynomial and so $|\hat{\phi}|$ is not constant. From here we observe that $\hat{\phi}$ can only satisfy $|\hat{\phi}(\xi)|=1$ at a finite number of points in $(-\pi,\pi]$; a simple accumulation-point argument shows the necessity of this fact. We now observe that $\hat{\phi}$ is a finite linear combination of exponentials and is
therefore analytic. We use this observation to study the local behavior of
$\hat{\phi}(\xi)$ about any point $\xi_0\in(-\pi,\pi]$ for which $|\hat{\phi}(\xi_0)|=1$. To
this end we consider 
\begin{equation}
\Gamma(\xi)=\log\left(\frac{\hat{\phi}(\xi+\xi_0)}{\hat{\phi}(\xi_0)}\right),
\end{equation}
where $\log$ is taken to be the principle branch of logarithm and we allow the
variable $\xi$ to be complex, for the time being. It follows from our 
remarks above that $\Gamma$ is analytic on an open neighborhood of $0$ and we can
therefore consider its convergent Taylor series
\begin{equation*}
 \Gamma(\xi)=\sum_{l=1}^\infty a_l \xi^l
\end{equation*} 
on this neighborhood. The limiting behavior of
the convolution powers of $\phi$ is characterized by the
first few non-zero terms of this series.\\

\noindent The requirement that $|\hat{\phi}(\xi)|\leq 1$ imposes conditions on
the Taylor expansion for $\Gamma$ as follows: We consider the collection
$\{a_l\}_{l=1}^{\infty}$ of coefficients of the series and let
$k=\min\{l\geq 1:\Re(a_l)\neq0\}$, which exists, for otherwise $|\hat{\phi}|$ would be
constant. We claim that $k$ is even and $\Re(a_k)<0$. To see this we observe that
by only considering real values of $\xi$ we can find a neighborhood of $0$ on
which
\begin{equation*}
e^{C\xi^k}\leq|\hat{\phi}(\xi+\xi_0)|=|\hat{\phi}(\xi_0)e^{\Gamma(\xi)}|
\end{equation*}
and where $C$ is a real constant having the same sign as $\Re(a_k)$. If it is
the case that $\Re(a_k)>0$ or $k$ is odd we have that $|\hat{\phi}(\xi+\xi_0)|>1$
for some $\xi$ which leads to a contradiction. We will summarize the above arguments shortly. First we give the following convenient definition, originally motivated by
Thom\'{e}e \cite{VT2}.

\begin{definition}\label{types}
Let  $\nu:\mathbb{R}\rightarrow\mathbb{C}$ be analytic on a neighborhood of a point $\xi_0$ for which $|\nu(\xi_0)|=1$. Let $\Gamma:\mathcal{O}\subseteq\mathbb{R}\rightarrow \mathbb{C}$ be defined by
\begin{equation}\label{Gammadef}
\Gamma(\xi)=\log\left(\frac{\nu(\xi+\xi_0)}{\nu(\xi_0)}\right),
\end{equation}
where $\mathcal{O}$ is an open neighborhood of $0$ and is such that $\mathcal{O}\ni\xi\mapsto\nu(\xi+\xi_0)$ is non-vanishing.

\noindent $1$. We say that $\xi_0$ is a point of type 1 and of order $m$ for
$\nu$ if the Taylor expansion for \eqref{Gammadef} yields an even integer $m\geq 2$, a real number $\alpha$, and a
complex number $\beta$ with $\Re(\beta)>0$ such that 
\begin{equation}\label{type1}
\Gamma(\xi)=i\alpha \xi -\beta \xi^m+\sum_{l=m+1}^{\infty}a_l\xi^l
\end{equation}
on $\mathcal{O}$. In this case we write $\xi_0\sim(1;m)$.\\

\noindent $2$. We say that $\xi_0$ is a point of type 2 and of order $m$ for
$\nu$ if the Taylor expansion for \eqref{Gammadef} yields $m,k, \alpha, \gamma, p(\xi)$, where $m$ and $k$ are
natural numbers with $k$ even and $1<m<k$; $\alpha$ and $\gamma$ are real
numbers with $\gamma>0$; and $p(\xi)$ is a real polynomial with $p(0)\neq 0$
such that 
\begin{equation}\label{type2}
\Gamma(\xi)=i\alpha \xi -i\xi^m p(\xi)-\gamma \xi^k+\sum_{l=k+1}^{\infty}a_l\xi^l.
\end{equation}
on $\mathcal{O}$. In this case we write $\xi_0\sim(2;m)$ and set $\beta=ip(0)$.\\

\noindent In both cases, the order $m$ refers to the degree of the first non-vanishing term, higher than degree one, in the Taylor expansion for $\Gamma$. The type refers to the complex phase of coefficient of this term: it is of type $1$ if the coefficient has a non-zero real part, otherwise it is of type $2$.   In either case we refer to the constant $\alpha$ as the drift
constant for $\xi_0$.
\end{definition}

\noindent Let us note that the neighborhood $\mathcal{O}$ in the above definition is immaterial; it needs only to be small enough to ensure that $\log$ is defined and analytic there. Using the definition, the arguments which preceded it are summarized in the following proposition.

\begin{proposition}\label{typesprop}
Let $\phi:\mathbb{Z}\rightarrow\mathbb{C}$ have admissible support.
Suppose that the Fourier transform of $\phi$
satisfies $\sup_{\xi}|\hat{\phi}(\xi)|=1$. Then 
\begin{equation*}
\Omega=\{\xi'\in(-\pi,\pi]:|\hat\phi(\xi')|=1\}
\end{equation*}
is finite and, for $\xi_0\in\Omega$, we have either
$\xi_0\sim(1;m)$ or $\xi_0\sim(2;m)$ for some natural number $m=m(\xi_0)\geq 2$.
\end{proposition}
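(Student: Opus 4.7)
\noindent \emph{Finiteness of $\Omega$.} I will first establish finiteness of $\Omega$. Because $\phi$ has admissible support, $\hat\phi$ is a non-constant trigonometric polynomial, and hence $1-|\hat\phi|^2$ is a real-analytic, $2\pi$-periodic function on $\mathbb{R}$ that is not identically zero. By the identity theorem its zero set in the compact interval $(-\pi,\pi]$ has no accumulation point, and is therefore finite.

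\noindent \emph{Local setup at $\xi_0\in\Omega$.} Since $\hat\phi$ is entire (a finite sum of complex exponentials) and $\hat\phi(\xi_0)\ne 0$, I define $\Gamma(\xi)=\log(\hat\phi(\xi+\xi_0)/\hat\phi(\xi_0))$ as an analytic function on a sufficiently small open neighborhood $\mathcal{O}$ of $0$, and expand it in a convergent power series $\Gamma(\xi)=\sum_{l\ge 1}a_l\xi^l$. The bound $|\hat\phi|\le 1$ yields two constraints. First, $\Re\Gamma(\xi)=\log|\hat\phi(\xi+\xi_0)|$ attains its maximum value $0$ at $\xi=0$ on the real line, so differentiation gives $\Re(a_1)=0$ and $a_1=i\alpha$ for some $\alpha\in\mathbb{R}$. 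Second, with $k=\min\{l\ge 1:\Re(a_l)\ne 0\}$ (which exists: otherwise $\Re\Gamma\equiv 0$ near $0$, so $|\hat\phi|\equiv 1$ in a neighborhood and hence globally by analyticity, contradicting admissibility), the expansion $\Re\Gamma(\xi)=\Re(a_k)\xi^k+O(\xi^{k+1})$ combined with $\Re\Gamma\le 0$ on both sides of $0$ forces $k$ to be even and $\Re(a_k)<0$.

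\noindent \emph{Type dichotomy.} Setting $m=\min\{l\ge 2:a_l\ne 0\}$ (existence is similar: otherwise $\Gamma(\xi)=i\alpha\xi$ on $\mathcal{O}$ would force $\hat\phi(\xi)=\hat\phi(\xi_0)e^{i\alpha(\xi-\xi_0)}$ globally by analytic continuation, making the support of $\phi$ a single point), one has $2\le m\le k$. If $m=k$, then $m$ is even and $\Re(a_m)<0$, so with $\beta=-a_m$ the expansion takes the form \eqref{type1} and $\xi_0\sim(1;m)$. If $m<k$, the definition of $k$ forces $a_m,\dots,a_{k-1}$ to be purely imaginary, so $a_l=-ic_{l-m}$ with $c_j\in\mathbb{R}$ and $c_0\ne 0$; writing $a_k=-\gamma+i\delta$ with $\gamma=-\Re(a_k)>0$, I absorb the imaginary component $i\delta\xi^k$ into a real polynomial $p(\xi)=c_0+c_1\xi+\cdots+c_{k-m-1}\xi^{k-m-1}-\delta\xi^{k-m}$, so that the expansion takes the form \eqref{type2} with $p(0)=c_0\ne 0$ and $\xi_0\sim(2;m)$.

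\noindent The main step requiring care is the argument that $k$ is even with $\Re(a_k)<0$, which relies on comparing $\Re\Gamma$ on \emph{both} sides of $0$ to extract the leading-term information; the rest is bookkeeping, with some attention needed in the type $2$ case to distribute the imaginary part of $a_k$ into $p$ while leaving a real negative coefficient on $\xi^k$, matching the exact form required by Definition \ref{types}.
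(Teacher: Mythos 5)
Your proof is correct and follows essentially the same route as the paper: finiteness via the accumulation-point argument applied to the non-constant trigonometric polynomial $|\hat\phi|^2$, and the type classification read off from the Taylor coefficients of $\log\bigl(\hat\phi(\xi+\xi_0)/\hat\phi(\xi_0)\bigr)$ by locating the first coefficient with nonzero real part and showing it has even index and negative real part. You spell out the $m=k$ versus $m<k$ casework, including absorbing $\Im(a_k)$ into the real polynomial $p$, which the paper leaves implicit in the phrasing of Definition~\ref{types}.
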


\begin{convention}\label{constantsconvention}
For any $\phi:\mathbb{Z}\rightarrow\mathbb{C}$ satisfying the hypotheses of Proposition \ref{typesprop}, set
\begin{equation}\label{mdef}
m=\max_{\xi_0\in\Omega}m(\xi_0).
\end{equation}
In view of the proposition, we can write
\begin{equation*}
\Omega=\{\xi_1,\xi_2,\dots,\xi_Q\},
\end{equation*}
where we shall henceforth assume that $\Omega$ is indexed in the following way:
\begin{itemize}
\item  For each $q=1,2,\dots, R$, $\xi_q\sim(1;m_q)$ or $\xi_q\sim (2;m_q)$ with $m_q=m$ and associated constants $\alpha_q$ and $\beta_q$. 
\item  For each $q=R+1,R+2,\dots, Q$, $\xi_q\sim (1;m_q)$ or $\xi_q\sim (2;m_q)$ with $m_q<m$.
\end{itemize}
Hence, to the points $\{\xi_1,\xi_2,\dots,\xi_R\}\subseteq \Omega$ we have the associated collections $\alpha_1,\alpha_2,\dots,\alpha_R$ of real numbers and $\beta_1,\beta_2,\dots,\beta_R$ of non-zero complex numbers with $\Re(\beta_q)\geq 0$ for $q=1,2,\dots,R$.
\end{convention}
\noindent We remark that $\Omega$, $m\,(=m_\phi)$, and the constants $\alpha_q$ and $\beta_q$ for $q=1,2,\dots,R$ of Convention \ref{constantsconvention} are consistent with those of the discussion preceding the statement of Theorem \ref{mainstrong}. Therefore, the constants appearing in Theorems \ref{mainstrong} and \ref{mainweak} are those of Convention \ref{constantsconvention}.

\begin{remark}\label{expectationremark}
There is an alternative way to find the constants $m_q$ $\alpha_q$ and $\beta_q$ above. For any function $f:\mathbb{Z}\rightarrow \mathbb{C}$, define
\begin{equation*}
\mathbb{E}f(X)=\sum_{x\in\mathbb{Z}}f(x)\phi(x),
\end{equation*}
where $X$ is to be understood as a ``random'' variable with distribution $\phi$. For each $\xi_q\in\Omega$, put
\begin{equation*}
a(\xi_q)=\frac{\mathbb{E}Xe^{i\xi_qX}}{\hat\phi(\xi_q)}
\end{equation*}
and, for each natural number $k\geq 2$,
\begin{equation*}
b_k(\xi_q)=\frac{i^k}{k!}\left(a(\xi_q)^k-\frac{\mathbb{E} X^ke^{i\xi_qX}}{\hat\phi(\xi_q)}\right).
\end{equation*}
It is easily shown that $\alpha_q=a(\xi_q)$, $m_q=\min\{k\geq 2:b_k(\xi_q)\neq 0\}$  and $\beta_q=b_{m_q}(\xi_q)$. Proposition 2.4 of \cite{DSC1} gives a class of examples for which $\phi$ is real valued, $\Omega=\{0\}$ and $m=m_1=2l$ for any specified $l\geq 1$. Necessarily, $b_k(0)=0$ for all $k<2l$. 

If we further assume that $\phi\geq 0$ and $\Omega=\{0\}$, it follows that $b_2(0)\neq 0$ and so $m=2$. This situation is equivalent to the case in which $\phi$ is the distribution of a random variable $X$ with state space $\mathbb{Z}$ such that $\mbox{Supp}(\phi)$ is not contained in any proper subgroup of $\mathbb{Z}$. Here $\alpha_1=\mathbb{E} X$ and $2\beta_1=\mathbb{E}X^2-(\mathbb{E}X)^2=\mbox{Var}(X)$.  In this way, the standard local limit theorem is captured by Theorem \ref{mainstrong}. \\ 
\end{remark}

\noindent To exploit the interplay between local approximations of $\hat\phi$ and the Fourier inversion formula, it useful to consider a domain of integration $T$ in which $\Omega$ sits in the interior. To this end, let $\eta\geq 0$ be such that $\Omega\subseteq (-\pi+\eta,\pi+\eta)$ and set $T=(-\pi+\eta,\pi+\eta]$. Of course, for each natural number $n$ and $x\in\mathbb{Z}$, we have
\begin{equation}\label{fouriertransformidentity}
\phi^{(n)}(x)=\frac{1}{2\pi}\int_{T}e^{-ix\xi}\hat\phi(\xi)\,d\xi.
\end{equation}
It is also useful to consider the following extension of $\phi^{n}(x)$: Define $\phi_{\mbox{\scriptsize{e}}}:\mathbb{N}\times\mathbb{R}\rightarrow\mathbb{C}$ by
\begin{equation}\label{phiextdef}
\phi_{\mbox{\scriptsize{e}}}(n,x)=\frac{1}{2\pi}\int_{T}e^{-ix\xi}\hat\phi(\xi)\,d\xi
\end{equation}
for $n\in\mathbb{N}$ and $x\in\mathbb{R}$. We note that $\phi_{\mbox{\scriptsize{e}}}(n,x)=\phi^{(n)}(x)$ for all $n\in\mathbb{N}$ and $x\in\mathbb{Z}$.\\

\noindent The following lemma is seen to govern the limiting behavior of the convolution powers of $\phi$.

\begin{lemma}\label{typeslemma}
Let $\nu:\mathbb{R}\rightarrow\mathbb{C}$ be analytic on a neighborhood of a point $\xi_0$ such that $|\nu(\xi_0)|=1$.

\noindent $1.$ If $\xi_0\sim(1;m)$, then there exist $\delta>0$ and $B,C>0$ such that
\begin{equation}\label{type1approx}
|\Gamma(\xi)-i\alpha\xi+\beta\xi^m|\leq B|\xi|^{m+1}
\end{equation}
and
\begin{equation}\label{type1bound}
|\nu(\xi+\xi_0)|\leq e^{-C\xi^m}
\end{equation}
for all $|\xi|\leq\delta$. Here $\Gamma$, $\alpha$, and $\beta$ are given by Definition \ref{types}.\\

\noindent $2.$ If $\xi_0\sim(2;m)$, there exists $\delta>0$ and $B>0$ such that
\begin{equation}\label{type2approx}
|\Gamma(\xi)-i\alpha\xi+ip(\xi)\xi^m|\leq B\xi^{k}
\end{equation}
for all $|\xi|\leq \delta$. Moreover, there exist $C,D>0$ such that the function
\begin{equation*}
 g(\xi)=\nu(\xi_0)^{-1}\nu
(\xi+\xi_0)\exp(-i\alpha\xi+i\xi^mp(\xi))
\end{equation*}
satisfies
\begin{equation}\label{type2bound}
 |g(\xi)|\leq e^{-C\xi^k}
\end{equation}
and 
\begin{equation}\label{type2derbound}
|g'(\xi)|\leq D|\xi|^{k-1}e^{-C\xi^k}
\end{equation}
for all $|\xi|\leq \delta$. Here $\Gamma, k, \alpha$, and $p(\xi)$ are given by Definition \ref{types}.
\end{lemma}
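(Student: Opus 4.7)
\noindent The plan is to extract both parts directly from the Taylor expansions supplied by Definition \ref{types}, combined throughout with the identity $|e^z|=e^{\Re z}$. Analyticity of $\Gamma$ on a neighborhood of $0$ gives convergent power series whose tails beyond any fixed degree are controlled by a constant times the next power of $|\xi|$ on a small enough disk; all four estimates then fall out of this single principle together with careful bookkeeping of what is real and what is purely imaginary when $\xi$ is real.

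\noindent For Part 1, the estimate \eqref{type1approx} is a Taylor remainder bound: subtracting the first two nonzero terms of \eqref{type1} leaves $\sum_{l\geq m+1}a_l\xi^l$, which is bounded by $B|\xi|^{m+1}$ on some disk $|\xi|\leq \delta_1$. For \eqref{type1bound}, I would write $|\nu(\xi+\xi_0)|=|\nu(\xi_0)|\,e^{\Re\Gamma(\xi)}=e^{\Re\Gamma(\xi)}$ using $|\nu(\xi_0)|=1$. Taking real parts of \eqref{type1} and noting that $\Re(i\alpha\xi)=0$ while $\Re(-\beta\xi^m)=-\Re(\beta)\,\xi^m$ (using $m$ even and $\xi$ real) yields $\Re\Gamma(\xi)\leq -\Re(\beta)\,\xi^m+B|\xi|^{m+1}$. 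Shrinking $\delta$ so that $B|\xi|\leq \Re(\beta)/2$ produces $\Re\Gamma(\xi)\leq -C\xi^m$ with $C=\Re(\beta)/2>0$, which is \eqref{type1bound}; here the parity of $m$ guarantees $\xi^m=|\xi|^m\geq 0$, so the bound is symmetric in the sign of $\xi$.

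\noindent Part 2 is parallel. The estimate \eqref{type2approx} follows from bounding the analytic tail $\Gamma(\xi)-i\alpha\xi+i\xi^m p(\xi)=-\gamma\xi^k+\sum_{l\geq k+1}a_l\xi^l$, using again that $k$ is even so $\xi^k\geq 0$. For the bound on $g$, the crucial observation is that for real $\xi$ both $-i\alpha\xi$ and $i\xi^m p(\xi)$ are purely imaginary, since $\alpha$ and the coefficients of $p$ are real. Therefore
\begin{equation*}
|g(\xi)|=|\nu(\xi_0)|^{-1}|\nu(\xi+\xi_0)|\,\bigl|e^{-i\alpha\xi+i\xi^m p(\xi)}\bigr|=e^{\Re\Gamma(\xi)},
\end{equation*}
and since $\Re\Gamma(\xi)=-\gamma\xi^k+O(|\xi|^{k+1})$, the same absorption argument delivers $|g(\xi)|\leq e^{-C\xi^k}$ with $C=\gamma/2$. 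For \eqref{type2derbound}, I would write $g(\xi)=\exp(h(\xi))$ where $h(\xi):=\Gamma(\xi)-i\alpha\xi+i\xi^m p(\xi)=-\gamma\xi^k+\sum_{l\geq k+1}a_l\xi^l$. Termwise differentiation on the disk of convergence gives $|h'(\xi)|\leq D'|\xi|^{k-1}$ near $0$, and then $|g'(\xi)|=|g(\xi)|\,|h'(\xi)|\leq D|\xi|^{k-1}e^{-C\xi^k}$.

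\noindent I do not anticipate a serious obstacle here; the lemma is essentially bookkeeping once one exploits that $m$ and $k$ are even, $\alpha$ and $p$ are real valued, and that a sufficiently small $\delta$ forces the leading term of $\Re\Gamma$ to dominate its analytic tail. The only subtlety worth flagging is to take the final $\delta$ to be the minimum of the finitely many radii on which the separate tail bounds, absorption inequalities, and termwise differentiation are valid, so that every conclusion holds on a single common disk.
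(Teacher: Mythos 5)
Your proposal is correct and follows essentially the same route as the paper: the identity $\nu(\xi+\xi_0)=\nu(\xi_0)e^{\Gamma(\xi)}$, Taylor-tail bounds for \eqref{type1approx} and \eqref{type2approx}, the relation $|e^z|=e^{\Re z}$ with an absorption argument for \eqref{type1bound} and \eqref{type2bound}, and the chain rule $g'=h'g$ with a termwise tail estimate for \eqref{type2derbound}. You simply spell out the absorption and parity bookkeeping that the paper compresses into ``immediate.''
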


\begin{proof}
By our definitions, we have
\begin{equation*}
\nu(\xi+\xi_0)=\nu(\xi_0)e^{\Gamma(\xi)},
\end{equation*}
where $\Gamma$ is defined by \eqref{Gammadef}.
In the case that $\xi_0\sim(1;m)$, \eqref{type1approx} and \eqref{type1bound} are immediate from \eqref{type1} and the fact that the series $\sum_{l=m+1}a_l\xi^l$ converges uniformly on a neighborhood of $0$.

In the case that $\xi_0\sim(2;m)$, the justification of the estimates \eqref{type2approx} and \eqref{type2bound} follows similarly. For the last conclusion, we observe that
\begin{eqnarray*}
 g'(\xi)&=&\frac{d}{d\xi}\exp(-i\alpha\xi+i\xi^m p(\xi))e^{\Gamma(\xi)}\\
&=&\frac{d}{d\xi}\exp{\left(-\gamma\xi^k+\sum_{l=k+1}^{\infty}a_l\xi^l\right)}\\
&=&\left(-\gamma k\xi^{k-1}+\sum_{l=k+1}^{\infty}a_ll\xi^{l-1}\right)g(\xi)
\end{eqnarray*}
on a neighborhood of $0$. The inequality \eqref{type2derbound} now follows without trouble.
\end{proof}

\section{The upper bound}\label{upperboundsec}

\noindent The goal of this section is to establish the upper bound of Theorem \ref{mainbound}. To this end,
we address a series of technical lemmas involving oscillatory integrals of the
form 
\begin{equation*}
\int_a^b g(\xi)e^{if(\xi)}d\xi
\end{equation*}
which are used throughout the remainder of the paper. Many of the
arguments within are based on the same or slightly less general arguments made
by Greville \cite{TNEG}, Thom\'{e}e \cite{VT2} and, not surprisingly, van der
Corput. 

\begin{lemma}\label{boundbyparts}
 Let $h\in L^1([a,b])$ and $g\in\mathcal{C}^1([a,b])$ be complex valued. Then
for any $M$ such that
\begin{equation*}
\left|\int_a^x h(u)du\right|\leq M
\end{equation*}
for all $x\in[a,b]$ we have
\begin{equation*}
\left|\int_a^b g(u)h(u)du\right|\leq M\left(\|g\|_\infty+\|g'\|_1\right).
\end{equation*}
\end{lemma}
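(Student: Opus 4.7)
The plan is to reduce the estimate to a one-line integration-by-parts identity, with the antiderivative of $h$ playing the role of the slowly varying factor. Concretely, I would introduce $H(x) = \int_a^x h(u)\, du$; by hypothesis, $H$ is absolutely continuous on $[a,b]$, satisfies $H(a) = 0$, and obeys $\|H\|_\infty \leq M$. Since $g \in \mathcal{C}^1([a,b])$, the product $gH$ is absolutely continuous, and the fundamental theorem of calculus gives
\begin{equation*}
\int_a^b g(u) h(u)\, du \;=\; g(b) H(b) \;-\; \int_a^b g'(u) H(u)\, du.
\end{equation*}

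From this identity the conclusion is immediate: the triangle inequality together with the bound $|H(x)| \leq M$ yields $|g(b) H(b)| \leq M \|g\|_\infty$ and $\left|\int_a^b g'(u) H(u)\, du\right| \leq M \|g'\|_1$, and summing these two bounds gives exactly $M(\|g\|_\infty + \|g'\|_1)$.

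There is no substantive obstacle. The only point that calls for a brief verification is the integration-by-parts step itself: since $h$ is only assumed to be in $L^1$, one cannot appeal to the classical $\mathcal{C}^1$ formula directly, but because $H$ is absolutely continuous and $g$ is $\mathcal{C}^1$, the product $gH$ is absolutely continuous with $(gH)' = g'H + gh$ almost everywhere, and the formula above follows from the fundamental theorem of calculus for absolutely continuous functions. This is a standard oscillatory-integral estimate in the style of van der Corput, evidently recorded as a lemma here because later sections will apply it repeatedly with $h(u) = e^{if(u)}$ and $g$ a smooth amplitude, so that the hypothesis $|\int_a^x h|\leq M$ can be supplied by stationary-phase-type bounds on the partial oscillatory integrals.
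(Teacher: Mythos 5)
Your proof is correct and follows essentially the same route as the paper's: set $H(x)=\int_a^x h$, integrate by parts via the fundamental theorem of calculus for absolutely continuous functions, and apply the triangle inequality (indeed, your choice of antiderivative based at $a$ is the right one; the paper's displayed $f(x)=\int_b^x h$ appears to be a typo, since its subsequent estimate uses $f(a)=0$ and $|f(x)|\leq M$, which hold only for the antiderivative you chose). The extra sentence you include justifying integration by parts under the $L^1$ hypothesis is a worthwhile clarification but not a different argument.
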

\begin{proof}
 For $h\in L^1([a,b])$, the function
\begin{equation*}
 f(x)=\int_b^xh(u)du
\end{equation*}
is absolutely continuous and $f'(x)=h(x)$ almost everywhere. Furthermore, our
hypothesis guarantees that $|f(x)|\leq M$ for all $x\in[a,b]$. Integration by
parts yields
\begin{equation*}
\int_a^b g(u)h(u)du=\left[g(u)f(u)\right]_a^b-\int_a^b g'(u)f(u)du
\end{equation*}
and therefore
\begin{eqnarray*}
 \left|\int_a^b g(u)h(u)du\right|&\leq&|f(b)g(b)|+0+\int_a^b|f(u)\|g'(u)|du\\
&\leq& M\|g\|_{\infty}+M\|g'\|_1.
\end{eqnarray*}
\end{proof}

\noindent The following two lemmas, \ref{fprimebound} and \ref{fdoubleprimebound}, are due originally to van der Corput. The proof of Lemma \ref{fprimebound} is a nice application of the second mean value theorem for integrals and can be found in \cite{VT2}. We note that Lemma $2.3$ of \cite{VT2} is stated under slightly stronger hypotheses than Lemma \ref{fprimebound}, however the proof yields our statement exactly. The validity of Lemma \ref{fprimebound} can also be seen using alternating series \cite{TNEG}. For the proof of Lemma \ref{fdoubleprimebound}, we refer the reader to Lemma 3.3 of \cite{VT2}; its proof is relatively simple but involves checking several cases (see also Chapter 1 of \cite{BTW}).

\begin{lemma}\label{fprimebound}
Let $f\in\mathcal{C}^1([a,b])$ be real valued and suppose that $f'$ is a monotonic function such that $f'(x)\neq0$ for all $x\in[a,b]$. 
Then,
\begin{equation}
\left|\int_a^b e^{if(u)}du\right|\leq \frac{4}{\lambda},
\end{equation}
where
\begin{equation}
\lambda=\inf_{x\in [a,b]}|f'(x)|.
\end{equation} 
\end{lemma}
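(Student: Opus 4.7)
The plan is to reduce to an oscillatory integral with a monotonic amplitude and then invoke Bonnet's form of the second mean value theorem. First, because $f'$ is continuous and nonvanishing on $[a,b]$, it has constant sign there; by replacing $f$ with $-f$ if necessary (which conjugates $e^{if(u)}$ and leaves the modulus of its integral unchanged) I may assume $f' > 0$. Then $f$ is a strictly increasing $C^1$-diffeomorphism of $[a,b]$ onto $[f(a),f(b)]$, and the substitution $v = f(u)$ yields
\[
\int_a^b e^{if(u)}\,du \;=\; \int_{f(a)}^{f(b)} G(v)\,e^{iv}\,dv, \qquad G(v) := \frac{1}{f'(f^{-1}(v))}.
\]
Monotonicity of $f'$ together with monotonicity of $f^{-1}$ makes $G$ a positive monotonic function on $[f(a),f(b)]$, and the boundary values of $G$ are $1/f'(a)$ and $1/f'(b)$, both at most $1/\lambda$.

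Next, I would split the integral into real and imaginary parts, $\int G e^{iv}\,dv = \int G\cos v\,dv + i\int G\sin v\,dv$, and apply Bonnet's theorem to each summand. Recall that for $G$ positive and monotonic on $[c,d]$ and $h$ integrable, there exists $\xi \in [c,d]$ such that $\int_c^d G(v) h(v)\,dv$ equals either $G(c^+)\int_c^{\xi} h(v)\,dv$ or $G(d^-)\int_{\xi}^d h(v)\,dv$, according as $G$ is decreasing or increasing. Applied to $h = \cos v$ and $h = \sin v$, the partial integrals are bounded by $2$ in modulus because $\sin$ and $-\cos$ take values in $[-1,1]$, while the surviving boundary value of $G$ is at most $1/\lambda$. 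Hence each of the real and imaginary parts is bounded by $2/\lambda$.

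Combining the two estimates gives
\[
\left|\int_a^b e^{if(u)}\,du\right| \;\leq\; \frac{2}{\lambda} + \frac{2}{\lambda} \;=\; \frac{4}{\lambda},
\]
which is the desired inequality. The main obstacle is that $f$ is only assumed $C^1$, so integration by parts is not directly available (it would introduce a $f''$ term). Bonnet's theorem is the natural substitute because it exploits the monotonicity of $1/f'$ without ever differentiating it. As the author notes, an equivalent route, taken by Greville \cite{TNEG}, recognizes the real and imaginary parts after the change of variables as alternating-sign sums of integrals of $\cos v$ or $\sin v$ over half-periods, weighted by the monotonic factor $G$; monotonicity forces these to form an alternating series whose partial sums telescope to the same $4/\lambda$ bound.
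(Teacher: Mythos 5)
Your proof is correct and follows exactly the route the paper indicates: the paper does not give its own argument but refers to Thom\'{e}e's Lemma~2.3 in \cite{VT2}, describing it as ``a nice application of the second mean value theorem for integrals,'' which is precisely the Bonnet-theorem argument you carry out after the change of variables $v=f(u)$. The alternating-series alternative you mention at the end is likewise the route the paper attributes to Greville \cite{TNEG}.
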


\begin{lemma}\label{fdoubleprimebound}
Let $f\in\mathcal{C}^2([a,b])$ be real valued and suppose that $f''(x)\neq 0$ for all $x\in[a,b]$. Then
\begin{equation*}
\left|\int_a^b e^{if(u)}du\right|\leq \frac{8}{\sqrt{\rho}},
\end{equation*}
where
\begin{equation*}
 \rho=\inf_{x\in[a,b]}|f''(x)|.
\end{equation*}
\end{lemma}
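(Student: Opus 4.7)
The plan is to reduce the second-derivative hypothesis to the first-derivative setting of Lemma \ref{fprimebound} by splitting $[a,b]$ into a small neighborhood of the (possible) critical point of $f$ and two outer pieces on which $|f'|$ is genuinely large. Since $f''$ is continuous and nowhere zero on $[a,b]$, it is of constant sign, so $f'$ is strictly monotonic and has at most one zero $c$ in $[a,b]$. If $f'$ has no zero, I will formally set $c$ to be whichever endpoint of $[a,b]$ gives $|f'|$ its minimum; the argument goes through without change.

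First I would fix a parameter $\delta>0$ (to be optimized at the end) and consider $I_0=[c-\delta,c+\delta]\cap[a,b]$, together with the two (possibly empty) complementary intervals $I_-=[a,c-\delta]$ and $I_+=[c+\delta,b]$. On $I_0$ I use the trivial bound
\begin{equation*}
\left|\int_{I_0}e^{if(u)}\,du\right|\leq |I_0|\leq 2\delta,
\end{equation*}
which ignores the oscillation but is efficient because $I_0$ is short.

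Next, on each of $I_-$ and $I_+$ the derivative $f'$ is monotonic and nonzero, so Lemma \ref{fprimebound} applies. The key quantitative input comes from $\rho=\inf|f''|$: by the mean value theorem, for $x\in I_+$ we have $|f'(x)|\geq |f'(x)-f'(c)|\geq \rho(x-c)\geq\rho\delta$, and similarly on $I_-$. Hence on each outer interval Lemma \ref{fprimebound} gives an integral bound of $4/(\rho\delta)$, and adding the two contributions yields $8/(\rho\delta)$. Combining with the $I_0$ estimate,
\begin{equation*}
\left|\int_a^b e^{if(u)}\,du\right|\leq 2\delta+\frac{8}{\rho\delta}.
\end{equation*}

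Finally, I optimize over $\delta>0$. The right-hand side is minimized when $2=8/(\rho\delta^2)$, i.e.\ $\delta=2/\sqrt{\rho}$, and substituting back gives exactly $4/\sqrt{\rho}+4/\sqrt{\rho}=8/\sqrt{\rho}$, as claimed. The only subtle point is handling the case when $f'$ does not vanish on $[a,b]$ or when $c\pm\delta$ falls outside $[a,b]$, but in every such case one or two of the pieces $I_-,I_0,I_+$ is empty and the same bound survives. I do not anticipate a serious obstacle; the main care is just making sure the splitting and the monotonicity of $f'$ interact correctly so that Lemma \ref{fprimebound} is legitimately applicable on $I_\pm$.
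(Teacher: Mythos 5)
Your argument is correct and is the standard van der Corput reduction: isolate a $\delta$-window around the (possible) zero of $f'$, bound its contribution trivially by $2\delta$, control the two outer pieces via Lemma \ref{fprimebound} after using the mean value theorem to get $|f'|\geq\rho\delta$ there, and then optimize to obtain $2\delta+8/(\rho\delta)\big|_{\delta=2/\sqrt{\rho}}=8/\sqrt{\rho}$. The paper does not reproduce a proof but points to Thom\'{e}e's Lemma 3.3, which is exactly this case-splitting argument; your convention of sending $c$ to an endpoint when $f'$ does not vanish merely streamlines the bookkeeping of cases alluded to in the paper.
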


\begin{lemma}\label{minbound}
Let $g\in\mathcal{C}^1([a,b])$ be
complex valued and let $f\in \mathcal{C}^2([a,b])$ be real valued and such that $f''(x)\neq 0$ for all $x\in[a,b]$. Then
\begin{equation*}
\left|\int_a^bg(u)e^{if(u)}du\right|\leq
\min\left\{\frac{4}{\lambda},\frac{8}{\sqrt{\rho}}\right\}\left(\|g\|_{\infty}
+\|g'\|_1\right),
\end{equation*}
where $\lambda=\inf_{x\in[a,b]}|f'(x)|$ and $\rho=\inf_{x\in[a,b]}|f''(x)|$.
\end{lemma}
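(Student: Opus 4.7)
The plan is to combine Lemma \ref{boundbyparts} with the two van der Corput-type bounds, Lemmas \ref{fprimebound} and \ref{fdoubleprimebound}. I would apply Lemma \ref{boundbyparts} with $h(u) = e^{if(u)}$, which is continuous on $[a,b]$ and hence in $L^1([a,b])$. The lemma then reduces the problem to exhibiting a single constant $M$ for which
\[
\left|\int_a^x e^{if(u)}\,du\right| \leq M \qquad \text{for every } x \in [a,b],
\]
since once such an $M$ is found, Lemma \ref{boundbyparts} will immediately deliver the bound $M(\|g\|_\infty + \|g'\|_1)$ on the full integral.

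To verify that $M = \min\{4/\lambda,\, 8/\sqrt{\rho}\}$ works, I would apply the two van der Corput bounds not on $[a,b]$ itself but on each sub-interval $[a,x]$. First, because $f \in \mathcal{C}^2$ and $f''$ never vanishes on $[a,b]$, the intermediate value theorem forces $f''$ to have constant sign, so $f'$ is strictly monotonic on $[a,b]$ and thus on every $[a,x]$. When $\lambda > 0$, Lemma \ref{fprimebound} applied on $[a,x]$ gives $|\int_a^x e^{if(u)}\,du| \leq 4/\inf_{[a,x]}|f'| \leq 4/\lambda$. Simultaneously, Lemma \ref{fdoubleprimebound} applied on $[a,x]$ gives $|\int_a^x e^{if(u)}\,du| \leq 8/\sqrt{\inf_{[a,x]}|f''|} \leq 8/\sqrt{\rho}$, with no hypothesis on $\lambda$ needed. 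Hence the minimum of the two bounds is a valid choice of $M$; in the degenerate case $\lambda = 0$ we read $4/\lambda = +\infty$ so that only the second bound is active.

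Feeding this $M$ back into Lemma \ref{boundbyparts} yields exactly the claimed estimate. The argument is essentially a three-line synthesis of the preceding lemmas, so I do not anticipate a substantive obstacle; the only point requiring a moment's care is confirming that the hypothesis $f'' \neq 0$ on $[a,b]$ transports unchanged to each sub-interval $[a,x]$, guaranteeing both the monotonicity of $f'$ needed by Lemma \ref{fprimebound} and the uniform control of the infima appearing in both van der Corput bounds.
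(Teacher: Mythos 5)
Your proof is correct and takes essentially the same route as the paper: apply Lemmas \ref{fprimebound} and \ref{fdoubleprimebound} on each subinterval $[a,x]$ to obtain the uniform bound $M=\min\{4/\lambda,\,8/\sqrt{\rho}\}$ on $\left|\int_a^x e^{if(u)}\,du\right|$, then feed $h=e^{if}$ into Lemma \ref{boundbyparts}. The extra care you take in noting that $f''\neq 0$ forces $f'$ to be strictly monotonic (so Lemma \ref{fprimebound} applies when $\lambda>0$), and in handling the degenerate case $\lambda=0$, matches the paper's remark that the $4/\lambda$ term only contributes when $f'$ is nonvanishing.
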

\begin{proof}
Combining the results of Lemmas \ref{fprimebound} and \ref{fdoubleprimebound} show
\begin{equation*}
\left|\int_a^x e^{if(u)}du\right|\leq \min\left\{\frac{4}{\lambda},\frac{8}{\sqrt{\rho}}\right\}
\end{equation*} for any $x\in[a,b]$. We remark that $4/\lambda$ only
contributes to the upper bound provided $f'$ is never zero, in which case the
application of Lemma \ref{fprimebound} is justified. Setting $h(u)=e^{if(u)}$ we
note that the functions $g$ and $h$ are the subject of Lemma \ref{boundbyparts}.
The result now follows immediately from Lemma \ref{boundbyparts}.
\end{proof}

\begin{lemma}\label{bigolem}
Let $\nu:\mathbb{R}\rightarrow\mathbb{C}$ be analytic on a neighborhood of $\xi_0$ where $|\nu(\xi_0)|=1$. If $\xi_0$ is a point of order $m\geq 2$ for $\nu$, then there is
$\delta>0$ such that
\begin{equation*}
\frac{1}{2\pi}\int_{|\xi-\xi_0|\leq \delta}\nu(\xi)^n
e^{-ix\xi}d\xi=O(n^{-1/m}),
\end{equation*}
where the limit is uniform in $x\in \mathbb{R}$.
\end{lemma}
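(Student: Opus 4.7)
The plan is to substitute $\eta=\xi-\xi_0$ and pull out the modulus-one factor $\nu(\xi_0)^n e^{-ix\xi_0}$, reducing the claim to a uniform bound of the form $O(n^{-1/m})$ on
\begin{equation*}
I_n(x)=\int_{|\eta|\leq \delta}\nu(\xi_0+\eta)^n e^{-ix\eta}\,d\eta,
\end{equation*}
where $\delta>0$ is taken small enough so that Lemma \ref{typeslemma} applies. The argument then splits into two cases depending on whether $\xi_0\sim(1;m)$ or $\xi_0\sim(2;m)$, exactly as in Definition \ref{types}.

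In the type~$1$ case, the bound \eqref{type1bound} of Lemma \ref{typeslemma} gives $|\nu(\xi_0+\eta)|\leq e^{-C\eta^m}$ for $|\eta|\leq\delta$. Since $m$ is even here, the integrand in $|I_n(x)|$ is dominated pointwise by $e^{-nC\eta^m}$, which is independent of $x$. A direct substitution $u=n^{1/m}\eta$ then yields
\begin{equation*}
|I_n(x)|\leq \int_{|\eta|\leq \delta}e^{-nC\eta^m}\,d\eta\leq n^{-1/m}\int_{\mathbb{R}}e^{-Cu^m}\,du=O(n^{-1/m}),
\end{equation*}
uniformly in $x$, which is the desired bound.

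The type~$2$ case is more delicate because the pointwise modulus bound \eqref{type2bound} only decays like $e^{-nC\eta^k}$ with $k>m$, which by itself would only yield $O(n^{-1/k})$. To recover the correct exponent $1/m$ one must exploit oscillation. Using the factorization from Lemma \ref{typeslemma}, write $\nu(\xi_0+\eta)^n=\nu(\xi_0)^n e^{i f(\eta)}\,g(\eta)^n$ where $f(\eta)=n\alpha\eta-n\eta^m p(\eta)$ is real and $g$ satisfies $|g|\leq e^{-C\eta^k}$ and $|g'|\leq D|\eta|^{k-1}e^{-C\eta^k}$. Thus $I_n(x)$ (times a unimodular factor) becomes
\begin{equation*}
\int_{|\eta|\leq\delta} g(\eta)^n e^{i(f(\eta)-x\eta)}\,d\eta,
\end{equation*}
so the plan is to apply Lemma \ref{minbound} with amplitude $G(\eta)=g(\eta)^n$ and phase $f(\eta)-x\eta$. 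A direct computation shows $\|G\|_\infty\leq 1$ and the substitution $u=n^{1/k}\eta$ gives $\|G'\|_1=O(1)$ uniformly in $n$, since $|G'(\eta)|\leq nD|\eta|^{k-1}e^{-nC\eta^k}$. For the phase, $(f(\eta)-x\eta)''=-nm(m-1)p(0)\eta^{m-2}+O(n|\eta|^{m-1})$, so shrinking $\delta$ we have $|(f-x\eta)''(\eta)|\geq c\,n|\eta|^{m-2}$ on $[-\delta,\delta]\setminus\{0\}$, independently of $x$.

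The hard part, and the reason the $x$-dependence enters non-trivially, is that the first derivative $f'(\eta)-x$ can vanish inside the domain for certain $x$, so one cannot simply invoke Lemma \ref{fprimebound}. The remedy, when $m\geq 3$, is a split at scale $\tau=n^{-1/m}$: on $|\eta|\leq \tau$ one uses the trivial bound of length $2\tau=O(n^{-1/m})$ times $\|G\|_\infty\leq 1$, while on $\tau\leq |\eta|\leq \delta$ (and similarly on the negative side) the lower bound $|(f-x\eta)''|\geq cn\tau^{m-2}=c\,n^{2/m}$ together with Lemma \ref{minbound} yields a bound of order $(\|G\|_\infty+\|G'\|_1)/\sqrt{n^{2/m}}=O(n^{-1/m})$, uniformly in $x$. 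When $m=2$, no splitting is needed, as $|(f-x\eta)''|\geq cn$ already on $[-\delta,\delta]$ and a single application of Lemma \ref{minbound} gives $O(n^{-1/2})$. Assembling the pieces in both cases yields $I_n(x)=O(n^{-1/m})$ uniformly in $x\in\mathbb{R}$, as required.
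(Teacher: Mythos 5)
Your proposal is correct and follows essentially the same route as the paper: both cases use the bounds from Lemma \ref{typeslemma}, and in the type-2 case you split at the scale $n^{-1/m}$, use the trivial bound on the middle piece, and apply the van der Corput / Lemma \ref{minbound} second-derivative estimate on the outer pieces with $\|g^n\|_\infty\leq 1$ and $\|n g' g^{n-1}\|_1=O(1)$. The only (minor and correct) deviation is that you observe no splitting is needed when $m=2$, whereas the paper applies the same split uniformly for all $m\geq 2$.
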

\begin{proof}
 Let us first assume that $\xi_0\sim(1;m)$. Our hypothesis guarantees that $m$ is
even and by Lemma \ref{typeslemma} there are constants $C>0$ and $\delta>0$ such that
\begin{equation*}
|\nu(\xi+\xi_0)|\leq e^{-C\xi^m}
\end{equation*}
for all $-\delta\leq\xi\leq \delta$. Therefore
\begin{eqnarray*}
\left| \frac{1}{2\pi}\int_{|\xi-\xi_0|\leq \delta}\nu(\xi)^n
e^{-ix\xi}d\xi\right|&\leq& \int_{-\delta}^\delta |\nu(\xi+\xi_0)|^nd\xi\\
&\leq&\int_{\mathbb{R}}e^{-nC\xi^m}d\xi\\
&\leq& \frac{M}{n^{1/m}}.
\end{eqnarray*}

In the second case we assume that $\xi_0\sim(2;m)$. We set
\begin{equation*}
g(\xi)=\left[\nu(\xi_0)^{-1}\nu(\xi+\xi_0)\exp(-i\alpha\xi+i\xi^mp(\xi))\right
]
\end{equation*}

and
\begin{equation*}
 f_n(\xi,x)=(n\alpha-x) \xi-n\xi^mp(\xi).
\end{equation*}
We note that $f_n$ is real valued. Appealing to Lemma \ref{typeslemma}, let $\delta>0$ be chosen so that the estimates \eqref{type2bound} and \eqref{type2derbound} hold for all $\xi\in[-\delta,\delta]$. Upon changing variables of integration and using the fact that $|\nu(\xi_0)|=1$, we write
\begin{equation*}
 \left|\frac{1}{2\pi}\int_{|\xi-\xi_0|\leq \delta}\nu(\xi)^n
e^{-ix\xi}d\xi\right|\leq\sum_{j=1}^3\left|\int_{I_j}g(\xi)^ne^{if_n(\xi,x)}d\xi\right|,
\end{equation*}
where $I_1=[-\delta,-n^{1/m}]$, $I_2=(-n^{1/m},n^{1/m})$ and $I_3=[n^{1/m},\delta]$. On the interval $I_2$, $|g(\xi)|\leq 1$ by \eqref{type2bound} and therefore
\begin{equation*}
 \left|\int_{I_2}g(\xi)^ne^{if_n(\xi,x)}d\xi\right|\leq \frac{2}{n^{1/m}}.
\end{equation*}

We now consider the integral over $I_1$ to which we will apply Lemma \ref{minbound}. First observe that the regularity requirements of Lemma \ref{minbound} for $f_n$ and $g^n$ are met. Differentiating $f_n$ twice with respect to $\xi$ gives
\begin{equation*}
 \partial_{\xi}^2 f_n(\xi,x)=-n\frac{d^2}{d\xi^2}\xi^mp(\xi),
\end{equation*}
which is independent of $x$. Using the fact that $\xi^mp(\xi)$ is a polynomial with
$m$ being the smallest power of its terms, we may further restrict $\delta>0$ so
that
\begin{equation}\label{polylowerbound}
 C^2|\xi|^{m-2}\leq \left|\frac{d^2}{d\xi^2}\xi^mp(\xi)\right|
\end{equation}
for some $C>0$ and for all $\xi\in[-\delta,\delta]$. Consequently $|\partial_{\xi}^2 f_n(\xi,x)|> 0$ for all $\xi\in I_1$ and $x\in\mathbb{R}$. Appealing to Lemma \ref{fdoubleprimebound} we have
\begin{equation}\label{bigoint1}
 \left|\int_{I_1}g(\xi)^ne^{if_n(\xi,x)}d\xi\right|\leq\frac{8}{\sqrt{\lambda}}(\|g^n\|_{\infty}+\|ng'g^{n-1}\|_1),
\end{equation}
where $\lambda=\inf_{\xi\in I_1}|\partial_{\xi}^2 f_n(\xi,x)|$. Using \eqref{polylowerbound} and recalling that $m\geq 2$ we observe that
\begin{equation*}
Cn^{1/m}=\sqrt{C^2n|-n^{-1/m}|^{m-2}}\leq\sqrt{\inf_{I_1}
C^2n|\xi|^{m-2}}\leq \sqrt{\lambda}.
\end{equation*}
Now by \eqref{type2bound} and \eqref{type2derbound} of Lemma \ref{typeslemma} we have $\|g^n\|_{\infty}\leq 1$ and
\begin{eqnarray*}
 \|ng'g^{n-1}\|_1&=&n\int_{I_1}|g'(\xi)g(\xi)^{n-1}|d\xi\\
&\leq& n\int_{I_1}D|\xi|^{k-1}e^{-nC\xi^k}d\xi\\
&\leq&\int_{\mathbb{R}}D|u|^{k-1}e^{-Cu^k}du=M<\infty.
\end{eqnarray*}
Inserting the above estimates into \eqref{bigoint1} gives
\begin{equation*}
 \left|\int_{I_1}g(\xi)^ne^{if_n(\xi,x)}d\xi\right|\leq\frac{8(1+M)}{Cn^{1/m}}=\frac{K_1}{n^{1/m}}.
\end{equation*}
A similar calculation shows that 
\begin{equation*}
\left|\int_{I_3}g_n(\xi)e^{if_n(\xi,x)}d\xi\right|\leq\frac{K_2}{n^{1/m}}
\end{equation*}
for some $K_2>0$. Putting these estimates together gives
\begin{equation*}
  \left|\frac{1}{2\pi}\int_{|\xi-\xi_0|\leq \delta}\nu(\xi)^n
e^{-ix\xi}d\xi\right|\leq
\frac{K_1}{n^{1/m}}+\frac{2}{n^{1/m}}+\frac{K_2}{n^{1/m}},
\end{equation*}
our desired inequality.
\end{proof}

\begin{theorem}\label{firsthalfmainbound}
Let $\phi:\mathbb{Z}\rightarrow \mathbb{C}$ have admissible support and let $A=\sup_\xi|\hat{\phi}(\xi)|$. Then there is a
natural number $m\geq 2$ and a real number $C'>0$ such that 
\begin{equation}\label{upperboundsup}
 A^{-n}\|\phi^{(n)}\|_{\infty}\leq C' n^{-1/m}
\end{equation}
for all natural numbers $n$.
\end{theorem}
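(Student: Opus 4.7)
The plan is to reduce to the normalized case $A=1$ and then exploit the Fourier inversion formula \eqref{fouriertransformidentity}, partitioning the domain of integration according to the finite set $\Omega$ provided by Proposition \ref{typesprop}.

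First I would set $\psi = A^{-1}\phi$. Since convolution is bilinear, $\psi^{(n)} = A^{-n}\phi^{(n)}$, and $\sup_\xi|\hat\psi(\xi)|=1$. So it suffices to prove $\|\psi^{(n)}\|_\infty \leq C'n^{-1/m}$. Apply Proposition \ref{typesprop} to $\psi$ to obtain the finite set $\Omega=\{\xi_1,\dots,\xi_Q\}\subseteq(-\pi,\pi]$, with each $\xi_q$ a point of order $m_q\geq 2$ (of type $1$ or type $2$) for $\hat\psi$, and set $m=\max_q m_q$ as in Convention \ref{constantsconvention}. Choose $\eta\geq 0$ so that $\Omega\subseteq(-\pi+\eta,\pi+\eta)$ and work on $T=(-\pi+\eta,\pi+\eta]$, which is a valid fundamental domain by $2\pi$-periodicity of $\hat\psi$.

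Next, for each $q$, apply Lemma \ref{bigolem} to $\nu=\hat\psi$ at $\xi_q$ to obtain a $\delta_q>0$ such that
\begin{equation*}
\left|\frac{1}{2\pi}\int_{|\xi-\xi_q|\leq\delta_q}\hat\psi(\xi)^n e^{-ix\xi}\,d\xi\right|= O(n^{-1/m_q})
\end{equation*}
uniformly in $x\in\mathbb{R}$. Since $m_q\leq m$, this is also $O(n^{-1/m})$ uniformly in $x$. By shrinking the $\delta_q$ if necessary, I can arrange that the intervals $I_q=[\xi_q-\delta_q,\xi_q+\delta_q]$ are pairwise disjoint and contained in the interior of $T$. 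The remaining set $E = T\setminus\bigcup_q I_q$ is compact and disjoint from $\Omega$, so by continuity of $|\hat\psi|$ and the definition of $\Omega$ there exists $\eta_0\in(0,1)$ with $|\hat\psi(\xi)|\leq\eta_0$ for all $\xi\in E$. Therefore
\begin{equation*}
\left|\frac{1}{2\pi}\int_E \hat\psi(\xi)^n e^{-ix\xi}\,d\xi\right|\leq \frac{|E|}{2\pi}\,\eta_0^n,
\end{equation*}
uniformly in $x$, and this is $o(n^{-1/m})$ as $n\to\infty$.

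Combining these pieces through \eqref{fouriertransformidentity} yields
\begin{equation*}
|\psi^{(n)}(x)|\leq \sum_{q=1}^Q\left|\frac{1}{2\pi}\int_{I_q}\hat\psi(\xi)^n e^{-ix\xi}\,d\xi\right|+\left|\frac{1}{2\pi}\int_E \hat\psi(\xi)^n e^{-ix\xi}\,d\xi\right|\leq C' n^{-1/m}
\end{equation*}
for all $n$ and all $x\in\mathbb{Z}$, which gives the claimed bound after multiplying back by $A^n$. No step is particularly difficult because all the heavy lifting has been done in Lemma \ref{bigolem}; the only mild nuisance is ensuring the $\delta_q$ can be chosen simultaneously small and that the complementary integral is controlled, but both follow from compactness and the finiteness of $\Omega$.
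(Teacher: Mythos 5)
Your proposal is correct and follows essentially the same route as the paper: normalize to $A=1$, apply Proposition \ref{typesprop} and Lemma \ref{bigolem} on small intervals $I_q$ around the points of $\Omega$, and bound the remaining integral by $s^n$ with $s<1$. (One small inaccuracy: $E=T\setminus\bigcup_q I_q$ is not compact since $T$ is half-open, but by $2\pi$-periodicity of $\hat\psi$ the supremum of $|\hat\psi|$ over $E$ is still attained and strictly less than $1$, so the argument goes through unchanged.)
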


\begin{proof}
It suffices to prove the theorem in the case that $A=\sup_{\xi}|\hat{\phi}(\xi)|=1$, for otherwise one simply
multiplies the Fourier inversion formula by $A^{-n}$. In view of Proposition \ref{typesprop}, we adopt Convention \ref{constantsconvention}. For each $\xi_q\in\Omega$ with associated $2\leq m_q\leq m$, select $\delta_q>0$ for which the conclusion of Lemma \ref{bigolem} holds and small enough to ensure that the intervals $I_q:=[\xi_q-\delta_q,\xi_q+\delta_p]\subseteq T$ for $i=1,2,\dots,Q$ are disjoint. Set $J=T\setminus \cup_q I_q$ and $s=\sup_{\xi\in J}|\hat{\phi}(\xi)|<1$. Using \eqref{fouriertransformidentity}, we write
\begin{eqnarray*}
|\phi^{(n)}(x)|&=& \left|\sum_{q=1}^Q \frac{1}{2\pi}\int_{I_q} \hat{\phi}(\xi)^n e^{-ix\xi}d\xi+ \frac{1}{2\pi}\int_J \hat{\phi}(\xi)^n e^{-ix\xi}d\xi\right|\\
&\leq&\sum_{q=1}^Q \left|\frac{1}{2\pi}\int_{I_q} \hat{\phi}(\xi)^n e^{-ix\xi}d\xi\right|+ \frac{1}{2\pi}\int_J | \hat{\phi}(\xi)|^n d\xi\\
&\leq&\sum_{q=1}^Q \left|\frac{1}{2\pi}\int_{I_q} \hat{\phi}(\xi)^n e^{-ix\xi}d\xi\right|+ s^n.\\
\end{eqnarray*}
Using Lemma \ref{bigolem} we conclude that for every $x\in\mathbb{R}$
\begin{eqnarray*}
|\phi^{(n)}(x)|&\leq&\sum_{q=1}^Q \frac{K_q}{n^{1/m_q}}+s^n\\
&\leq& \frac{K}{n^{1/m}}+s^n
\end{eqnarray*} 
from which the result follows.
\end{proof}

\section{The attractors $H_m^{\beta}$}\label{attractorsec}

In this section we study the functions $H_m^{\beta}$ defined by \eqref{Hdef}. Our first task is to show that the integral defining $H_m^{\beta}$ converges in the senses indicated in the introduction.

\begin{proposition}\label{Hconverge} Let $m\geq 2$ be a natural number and let $\beta$ be a non-zero complex number such that $\Re(\beta)\geq 0$.
\begin{enumerate}
\item If $m$ is even and $\Re{\beta}>0$ then the integral defining $H_m^{\beta}(x)$ in \eqref{Hdef} converges absolutely and uniformly in $x$ on $\mathbb{R}$ as an improper Riemann integral.
\item If $m>2$ and $\Re(\beta)=0$ then the integral defining $H_m^{\beta}(x)$ converges uniformly in $x$ on $\mathbb{R}$ as an improper Riemann integral.
\item If $m=2$ and $\Re(\beta)=0$ then for any compact set $K\subseteq\mathbb{R}$, the integral defining $H_m^{\beta}(x)$ converges uniformly in $x$ on $K$ as an improper Riemann integral.
\end{enumerate}
\end{proposition}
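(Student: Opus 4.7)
The plan is to handle the three cases separately, corresponding to very different behaviors of the integrand $u\mapsto e^{-ixu-\beta u^m}$. Case (1) is immediate: when $m$ is even and $\Re(\beta)>0$, the integrand is dominated, pointwise in $x$, by $e^{-\Re(\beta)u^m}\in L^1(\mathbb{R})$, so the Weierstrass $M$-test delivers absolute and uniform convergence on all of $\mathbb{R}$.

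Cases (2) and (3) are genuinely oscillatory: writing $\beta=i\tau$ with $\tau\in\mathbb{R}\setminus\{0\}$, we have $|e^{-\beta u^m}|=1$ and the integral becomes a pure oscillatory integral $\int e^{if(u)}\,du$ with phase $f(u)=-xu-\tau u^m$. The strategy will be to show that the tail pieces $\int_A^B e^{if(u)}\,du$ tend to zero, uniformly in $x$ in the appropriate sense, as $A\to-\infty$ and $B\to+\infty$; this yields convergence of the improper Riemann integral in the required uniform sense. Van der Corput's lemmas \ref{fprimebound} and \ref{fdoubleprimebound} are the natural tools. By symmetry I will only discuss the positive tail $[A,B]$ with $A>0$.

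For case (2), where $m>2$, I would apply Lemma \ref{fdoubleprimebound} because $f''(u)=-m(m-1)\tau u^{m-2}$ is independent of $x$ and satisfies $|f''(u)|\geq m(m-1)|\tau|A^{m-2}$ on $[A,B]$. Lemma \ref{fdoubleprimebound} then yields the tail estimate
\begin{equation*}
\left|\int_A^B e^{if(u)}\,du\right|\leq\frac{8}{\sqrt{m(m-1)|\tau|}}\,A^{-(m-2)/2},
\end{equation*}
which, because $m-2>0$, vanishes as $A\to\infty$ and is uniform in $x\in\mathbb{R}$. Combined with the symmetric estimate on the negative tail, this gives uniform convergence on all of $\mathbb{R}$.

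For case (3), where $m=2$, this approach fails outright: $f''\equiv-2\tau$ is constant, so Lemma \ref{fdoubleprimebound} only produces a fixed bound on tails. Instead I would use Lemma \ref{fprimebound} applied to $f'(u)=-x-2\tau u$, which is linear, hence monotonic, with its unique zero at the stationary point $u=-x/(2\tau)$. Restricting $x$ to a compact set $K\subseteq\mathbb{R}$ and setting $R=\max_{x\in K}|x|$, for any $A>R/(2|\tau|)$ we have $|f'(u)|\geq 2|\tau|A-R>0$ on $[A,B]$, so Lemma \ref{fprimebound} yields a tail bound of order $(2|\tau|A-R)^{-1}\to 0$ uniformly in $x\in K$. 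The stationary point is the main obstacle of the whole proposition: its position depends linearly on $x$, so no estimate of this type can be made uniform over all of $\mathbb{R}$, which is precisely why case (3) is restricted to uniform convergence on compact sets. Case (2) escapes this trap because the growth of $|f''|$ dominates the location of any stationary point of $f'$, allowing the tail bound to be independent of $x$.
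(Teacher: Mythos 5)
Your proposal is correct and follows essentially the same route as the paper: $L^1$-domination for item (1), van der Corput's second-derivative estimate (Lemma \ref{fdoubleprimebound}) for the tails when $m>2$ since $|f''|$ grows and is independent of $x$, and van der Corput's first-derivative estimate (Lemma \ref{fprimebound}) for $m=2$ with $x$ restricted to a compact set, together with the Cauchy criterion for uniform convergence of the improper integral. Your observation about the $x$-dependent stationary point being the obstruction to uniformity over all of $\mathbb{R}$ in item (3) matches the paper's remark that the second-derivative argument fails because $\partial_u^2 f$ is a nonzero constant in that case.
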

\begin{proof}

For item $1$ there is nothing to prove, the result follows from the classical theory of Fourier transforms. For items $\mathit{2}$ and $\mathit{3}$, let $\tau$ be the non-zero real number such that $\beta=i\tau$ and set $f(u,x)=-xu-\tau u^m$. Our job is to show that the integral
\begin{equation*}
 \int_{\mathbb{R}}e^{if(x,u)}du
\end{equation*}
converges in the senses indicated for $m>2$ and $m=2$ respectively. 

We first consider the case where $m>2$. Let $\epsilon>0$ and choose $M$ sufficiently large so that
\begin{equation}\label{Mepsilonrelation}
 \frac{8}{\sqrt{|\tau|m(m-1)M^{m-2}}}\leq\epsilon.
\end{equation}
Observe that for any real numbers $a$ and $b$ such that $M< a\leq b$ or $a\leq b< -M$,
\begin{equation*}
|\tau|m(m-1)M^{m-2}< \inf_{u\in[a,b]}|\partial_{u}^2 f(x,u)|=:\lambda.
\end{equation*} We now apply Lemma \ref{fdoubleprimebound} and conclude that
\begin{equation*}
\left|\int_a^be^{-ixu-\beta u^m}du\right|=\left|\int_a^be^{if(x,u)}du\right|\leq\frac{8}{\sqrt{\lambda}}<\epsilon
\end{equation*}
for all $x\in\mathbb{R}$ and for all $a\leq b$ such that the distance from the interval $[a,b]$ to $0$ is more than $M$.  The Cauchy criterion for uniform convergence guarantees that the improper Riemann integrals
\begin{equation*}
 \int_0^{\infty}e^{-ixu-\beta u^m}du\hspace{.2cm}\text{and}\hspace{.2cm}\int_{-\infty}^0e^{-ixu-\beta u^m}du
\end{equation*}
converge uniformly in $x$ on $\mathbb{R}$. This proves item $\mathit{2}$.

Let us now assume that $m=2$. We remark that the above argument fails in this case because $\partial_{u}^2 f$ is a non-zero constant. Consequently, we need to use $\partial_{u} f$, which depends on both $u$ and $x$, to bound our integrals. Let $\epsilon>0$ and let $K\subseteq \mathbb{R}$ be a compact set. We choose $M>0$ so that 
\begin{equation*}
 \frac{4}{|2\tau M+x|}<\epsilon
\end{equation*}
for all $x\in K$. By applying Lemma \ref{fprimebound} and making an argument analogous to that given in the previous case we conclude that
\begin{equation*}
\left|\int_a^be^{-ixu-\beta u^2}du\right|=\left|\int_a^be^{if(x,u)}du\right|<\epsilon
\end{equation*}
for all $x\in K$ and for all $a\leq b$ such that the distance from the interval $[a,b]$ to $0$ is more than $M$.  Again, an application of the Cauchy criterion gives the desired result.
\end{proof}

\begin{proposition}\label{Hassymptoticprop}
Let $\beta$ be non-zero and purely imaginary. Then for any natural number $m>2$ there exist positive constants $A,B$ such that
\begin{equation}\label{Hassymptotic}
|H_m^{\beta}(x)|\leq\frac{A}{|x|^{\frac{m-2}{2(m-1)}}}+\frac{B}{|x|}
\end{equation}
for all $x\in\mathbb{R}$.
\end{proposition}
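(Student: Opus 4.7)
Writing $\beta = i\tau$ with $\tau \in \mathbb{R}\setminus\{0\}$, the function in question is an oscillatory integral
\begin{equation*}
H_m^{\beta}(x) = \frac{1}{2\pi}\int_{\mathbb{R}} e^{if(u)}\, du, \qquad f(u) = -xu - \tau u^m,
\end{equation*}
and the plan is to attack it directly with the van der Corput lemmas (Lemmas \ref{fprimebound} and \ref{fdoubleprimebound}). The stationary points of the phase are the roots of $f'(u) = -x - m\tau u^{m-1}$, which lie on the scale $y := (|x|/(m|\tau|))^{1/(m-1)}$; at such points $|f''|$ is of order $|x|^{(m-2)/(m-1)}$. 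The two terms on the right of \eqref{Hassymptotic} correspond exactly to the two regimes: the $f''$-controlled contribution near the stationary scale gives $|x|^{-(m-2)/(2(m-1))}$, while the $f'$-controlled contribution away from that scale gives $|x|^{-1}$.

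After using the symmetries $u \mapsto -u$ and the interchangeability of the signs of $x$ and $\tau$, I may assume $x, \tau > 0$. I then split $\mathbb{R}$ into five intervals adapted to the stationary scale:
\begin{equation*}
I_1 = (-\infty, -2y],\ I_2 = [-2y, -y/2],\ I_3 = [-y/2, y/2],\ I_4 = [y/2, 2y],\ I_5 = [2y, \infty).
\end{equation*}
On the outer intervals $I_1$ and $I_5$, the bound $|f'(u)| \geq m\tau|u|^{m-1} - x \geq (2^{m-1}-1)x$ together with the fact that $f''$ keeps a constant sign on each of these intervals (so $f'$ is monotonic) allows Lemma \ref{fprimebound} to yield a contribution of order $1/x$. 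On the central interval $I_3$, the triangle inequality gives $|f'(u)| \geq x - m\tau(y/2)^{m-1} = (1 - 2^{-(m-1)})x \geq 3x/4$ for $m \geq 3$; to apply Lemma \ref{fprimebound} I split $I_3$ at $0$ when $m$ is odd so that $f''$ has a constant sign on each piece, again producing $O(1/x)$. On the transition intervals $I_2$ and $I_4$, the estimate $|f''(u)| = m(m-1)\tau |u|^{m-2} \geq c_m\, x^{(m-2)/(m-1)}$ for a constant $c_m>0$ permits Lemma \ref{fdoubleprimebound} to deliver $O(x^{-(m-2)/(2(m-1))})$. Summing the five pieces produces the claimed inequality for $x$ bounded below by a fixed positive constant.

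For small $|x|$ (including $x=0$) the estimate must be handled separately: Proposition \ref{Hconverge} yields uniform convergence of the defining integral on compact sets, so $H_m^{\beta}$ is continuous and in particular bounded on any bounded interval. Since both $|x|^{-(m-2)/(2(m-1))}$ and $|x|^{-1}$ diverge as $x \to 0$ (using $m > 2$), the constants $A$ and $B$ can be enlarged to absorb this regime. I expect the only step requiring genuine care to be the central interval $I_3$ in the odd-$m$ case, where the non-monotonicity of $f'$ across $u=0$ forces the subdivision at the origin; once that is handled, the remaining pieces are routine applications of the van der Corput lemmas and the proof concludes by assembling the five contributions.
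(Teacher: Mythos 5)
Your proposal is correct and follows essentially the same strategy as the paper: write $H_m^{\beta}$ as an oscillatory integral with phase $f(u)=-xu-\tau u^m$, split $\mathbb{R}$ around the stationary scale $(|x|/m|\tau|)^{1/(m-1)}$, and apply the van der Corput bounds (Lemmas \ref{fprimebound} and \ref{fdoubleprimebound}), the $f'$ estimate producing the $|x|^{-1}$ term and the $f''$ estimate producing the $|x|^{-(m-2)/(2(m-1))}$ term. The paper uses three intervals with the $f''$ bound on the two unbounded tails, while you use five intervals and control the tails with the $f'$ bound instead; both work, the only thing you leave slightly implicit is that applying Lemma \ref{fprimebound} on an unbounded interval requires the same limiting argument via Proposition \ref{Hconverge} that the paper spells out for its $f''$ tail estimate. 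One point in your favor: your explicit subdivision of the central interval at $u=0$ when $m$ is odd is actually necessary for Lemma \ref{fprimebound}'s monotonicity hypothesis (since $u\mapsto u^{m-1}$ is not monotone through the origin when $m-1$ is even), and this is a detail the paper's proof glosses over by applying the lemma directly on $[-M,M]$.
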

\begin{proof}
Let $\beta=i\tau$, where $\tau$ is a non-zero real number, and set \break$f(u,x)=-xu-\tau u^m$. For $x\neq 0$, put $M=(2m|\tau|/|x|)^{-1/(m-1)}$ and write
\begin{equation}\label{Hboundpropeq1}
H_m^{\beta}(x)=\frac{1}{2\pi}\int_{-\infty}^{-M}e^{if(u,x)}\,du+\frac{1}{2\pi}\int_{M}^{\infty}e^{if(u,x)}\,du+\frac{1}{2\pi}\int_{-M}^{M}e^{if(u,x)}\,du.
\end{equation}
Observe that
\begin{eqnarray*}
\inf_{u\in[-M,M]}|\partial_uf(u,x)|&=&\inf_{u\in[-M,M]}|x+m\tau u^{m-1}|\\
&=& m|\tau|\inf_{u\in[-M,M]}\left|\frac{x}{m\tau}+u^{m-1}\right|\geq m|\tau| M^{m-1}=\frac{|x|}{2},
\end{eqnarray*}
and therefore
\begin{equation}\label{Hboundpropeq2}
\left|\int_{-M}^{M}e^{if(u,x)}\,du\right|\leq\frac{8}{|x|} 
\end{equation}
in view of Lemma \ref{fprimebound}. Similarly, there exists $C>0$ such that for any $N>M$,
\begin{equation*}
\inf_{u\in[M,N]}|\partial_u^2f(u,x)|\geq \frac{|x|^{\frac{m-2}{m-1}}}{C^2}.
\end{equation*}
Thus, by appealing to Lemma \ref{fdoubleprimebound} and Proposition \ref{Hconverge}, we have
\begin{eqnarray*}
\left|\int_{M}^{\infty}e^{if(u,x)}\,du\right|&=&\lim_{N\rightarrow\infty}\left|\int_M^N e^{if(u,x)}\,du\right|\\
&\leq&\limsup_N \frac{8}{\sqrt{\inf_{u\in[M,N]}|\partial_u^2f(u,x)|}}\leq\frac{C}{|x|^{\frac{m-2}{2(m-1)}}}.
\end{eqnarray*}
By an analogous computation,
\begin{equation}\label{Hboundpropeq3}
\left|\int_{-\infty}^{M}e^{if(u,x)}\,du\right|\leq \frac{C}{|x|^{\frac{m-2}{2(m-1)}}}
\end{equation}
for $C>0$. The desired result follows by combining the estimates \eqref{Hboundpropeq1}, \eqref{Hboundpropeq2} and \eqref{Hboundpropeq3}.
\end{proof}

\noindent The final proposition of this section, Proposition \ref{Hanalytic}, asserts the analyticity and non-triviality of the functions $H_m^{\beta}$ for all values of $m$ and $\beta$ considered above. To preface it, let's consider the case in which $m\geq 2$ is even and $\Re(\beta)>0$: For any $x\in\mathbb{R}$, observe that
\begin{equation*}
H_m^{\beta}(x)=\frac{1}{2\pi}\int_{\mathbb{R}}e^{-ixu}e^{-\beta u^m}\,du=\frac{1}{2\pi}\int_{\mathbb{R}}\sum_{k=0}^{\infty}\frac{(-ixu)^k}{k!}e^{-\beta u^m}\,du.
\end{equation*}
Setting $2b=\Re(\beta)$, note that
\begin{equation*}
\int_{\mathbb{R}}\sum_{k=0}^{\infty}\left|\frac{(-ixu)^k}{k!}e^{-\beta u^m}\right|\,du= \int_{\mathbb{R}}\sum_{k=0}^{\infty}\frac{|xu|^k}{k!}e^{-2b u^m}\,du=\int_{\mathbb{R}}e^{|xu|-bu^m}e^{-bu^m}\,du.
\end{equation*}
By a simple maximization argument, one finds that 
$|xu|-bu^m\leq c |x|^{m/(m-1)}$ for all $u\in\mathbb{R}$, where $c=(1-m^{-1})(mb)^{-1/(m-1)}>0$. Therefore,
\begin{equation*}
\int_{\mathbb{R}}\sum_{k=0}^{\infty}\left|\frac{(-ixu)^k}{k!}e^{-\beta u^m}\right|\,du\leq e^{c|x|^{m/(m-1)}}\int_{\mathbb{R}}e^{-bu^m}\,du<\infty
\end{equation*}
and so this application of Tonelli's theorem justifies the following use of Fubini's theorem:
\begin{equation*}
\int_{\mathbb{R}}\sum_{k=0}^{\infty}\frac{(-ixu)^k}{k!}e^{-\beta u^m}\,du=\sum_{k=0}^{\infty}\int_{\mathbb{R}}\frac{(-iu)^kx^k}{k!}e^{-\beta u^m}\,du.
\end{equation*}
Therefore 
\begin{equation}\label{Hanalyticeq1}
H_m^{\beta}(x)=\frac{1}{2\pi}\sum_{k=0}^{\infty}\left(\int_{\mathbb{R}}\frac{(-iu)^k}{k!}e^{-\beta u^m}\,du\right)x^k
\end{equation}
for each $x\in\mathbb{R}$; note that the convergence of the series is part of the conclusion. Consequently, $H_m^{\beta}$ is analytic on $\mathbb{R}$. Moreover, from the representation \eqref{Hanalyticeq1}, it is clear that $H_m^{\beta}(x)$ is not identically zero. When $m>1$ is odd and $\beta$ is purely imaginary, the same conclusion was reached by R. Hersch \cite{RH}. His proof involves changing the contour of integration from $\mathbb{R}$ to a pair of rays on which the integrand is absolutely integrable. When $m\geq 2$ is even and $\beta$ is purely imaginary, Hersh's argument pushes through with very little modification.  We therefore summarize the result below and, in the case that $\Re(\beta)=0$, refer the reader to Theorem 4 of \cite{RH} for the essential details.

\begin{proposition}\label{Hanalytic}
Let $m\geq 2$ be a natural number and let $\beta$ be a non-zero complex number
with $\Re(\beta)\geq 0$. If $\Re(\beta)>0$ additionally assume that
$m$ is even. Then $H_m^{\beta}$ is analytic and not identically zero.
\end{proposition}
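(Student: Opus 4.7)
The case $\Re(\beta) > 0$ (with $m$ even) is already handled by the calculation preceding the statement: equation \eqref{Hanalyticeq1} exhibits $H_m^{\beta}$ as the sum of a globally convergent power series in $x$, whose leading coefficient $\frac{1}{2\pi}\int_{\mathbb{R}} e^{-\beta u^m}\,du$ has strictly positive real part, so $H_m^{\beta}$ is entire and not identically zero. It therefore remains to treat the case $\Re(\beta) = 0$, where $\beta = i\tau$ with $\tau \in \mathbb{R}\setminus\{0\}$ and the integral defining $H_m^{\beta}$ is only conditionally convergent by Proposition \ref{Hconverge}.

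The plan is to deform the contour of integration in \eqref{Hdef} from $\mathbb{R}$ to a new contour $\gamma$ consisting of two rays emanating from the origin, whose directions are chosen so that $\Re(-\beta u^m) < 0$ along each ray; the specific angles depend on $\mathrm{sgn}(\tau)$ and the parity of $m$, but in every case small enough rotations of the positive and negative half-axes suffice (note that $\Re(-i\tau u^m) = -|\tau|\,|u|^m \sin(m\theta)$ for $u = |u|e^{i\theta}$ up to sign conventions, and we choose $\theta$ in an appropriate interval of $(0,\pi/m)$). Along such a $\gamma$, one has $|e^{-\beta u^m}| \leq e^{-c|u|^m}$ for some $c>0$, so $e^{-ixu}e^{-\beta u^m}$ is absolutely integrable in $u$, uniformly for $x$ in bounded subsets of $\mathbb{C}$. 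To justify replacing $\mathbb{R}$ by $\gamma$, we apply Cauchy's theorem to a family of sector contours of radius $R$ having one side on $\mathbb{R}$ and the other on $\gamma$; the arc contribution vanishes as $R\to\infty$ by the same van der Corput bounds (Lemmas \ref{fprimebound} and \ref{fdoubleprimebound}) already used in the proof of Proposition \ref{Hconverge}.

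With the deformation in hand, we repeat the Fubini argument that produced \eqref{Hanalyticeq1}: expand $e^{-ixu}$ in its Taylor series in $x$ and invoke Tonelli, using the bound $|e^{-ixu}| \leq e^{|x||u|}$ against the decay $e^{-c|u|^m}$ with $m\geq 2$ to interchange sum and integral. This yields
$$
H_m^{\beta}(x) \;=\; \frac{1}{2\pi}\sum_{k=0}^{\infty}\left(\int_{\gamma}\frac{(-iu)^k}{k!}\,e^{-\beta u^m}\,du\right)x^k
$$
for every $x\in\mathbb{R}$, with infinite radius of convergence; in particular, $H_m^{\beta}$ extends to an entire function of $x$. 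Non-triviality follows by inspecting the $k=0$ coefficient: after a change of variables $u = |\tau|^{-1/m}s$ along each ray of $\gamma$, it reduces to a non-zero constant multiple of $\int_0^\infty e^{-s^m}\,ds = \Gamma(1/m)/m$.

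The main obstacle is the careful justification of the contour deformation given only conditional convergence of the original improper Riemann integral. This is resolved by first truncating to $[-R,R]$, closing the contour through the appropriate sector, and passing to the limit $R\to\infty$ using the oscillatory-integral estimates already established. Apart from this technicality, the argument parallels both the $\Re(\beta)>0$ case presented in the excerpt and the contour-rotation reasoning of Hersch cited there.
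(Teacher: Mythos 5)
Your proposal is correct and follows essentially the same approach as the paper: for $\Re(\beta)>0$ you reproduce the paper's own Tonelli--Fubini expansion leading to \eqref{Hanalyticeq1}, and for $\Re(\beta)=0$ you carry out the contour rotation to a pair of rays with exponential decay, which is exactly the Hersch argument the paper explicitly invokes (Theorem 4 of \cite{RH}) and declines to reproduce. One small imprecision worth flagging: the van der Corput estimates of Lemmas \ref{fprimebound} and \ref{fdoubleprimebound} apply to real-variable oscillatory integrals $\int_a^b e^{if(u)}\,du$ with $f$ real, whereas after parametrizing the arc by $u=Re^{i\theta}$ the integrand is not of that form (the modulus of the exponential varies along the arc), so those lemmas do not transfer verbatim. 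The arc contribution is instead killed by the direct bound $\left|e^{-ixu-\beta u^m}\right|\leq e^{(|x|R-cR^m)\theta}$ valid for a suitably chosen rotation direction (using $\sin(m\theta)\geq(2/\pi)m\theta$ near $\theta=0$), which integrates to $O\left(R/(cR^m-|x|R)\right)\to 0$ since $m\geq 2$; the conclusion you need is true, but the cited lemmas are not the right tool for that step.
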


\section{Local limits}\label{locallimitsec}

\noindent In this section we prove Theorem \ref{mainweak} and the second conclusion, \eqref{mainstrongconvergenceconclusion}, of Theorem \ref{mainstrong}. To this end, the following three lemmas, Lemmas \ref{easylemma}, \ref{hardlemma22} and \ref{hardlemma2m}, focus on local approximations to Fourier-type integrals involving integer powers of an analytic function $\nu$ near a point $\xi_0$ at which $|\nu(\xi_0)|=1$. The lemmas treat the cases in which $\xi_0\sim(1;m)$, $\xi_0\sim(2;2)$ and $\xi_0\sim(2;m)$, respectively. The approximants are precisely the functions $H_m^{\beta}$ studied in the previous section.

\begin{lemma}\label{easylemma}
Let  $\nu:\mathbb{R}\rightarrow\mathbb{C}$ be analytic on a neighborhood of a point $\xi_0$ for which $|\nu(\xi_0)|=1$. Assume that $\xi_0\sim (1;m)$ with associated constants $\alpha$ and
$\beta$. Then for all $\epsilon>0$ there is a $\delta >0$
and a natural number $N$ such that
\begin{equation}
 \left|\frac{n^{1/m}}{2\pi}\int_{|\xi-\xi_0|\leq\delta}\nu(\xi)^n
e^{-ix\xi}d\xi-e^{-ix\xi_0}\nu(\xi_0)^n H_{m}^{\beta}\left(\frac{x-\alpha
n}{n^{1/m}}\right)\right|<\epsilon
\end{equation}
for all $n>N$ and for all real numbers $x$.
\end{lemma}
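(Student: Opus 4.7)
The plan is to localize around $\xi_0$ by the substitution $\xi = \xi_0 + \eta$ and then rescale $\eta = u/n^{1/m}$, so that the $n$-dependence concentrates into a single exponential factor that can be compared directly to the integrand defining $H_m^{\beta}$. Writing $\nu(\xi)^n = \nu(\xi_0)^n e^{n\Gamma(\eta)}$ and setting $y = (x-\alpha n)/n^{1/m}$, the left-hand integral becomes
\begin{equation*}
\nu(\xi_0)^n e^{-ix\xi_0}\cdot\frac{1}{2\pi}\int_{|u|\leq\delta n^{1/m}} e^{-iyu}\,e^{-\beta u^m}\,e^{E_n(u)}\,du,
\end{equation*}
where $E_n(u) := n\Gamma(u/n^{1/m}) - i\alpha u n^{(m-1)/m} + \beta u^m$ is the Taylor remainder. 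By the estimate \eqref{type1approx} from Lemma \ref{typeslemma}, one has $|E_n(u)|\leq B|u|^{m+1}/n^{1/m}$ on $|u|\leq \delta n^{1/m}$ for $\delta$ sufficiently small.

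The key quantitative step is to choose $\delta>0$ small enough so that $B\delta \leq \Re(\beta)/2$, whence $|E_n(u)|\leq \tfrac{1}{2}\Re(\beta)|u|^m$ on the range of integration, and consequently
\begin{equation*}
\bigl|e^{-\beta u^m}\,e^{E_n(u)}\bigr|\;\leq\; e^{-\Re(\beta) u^m/2}.
\end{equation*}
Since $m$ is even and $\Re(\beta)>0$ (as $\xi_0$ is of type $1$), this Gaussian-type dominant is integrable on $\mathbb{R}$. Factoring out the unit-modulus quantity $\nu(\xi_0)^n e^{-ix\xi_0}$, the difference on the left of the desired inequality is bounded in absolute value by
\begin{equation*}
\frac{1}{2\pi}\int_{|u|\leq\delta n^{1/m}} e^{-\Re(\beta)u^m/2}\bigl|e^{E_n(u)}-1\bigr|\,du \;+\; \frac{1}{2\pi}\int_{|u|>\delta n^{1/m}} e^{-\Re(\beta)u^m}\,du,
\end{equation*}
and crucially this bound is \emph{independent of $x$} (equivalently, of $y$), since we have only used $|e^{-iyu}|=1$.

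The tail integral tends to $0$ as $n\to\infty$ by integrability of $e^{-\Re(\beta)u^m}$, so it can be made less than $\epsilon/2$ for $n$ large. For the main integral, I would apply dominated convergence: for each fixed $u$, $E_n(u)\to 0$ so $e^{E_n(u)}-1\to 0$, while the integrand is pointwise dominated by $2 e^{-\Re(\beta)u^m/2}\in L^1(\mathbb{R})$. Hence this term also drops below $\epsilon/2$ once $n$ is large, uniformly in $x$. I do not expect any serious obstacle here; the only subtle point is keeping the two smallness conditions on $\delta$ (namely $B\delta\leq\Re(\beta)/2$ and $\delta$ small enough for Lemma \ref{typeslemma}(1) to apply) compatible, which is immediate since $\delta$ is chosen once and for all before letting $n\to\infty$.
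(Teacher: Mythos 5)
Your argument is sound and follows the same conceptual line as the paper — localize at $\xi_0$, rescale $\xi=\xi_0+u n^{-1/m}$, approximate $n\Gamma(u n^{-1/m})$ by $i\alpha u n^{(m-1)/m}-\beta u^m$ via \eqref{type1approx}, and compare against the integral defining $H_m^\beta$ — but the technical execution differs in a useful way. The paper introduces an auxiliary cutoff $M$, splits the domain into $[-M,M]$, $M<|u|\le\delta n^{1/m}$, and $|u|>M$, controls the middle piece with the separate bound \eqref{type1bound} ($|g(u)|^n\le e^{-Cu^m}$), and applies bounded convergence on the fixed interval $[-M,M]$. You bypass $M$ and \eqref{type1bound} entirely: the single constraint $B\delta\le\Re(\beta)/2$ turns the Taylor remainder into a Gaussian-type dominant $e^{-\Re(\beta)u^m/2}$ directly from \eqref{type1approx}, letting you run one dominated convergence argument over the growing domain $|u|\le\delta n^{1/m}$. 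This is a modest streamlining that trades bounded convergence on a fixed set for dominated convergence on an $n$-dependent set.

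One small slip to fix: in your displayed bound, the first integrand should read $e^{-\Re(\beta)u^m}\bigl|e^{E_n(u)}-1\bigr|$, not $e^{-\Re(\beta)u^m/2}\bigl|e^{E_n(u)}-1\bigr|$. With the weaker exponent $-\Re(\beta)u^m/2$ that you wrote, the claimed dominant $2e^{-\Re(\beta)u^m/2}$ does not hold, because $|e^{E_n(u)}-1|$ is only bounded by $e^{\Re(\beta)u^m/2}+1$, not by $2$, so the product is merely bounded by a constant and is not in $L^1$. Keeping the sharper $e^{-\Re(\beta)u^m}$ from $|e^{-\beta u^m}|$ one gets
\begin{equation*}
e^{-\Re(\beta)u^m}\bigl|e^{E_n(u)}-1\bigr|\le e^{-\Re(\beta)u^m}\bigl(e^{\Re(\beta)u^m/2}+1\bigr)\le 2e^{-\Re(\beta)u^m/2},
\end{equation*}
which is indeed in $L^1(\mathbb{R})$ since $m$ is even and $\Re(\beta)>0$, and the rest of your argument goes through as stated.
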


\begin{proof}
Let $\epsilon>0$ and set

\begin{equation*}
 y_n=(x-\alpha n)n^{-1/m}
\end{equation*}
and
\begin{equation*}
 g(u)=\left[\nu(\xi_0)^{-1} e^{-i\alpha u n^{-1/m}}\nu(\xi_0+u
n^{-1/m})\right]. 
\end{equation*}
Upon changing variables of integration we have
\begin{eqnarray*}
\lefteqn{\frac{n^{1/m}}{2\pi}\int_{|\xi-\xi_0|\leq\delta}\nu(\xi)^n e^{-ix\xi}d\xi}\\
&=&\frac{e^{-ix\xi_0}\nu(\xi_0)^n}{2\pi }\int_{|u|\leq\delta
n^{1/m}}\left[\nu(\xi_0)^{-1}e^{-i\alpha u n^{-1/m}}\nu(\xi_0+u
n^{-1/m})\right]^n e^{-iu\frac{x-\alpha n}{n^{1/m}}}du\\
&=&\frac{e^{-ix\xi_0}\nu(\xi_0)^n}{2\pi}\int_{|u|\leq\delta
n^{1/m}}g(u)^ne^{-iuy_n}du.
\end{eqnarray*}
Comparing the above integral with $e^{-ix\xi_0}\nu(\xi_0)^n H_{m}^{\beta}(y_n)$
gives
\begin{eqnarray*}
 \lefteqn{\left|\frac{n^{1/m}}{2\pi}\int_{|\xi-\xi_0|\leq\delta}\nu(\xi)^n e^{-ix\xi}d\xi-e^{-ix\xi_0}\nu(\xi_0)^n H_{m}^{\beta}(y_n)\right|}\\
&\leq&\left|\frac{e^{-ix\xi_0}\nu(\xi_0)^n}{2\pi }\int_{|u|\leq M}[g(u)^n-e^{-\beta u^m}]e^{-iuy_n}du\right|\\
&&+\left|\frac{e^{-ix\xi_0}\nu(\xi_0)^n}{2\pi }\int_{M<|u|\leq\delta
n^{1/m}}g(u)^ne^{-iuy_n}du\right|\\
&&+\left|\frac{e^{-ix\xi_0}\nu(\xi_0)^n}{2\pi }\int_{|u|>M}e^{-\beta
u^m}e^{-iuy_n}du\right|\\
&\leq&\int_{-M}^M|g(u)^n-e^{-\beta
u^m}|du+\int_{M<|u|\leq \delta n^{1/m}}|g(u)|^n du+\int_{|u|>M}e^{-\Re(\beta) u^m}du\\
&&=:\mathcal{I}_1+\mathcal{I}_2+\mathcal{I}_3,
\end{eqnarray*}
where $M<\delta n^{1/m}$ will soon be fixed. Notice that $\mathcal{I}_1$, $\mathcal{I}_2$ and $\mathcal{I}_3$ are independent of $x$.

In view of Lemma \ref{typeslemma}, there is $\delta>0$ and $C>0$ for which
\begin{equation}\label{easylemmaeq1}
|g(u)|^n=|v(\xi_0+un^{-1/m})|^n\leq (e^{-C(un^{-1/m})^m})^n=e^{-Cu^m}
\end{equation}
whenever $|u|\leq\delta n^{1/m}$. Therefore, 
\begin{equation*}
\mathcal{I}_2\leq \int_{M<|u|\leq\delta n^{1/m}}e^{-Cu^m}\,du\leq\int_{|u|>M}e^{-Cu^m}\,du
\end{equation*}
and because $e^{-Cu^m}\in L^1(\mathbb{R})$, there exists $M>0$ for which $\mathcal{I}_2<\epsilon/3$. 
Analogously and in view of the fact that $\Re(\beta)>0$, there is $M>0$ for which $\mathcal{I}_3<\epsilon/3$. Selecting $M$ for which these estimates hold and restricting our attention to sufficiently large $n$ for which $M<\delta n^{1/m}$, we move on to estimate $\mathcal{I}_1$.  

Let's first observe that, for all $u$ such that $|u|\leq M<\delta n^{1/m}$,
\begin{equation*}
|g(u)^n-e^{-\beta u^m}|\leq |g(u)|^n+|e^{-\beta u^m}|\leq 2
\end{equation*}
in view of \eqref{easylemmaeq1}. Also, by an appeal to \eqref{type1approx} of Lemma \ref{typeslemma}, for any $u\in[-M,M]$,
\begin{eqnarray*}
\lefteqn{|n((\Gamma (un^{-1/m})-i\alpha un^{-1/m})+\beta u^m|}\\
&=& n|\Gamma(un^{-1/m})-i\alpha un^{-1/m}+\beta (un^{-1/m})^m|\leq nB|un^{-1/m}|^{m+1}\\
&&\hspace{8cm}=B|u|^mn^{-1/m}
\end{eqnarray*}
and so 
\begin{equation*}
\lim_{n\rightarrow\infty}n(\Gamma (un^{-1/m})-i\alpha un^{-1/m})=-\beta u^m. 
\end{equation*}
 Therefore, for each such $u$,
\begin{equation*}
\lim_{n\rightarrow\infty}\left|g(u)^n-e^{-\beta u^m}\right|\\
=\lim_{n\rightarrow\infty}\left|e^{n(\Gamma(un^{-1/m})-i\alpha un^{-1/m})}-e^{-\beta u^m}\right|=0.
\end{equation*}
Because $[-M,M]$ is a set of finite measure, an appeal to the bounded convergence theorem gives $N>(M/\delta)^m$ for which $\mathcal{I}_1\leq \epsilon/3$ for all $n>N$. Combining
the estimates for $\mathcal{I}_1$, $\mathcal{I}_2$ and $\mathcal{I}_3$ gives the desired result. 
\end{proof}

\begin{lemma}\label{hardlemma22}
Let $\nu:\mathbb{R}\rightarrow \mathbb{C}$ be analytic on a neighborhood of a point $\xi_0$ such that $|\nu(\xi_0)|=1$. Assume that $\xi_0\sim(2;2)$ with associated constants $\alpha$ and $\beta$. Let $K\subseteq \mathbb{R}$ be a compact set. Then for all $\epsilon>0$ there is a $\delta>0$ and a natural number $N$ such that 
\begin{equation}
\left|\frac{n^{1/2}}{2\pi}\int_{|\xi-\xi_0|\leq \delta}\nu(\xi)^ne^{-i(xn^{1/2}+\alpha n)\xi}d\xi-e^{-i(xn^{1/2}+\alpha n)\xi_0}\nu(\xi_0)^n H_2^{\beta}(x)\right|<\epsilon
\end{equation}
for all $n>N$ and for all $x\in K$.
\end{lemma}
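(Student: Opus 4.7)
The plan is to follow the template of Lemma \ref{easylemma}, with one crucial modification: since $\beta = ip(0)$ is purely imaginary, the kernel $e^{-\beta u^2}$ defining $H_2^\beta$ is merely oscillatory rather than absolutely integrable, so the tails must be handled by van der Corput-type estimates rather than by dominated convergence, and the compactness of $K$ is what compensates for the loss of absolute integrability.

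First I would change variables $\xi = \xi_0 + un^{-1/2}$ in the left-hand integral. Using the factorization from Lemma \ref{typeslemma}(2),
\[
\nu(\xi+\xi_0) = \nu(\xi_0)\, g(\xi)\exp(i\alpha\xi - i\xi^2 p(\xi)),
\]
the exponential $e^{i\alpha u n^{1/2}}$ coming from the linear term of $\Gamma$ cancels the $e^{-i\alpha u n^{1/2}}$ produced by the $\alpha n \xi$ in the original phase. After dividing out the common prefactor $e^{-i(xn^{1/2}+\alpha n)\xi_0}\nu(\xi_0)^n$, the estimate to prove reduces to showing that
\[
\left|\frac{1}{2\pi}\int_{|u|\le\delta n^{1/2}} g(un^{-1/2})^n\, e^{-iu^2 p(un^{-1/2}) - ixu}\,du - H_2^{\beta}(x)\right| < \epsilon
\]
for $n > N$ and $x \in K$ (recall $\beta = ip(0)$). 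I would then split both integrals at $|u| = M$ and estimate three pieces: the inner region $|u|\le M$, the outer part of the left-hand integral $M < |u|\le \delta n^{1/2}$, and the tail of $H_2^\beta$ on $|u| > M$.

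The inner region is handled by bounded convergence. On $[-M, M]$, $p(un^{-1/2}) \to p(0)$ uniformly, and because $g(\xi) = \exp(-\gamma\xi^k + O(\xi^{k+1}))$ with the even integer $k > 2$ (hence $k \ge 4$), one has $g(un^{-1/2})^n = \exp(-\gamma u^k n^{1-k/2} + O(n^{1/2-k/2})) \to 1$ uniformly on $[-M,M]$; the uniform bound $|g(un^{-1/2})^n|\le 1$ from Lemma \ref{typeslemma} and $|e^{-ixu}|=1$ make this uniform in $x \in K$. For the tail of $H_2^\beta$, the argument from Proposition \ref{Hconverge}(3) applies: the derivative of the phase $-p(0)u^2 - xu$ is $-2p(0)u - x$, of absolute value at least $|p(0)|M/2$ on $|u|\ge M$ once $M \ge 2\sup_K|x|/|p(0)|$, so Lemma \ref{fprimebound} yields a bound of order $1/M$, uniform in $x \in K$.

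The main obstacle is the outer part of the left-hand integral, where we cannot take absolute values because $|e^{-iu^2 p(un^{-1/2})}|=1$. Here I would apply Lemma \ref{minbound} with amplitude $g_n(u) := g(un^{-1/2})^n$ and real phase $\psi_n(u,x) = -u^2 p(un^{-1/2}) - xu$. Since $p(0)\ne 0$ and $p$ is a polynomial, shrinking $\delta$ if necessary ensures that on the integration range $|p(un^{-1/2})|\ge |p(0)|/2$, that $|u^2 n^{-1/2} p'(un^{-1/2})| \le \delta|u|\cdot|p(0)|/4$ (using $|u|\le \delta n^{1/2}$), and that $\partial_u^2\psi_n$ retains constant sign (so $\partial_u\psi_n$ is monotonic). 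A direct computation then gives
\[
|\partial_u \psi_n(u,x)| \ge \tfrac{|p(0)|}{4}|u| - \sup_K|x|,
\]
which is bounded below by $c|u|$ for some $c > 0$ once $|u|\ge M$ with $M$ depending only on $K$. The bound $4/(cM)$ from Lemma \ref{fprimebound} is combined with $\|g_n\|_\infty\le 1$ and a uniform-in-$n$ $L^1$ bound on $g_n'$; the latter is a routine calculation using $|g'(\xi)|\le D|\xi|^{k-1}e^{-C\xi^k}$ from Lemma \ref{typeslemma} and the substitution $v = un^{-1/2}$, which absorbs all $n$-dependence into a convergent gamma-type integral. Choosing $M$ large enough so that the two tail contributions are each $<\epsilon/3$ and then $N$ large enough for the inner piece to be $<\epsilon/3$ completes the argument.
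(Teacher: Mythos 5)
Your proposal follows essentially the same route as the paper's proof: change variables to $u = (\xi-\xi_0)n^{1/2}$, split at a cutoff $M$, handle the inner region $|u|\le M$ by bounded convergence using \eqref{type2approx} and \eqref{type2bound}, bound the tail of $H_2^\beta$ via Lemma \ref{fprimebound} exactly as in Proposition \ref{Hconverge}(3), and control the outer piece $M<|u|\le\delta n^{1/2}$ by Lemma \ref{minbound} applied to the real phase $-u^2p(un^{-1/2})-xu$ together with the uniform-in-$n$ $L^1$ bound on the amplitude's derivative supplied by \eqref{type2derbound}. The only differences are cosmetic (you keep the outer estimate in the rescaled $u$-variable rather than reverting to $\xi$, and you merge the two outer half-intervals into one piece); the key observation in both arguments is the same, namely that since $p(0)\ne 0$ the first derivative of the phase grows linearly in $|u|$, with the compactness of $K$ absorbing the $-x$ term, so that the oscillatory tails die like $1/M$ uniformly in $x\in K$ and $n$.
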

\begin{proof}
Let $\epsilon>0$, let $K\subseteq\mathbb{R}$ be a fixed compact set and choose $\delta>0$ so that the estimates \eqref{type2approx}, \eqref{type2bound} and \eqref{type2derbound} of Lemma \ref{typeslemma} are valid. Changing variables of integration we write

\begin{eqnarray*}
 \lefteqn{\frac{n^{1/2}}{2\pi}\int_{|\xi-\xi_0|\leq \delta}\nu(\xi)^ne^{-i(xn^{1/2}+\alpha n)\xi}d\xi=}\\ 
&&e^{-i(xn^{1/2}+\alpha n)\xi_0}\nu(\xi_0)^n \frac{n^{1/2}}{2\pi}\int_{|\xi|\leq\delta}[\nu(\xi_0)^{-1}\nu(\xi+\xi_0)]^ne^{-i(xn^{1/2}+\alpha n)\xi}d\xi.
\end{eqnarray*}
Upon setting 
\begin{equation*}
  \mathcal{D}=\left|\frac{n^{1/2}}{2\pi}\int_{|\xi-\xi_0|\leq \delta}\nu(\xi)^ne^{-i(xn^{1/2}+\alpha n)\xi}d\xi-e^{-i(xn^{1/2}+\alpha n)\xi_0}\nu(\xi_0)^n H_2^{\beta}(x)\right|,
\end{equation*}
we have
\begin{eqnarray*}
\mathcal{D}&\leq&\left|\frac{n^{1/2}}{2\pi}\int_{|\xi|\leq Mn^{-1/2}}[\nu(\xi_0)^{-1}\nu(\xi+\xi_0)]^ne^{-i(xn^{1/2}+\alpha n)\xi}d\xi-H_2^{\beta}(x)\right|\\
&&+n^{1/2}\left|\int_{Mn^{-1/2}<|\xi|\leq \delta}[\nu(\xi_0)^{-1}\nu(\xi+\xi_0)]^ne^{-i(xn^{1/2}+\alpha n)\xi}d\xi\right|,
\end{eqnarray*}
where for now $0<M<\delta n^{1/2}$ and we have used the fact that $|\nu(\xi_0)|=1$. Continuing in this manner,
\begin{eqnarray*}
 \mathcal{D}&\leq& \left|\frac{1}{2\pi}\int_{|u|\leq M}[\nu(\xi_0)^{-1}\nu(un^{-1/2}+\xi_0)]^ne^{-i(x+\alpha n^{1/2})u}du-H_2^{\beta}(x)\right|\\
&&+n^{1/2}\left|\int_{Mn^{-1/2}<|\xi|\leq \delta}[\nu(\xi_0)^{-1}\nu(\xi+\xi_0)]^ne^{-i(xn^{1/2}+\alpha n)\xi}d\xi\right|\\
&\leq& \left|\int_{|u|\leq M}\left([\nu(\xi_0)^{-1}\nu(un^{-1/2}+\xi_0)e^{-i\alpha un^{-1/2}}]^n-e^{-\beta u^2}\right)e^{-ixu}du\right|\\
&&+\left|\int_{|u|>M}e^{-ixu-\beta u^2}du\right|\\
&&+n^{1/2}\left|\int_{Mn^{-1/2}}^{\delta}[\nu(\xi_0)^{-1}\nu(\xi+\xi_0)]^ne^{-i(xn^{1/2}+\alpha n)\xi}d\xi\right|\\
&&+n^{1/2}\left|\int_{-\delta}^{-Mn^{-1/2}}[\nu(\xi_0)^{-1}\nu(\xi+\xi_0)]^ne^{-i(xn^{1/2}+\alpha n)\xi}d\xi\right|\\
&&=:\mathcal{I}_1+\mathcal{I}_2+\mathcal{I}_3+\mathcal{I}_4,
\end{eqnarray*}
where we have made a change of variables and used the definition of $H_2^{\beta}$. We now estimate the terms $\mathcal{I}_i$ for $i=1,2,3,4$. First, using Lemma \ref{Hconverge} we choose $M>0$ so that $\mathcal{I}_2\leq \epsilon/4$ for all $x\in K$. Let us now focus on $\mathcal{I}_3$. We write
\begin{eqnarray*}
\mathcal{I}_3&=& n^{1/2}\left|\int_{Mn^{-1/2}}^{\delta}[\nu(\xi_0)^{-1}\nu(\xi+\xi_0)]^ne^{-i(xn^{1/2}+\alpha n)\xi}d\xi\right|\\
&=&n^{1/2}\left|\int_{Mn^{-1/2}}^{\delta}[\nu(\xi_0)^{-1}\nu(\xi+\xi_0)\exp(-i\alpha+i\xi^2p(\xi))]^ne^{i(n\xi^2p(\xi)-\xi xn^{1/2})}d\xi\right|\\
&=&n^{1/2}\left|\int_{Mn^{-1/2}}^{\delta}g(\xi)^n e^{if_n(\xi)}d\xi\right|,
\end{eqnarray*}
where we have put
\begin{equation*}
g(\xi)=\nu(\xi_0)^{-1}\nu(\xi+\xi_0)\exp(-i\alpha+i\xi^2p(\xi))
\end{equation*}
and
\begin{equation*}
f_n(\xi)=(n\xi^2p(\xi)-\xi xn^{1/2}).
\end{equation*}
We wish to apply Lemma \ref{minbound} to the above integral. Set
\begin{equation*}
B=4\left(1+\int_{\mathbb{R}}D|u|^{k-1}e^{-C|u|^k}du\right),
\end{equation*}
where $C,D\geq0$ are the constants appearing in \eqref{type2bound} and \eqref{type2derbound} of Lemma \ref{typeslemma}.
Since $\xi^2p(\xi)$ is a polynomial with $2$ being the smallest power of its terms, we can further restrict $\delta>0$ so that $f_n''(\xi)\neq 0$ and
\begin{equation*}
c_1 \xi\leq \frac{d}{d\xi}(\xi^2p(\xi))\leq c_2\xi
\end{equation*}
for all $\xi\in[Mn^{-1/2},\delta]$, where $c_1$ and $c_2$ are non-zero real numbers of the same sign. Consequently, we can select $M>0$ and a natural number $N$ so that 
\begin{equation*}
 \inf_{\xi\in[Mn^{1/2},\delta]}|f_n'(\xi)|>\frac{4Bn^{1/2}}{\epsilon}
\end{equation*}
for all $x$ in the compact set $K$ and for all $n>N$. Finally, an application of Lemma \ref{minbound} with the above estimate and a calculation similar to that done in the proof of Lemma \ref{bigolem} shows
\begin{equation*}
\mathcal{I}_3\leq \frac{B}{\inf_{\xi\in[Mn^{1/2},\delta]}|f_n'(\xi)|}< \frac{\epsilon}{4}
\end{equation*}
for all $n>N$ and for all $x\in K$. An analogous argument gives the same estimate for $\mathcal{I}_4$.

Before treating $\mathcal{I}_1$, we fix $M$ as the maximal $M$ for which the above estimates hold simultaneously. In view of \eqref{type2bound} of Lemma \ref{typeslemma}, an analogous argument to that given in the proof of Lemma \ref{easylemma} shows that the absolute value of integrand in $\mathcal{I}_1$ is bounded above by $2$ for all $n$. Furthermore, for any $u\in [-M,M]$,
\begin{eqnarray*}
\lefteqn{|n(\Gamma(un^{-1/2})-i\alpha un^{-1/2})+\beta u^2|}\\
&\leq& n|\Gamma(un^{-1/2})-i\alpha un^{-1/2}+ip(u n^{-1/2}) (un^{-1/2})^2|+|\beta u^2-ip(u n^{-1/2})u^2|\\
&\leq& Bn(u n^{-1/2})^k+u^2|\beta-ip(un^{-1/2})|\\
&\leq& Bu^k n^{1-k/2}+u^2|\beta-ip(un^{-1/2})|,
\end{eqnarray*}
where we have used \eqref{type2approx}. Because $p$ is continuous, $ip(0)=\beta$ and $k>2$, it follows that for all $u\in [-M,M]$,
\begin{equation*}
\lim_{n\rightarrow\infty}|n(\Gamma(un^{-1/2})-i\alpha un^{-1/2})+\beta u^2|=0
\end{equation*}
and hence
\begin{equation*}
 \lim_{n\rightarrow \infty} \left|\left(\nu(\xi_0)^{-1}\nu(un^{-1/2}+\xi_0)e^{-i\alpha un^{-1/2}}\right)^n-e^{-\beta u^2}\right|=0.
\end{equation*}
An appeal to the bounded convergence theorem guarantees that for sufficiently large $n$, 
\begin{eqnarray*}
 \mathcal{I}_1&=&\left|\int_{|u|\leq M}\left([\nu(\xi_0)^{-1}\nu(un^{-1/2}+\xi_0)e^{-i\alpha un^{-1/2}}]^n-e^{-\beta u^2}\right)e^{-ixu}du\right|\\
&\leq& \int_{-M}^{M}\left|\left(\nu(\xi_0)^{-1}\nu(un^{-1/2}+\xi_0)e^{-i\alpha un^{-1/2}}\right)^n-e^{-\beta u^2}\right|du<\epsilon/4
\end{eqnarray*}
for all $x\in \mathbb{R}$ and in particular for all $x\in K$. Finally, from the above arguments we choose $\delta>0$ and a natural number $N$ so that for each $j=1,2,3,4$, $\mathcal{I}_j<\epsilon/4$ for all $n>N$ and for all $x\in K$. Putting these estimates together shows that $\mathcal{D}<\epsilon$ as desired. 
\end{proof}

\begin{lemma}\label{hardlemma2m}
Let $\nu:\mathbb{R}\rightarrow \mathbb{C}$ be analytic on a neighborhood of a point $\xi_0$ such that $|\nu(\xi_0)|=1$. Let $m>2$ and assume that $\xi_0\sim(2;m)$ with associated constants $\alpha$ and $\beta$. Then for all $\epsilon>0$ there is a $\delta>0$ and a natural number $N$ such that 
 \begin{equation}
\left|\frac{n^{1/m}}{2\pi}\int_{|\xi-\xi_0|\leq\delta}\nu(\xi)^n
e^{-ix\xi}d\xi-e^{-ix\xi_0}\nu(\xi_0)^n H_{m}^{\beta}\left(\frac{x-\alpha
n}{n^{1/m}}\right)\right|<\epsilon
\end{equation}
for all $n>N$ and for all real numbers $x$.
\end{lemma}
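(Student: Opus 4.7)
The plan is to imitate the proof of Lemma \ref{easylemma} but replace the $f_n'$ bounds used in Lemma \ref{hardlemma22} (which depend on $x$) by $f_n''$ bounds that are $x$-independent; this is permissible exactly because $m>2$ and yields uniformity on all of $\mathbb{R}$. Set $y_n=(x-\alpha n)n^{-1/m}$ and $h_n(u)=n\Gamma(un^{-1/m})-i\alpha un^{1-1/m}$, which is independent of $x$. Changing variables $\xi\mapsto un^{-1/m}$ in the lemma's integral and using $|\nu(\xi_0)|=1$, the difference to be estimated is bounded up to the factor $2\pi$ by $\mathcal{I}_1+\mathcal{I}_2+\mathcal{I}_3$, where
\begin{equation*}
\mathcal{I}_1 = \int_{|u|\leq M}|e^{h_n(u)}-e^{-\beta u^m}|\,du, \qquad \mathcal{I}_2 = \Big|\int_{M<|u|\leq\delta n^{1/m}}e^{h_n(u)-iy_n u}\,du\Big|,
\end{equation*}
\begin{equation*}
\mathcal{I}_3 = \Big|\int_{|u|>M}e^{-\beta u^m-iy_n u}\,du\Big|,
\end{equation*}
with $M>0$ to be chosen later; each $\mathcal{I}_j$ depends on $x$ only through $y_n$.

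The tail $\mathcal{I}_3$ is handled exactly as in the proof of item 2 of Proposition \ref{Hconverge}: since $m>2$ and $\beta=ip(0)$ is purely imaginary, the phase of the integrand has second derivative bounded below by a constant multiple of $M^{m-2}$ on $\{|u|\geq M\}$, so Lemma \ref{fdoubleprimebound} yields $\mathcal{I}_3\leq C_3 M^{-(m-2)/2}$ uniformly in $y_n$. The middle integral $\mathcal{I}_2$ is the main obstacle. Reversing the substitution, $\mathcal{I}_2 = n^{1/m}|\int_{Mn^{-1/m}<|\xi|\leq\delta}g(\xi)^n e^{if_n(\xi,x)}\,d\xi|$, where $g$ and $f_n$ are as in the proof of Lemma \ref{bigolem}. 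The crucial observation is that $\partial_\xi^2 f_n(\xi,x)=-n\tfrac{d^2}{d\xi^2}[\xi^m p(\xi)]$ is independent of $x$; after shrinking $\delta$ so that $|\tfrac{d^2}{d\xi^2}(\xi^m p(\xi))|\geq c^2|\xi|^{m-2}$, one has $|\partial_\xi^2 f_n|\geq c^2 M^{m-2}n^{2/m}$ on the middle region. Applying Lemma \ref{minbound} with $\|g^n\|_\infty\leq 1$ and a uniform bound on $\|(g^n)'\|_1$ (derived from \eqref{type2bound} and \eqref{type2derbound} exactly as in Lemma \ref{bigolem}) yields $\mathcal{I}_2\leq C_2 M^{-(m-2)/2}$, independent of both $x$ and $n$. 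Hence $\mathcal{I}_2$ and $\mathcal{I}_3$ are each less than $\epsilon/3$ once $M$ is chosen large enough.

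With $M$ now fixed, $\mathcal{I}_1$ is dispatched by the bounded convergence theorem, exactly as in the proof of Lemma \ref{easylemma}. The type-2 expansion \eqref{type2} of $\Gamma$ together with $k>m$ gives $h_n(u)\to -ip(0)u^m=-\beta u^m$ pointwise on $[-M,M]$ as $n\to\infty$, and the bound $\Re\Gamma(\xi)\leq 0$ for small $|\xi|$ (which holds because the leading real part of $\Gamma$ is $-\gamma\xi^k$ with $\gamma>0$ and $k$ even) yields $|e^{h_n(u)}|\leq 1$ there; the integrand in $\mathcal{I}_1$ is therefore dominated by $2$. Dominated convergence gives $\mathcal{I}_1\to 0$ with a rate independent of $x$, so $\mathcal{I}_1<\epsilon/3$ for all sufficiently large $n$. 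Combining the three bounds completes the proof.
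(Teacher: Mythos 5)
Your proposal is correct and follows essentially the same approach as the paper's proof: after the rescaling $\xi\mapsto un^{-1/m}$ it uses the $x$-independence of $\partial_\xi^2 f_n$ (available because $m>2$) together with Lemma \ref{minbound} for the middle region, Proposition \ref{Hconverge} (item 2) for the tail, and bounded convergence on the fixed compact center. The only cosmetic differences are that you merge the paper's $\mathcal{I}_3$ and $\mathcal{I}_4$ into a single middle term and stay in the $u$-variable throughout, neither of which alters the substance of the argument.
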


\noindent The present lemma's proof is analogous to the proof of the previous lemmas in many ways. We will consequently spend less time explaining the order in which we choose our constants.
\begin{proof}
Let $\epsilon>0$ and set
\begin{equation}
 y_n=\frac{x-\alpha n}{n^{1/m}}
\end{equation}
and 
\begin{equation*}
 \mathcal{D}=\left|\frac{n^{1/m}}{2\pi}\int_{|\xi-\xi_0|\leq\delta}\nu(\xi)^n
e^{-ix\xi}d\xi-e^{-ix\xi_0}\nu(\xi_0)^n H_{m}^{\beta}\left(\frac{x-\alpha
n}{n^{1/m}}\right)\right|.
\end{equation*}

Since $\xi_0\sim(2;m)$, we choose $\delta>0$ so that the estimates \eqref{type2approx}, \eqref{type2bound} and \eqref{type2derbound} of Lemma \ref{typeslemma} are valid, and the inequality
\begin{equation}\label{inf2m}
 \left|\frac{d^2}{d\xi^2}\xi^m p(\xi)\right|\geq B^2|\xi|^{m-2}
\end{equation}
holds for all $-\delta\leq\xi\leq\delta$, where $B>0$. Using Proposition \ref{Hconverge}, we now choose $M>0$ such that
\begin{equation}\label{tail2m}
\left|\int_{|u|>M}e^{-iyu-\beta u^m}du\right|<\epsilon/4
\end{equation}
for all $y\in\mathbb{R}$ and
\begin{equation}
 \frac{8}{BM^{m/2-1}}\left(1+\int_{\mathbb{R}}D|u|^{k-1}e^{-Cu^k}du\right)<\epsilon/4,
\end{equation}
where $B$ was defined above and $C$ and $D$ are the constants appearing in \eqref{type2derbound}. As in the last lemma, we write
\begin{eqnarray*}
\mathcal{D}&=&\left|\frac{n^{1/m}}{2\pi}\int_{-\delta}^{\delta}\nu(\xi+\xi_0)^n
e^{-ix(\xi+\xi_0)}d\xi-e^{-ix\xi_0}\nu(\xi_0)^n H_{m}^{\beta}(y)\right|\\
&\leq& \left|\int_{|u|\leq M}\left([\nu(\xi_0)^{-1}\nu(un^{-1/m}+\xi_0)e^{-\alpha un^{-1/m}}]^n-e^{-\beta u^m}\right)e^{-iyu}du\right|\\
&&+\left|\int_{|u|>M}e^{-iyu-\beta u^m}du\right|+n^{1/m}\left|\int_{Mn^{-1/m}}^{\delta}[\nu(\xi_0)^{-1}\nu(\xi+\xi_0)]^n
e^{-ix\xi}d\xi\right|\\
&&+n^{1/m}\left|\int_{-\delta}^{-Mn^{-1/m}}[\nu(\xi_0)^{-1}\nu(\xi+\xi_0)]^n
e^{-ix\xi}d\xi\right|\\
&&=:\mathcal{I}_1+\mathcal{I}_2+\mathcal{I}_3+\mathcal{I}_4.
\end{eqnarray*}
Now we estimate the terms $\mathcal{I}_j$ for $j=1,2,3,4.$ Already from \eqref{tail2m}, we know that $\mathcal{I}_2<\epsilon/4$ for all $x\in\mathbb{R}$. We have
\begin{eqnarray*}
\mathcal{I}_3&=&n^{1/m}\left|\int_{Mn^{-1/m}}^{\delta}[\nu(\xi_0)^{-1}\nu(\xi+\xi_0)e^{-i\alpha\xi+i\xi^m p(\xi)}]^n
e^{-i(x+\alpha n)\xi-in\xi^m p(\xi)}d\xi\right|\\
&=&n^{1/m}\left|\int_{Mn^{-1/m}}^{\delta}g(\xi)^ne^{if_n(\xi)}d\xi\right|,
\end{eqnarray*}
where
\begin{equation*}
 g(\xi)=[\nu(\xi_0)^{-1}\nu(\xi+\xi_0)e^{-i\alpha\xi+i\xi^m p(\xi)}]
\end{equation*}
and 
\begin{equation*}
f_n(\xi)=-[(x+\alpha n)\xi-in\xi^m p(\xi)].
\end{equation*}
With the aim of applying Lemma \ref{minbound}, we use \eqref{inf2m} and observe that on the interval $[Mn^{-1/m},\delta]$
\begin{equation*}
\inf|f_n''(\xi)|\geq\inf nB^2|\xi|^{m-2}\geq nB^2 |Mn^{-1/m}|^{m-2}=(n^{1/m}B M^{m/2-1})^2>0.
\end{equation*}
The application of the lemma is therefore justified and we can use \eqref{type2bound} and \eqref{type2derbound} to see that
\begin{eqnarray*}
\mathcal{I}_3&\leq& \frac{8n^{1/m}}{\sqrt{(n^{1/m}B M^{m/2-1})^2}}(\|g^n\|_{\infty}+\|ng'g^{n-1}\|_1)\\
&\leq& \frac{8}{CM^{m/2-1}}\left(1+\int_{\mathbb{R}}D|u|^{k-1}e^{-Cu^k}du\right)<\epsilon/4
\end{eqnarray*}
for all $x\in \mathbb{R}$. A similar calculation gives the same estimate for $\mathcal{I}_4$. 

To estimate $\mathcal{I}_1$, we essentially repeat the argument given in the proof of the previous lemma. Again, the integrand is bounded in absolute value by $2$ for all $n$. Using \eqref{type2approx}, we observe that for any $u\in[-M,M]$
\begin{equation*}
 \lim_{n\rightarrow \infty} \left|\left(\nu(\xi_0)^{-1}\nu(un^{-1/m}+\xi_0)e^{-\alpha un^{-1/m}}\right)^n-e^{-\beta u^m}\right|=0.
\end{equation*}
Therefore, the bounded convergence theorem gives a natural number $N$ for which $\mathcal{I}_1\leq \epsilon/4$ for all $n>N$ and for all $x\in \mathbb{R}$. Combining our estimates finishes the proof.
\end{proof}

\noindent For the remainder of this section, we focus on local limit theorems. The first theorem, Theorem \ref{weakresult22}, focuses on the case in which $\phi^{(n)}$ is approximated locally on its packets by linear combinations of the attractors $H_2^{\beta}$. The second theorem, Theorem \ref{stongestresult}, isolates the second conclusion of Theorem \ref{mainstrong}. The results of both theorems are then used to prove Theorem \ref{mainweak}. 

\begin{theorem}\label{weakresult22}
Let $\phi:\mathbb{Z}\rightarrow\mathbb{C}$ have admissible support and suppose that $\sup_{\xi}|\hat{\phi}(\xi)|=1$. Under Convention \ref{constantsconvention}, suppose that $m=2$ and for some $q=1,2,\dots,R$, $\beta_q$ is purely imaginary. Then, to each $\alpha_q$, there exists subcollections $\xi_{j_1},\xi_{j_2},\dots,\xi_{j_{r(q)}}$ and $\beta_{j_1},\beta_{j_2},\dots,\beta_{j_{r(q)}}$, such that 
\begin{equation}\label{convolpower22}
\phi^{(n)}(\lfloor xn^{1/2}+\alpha_q n\rfloor)=\sum_{l=1}^{r(q)}n^{-1/2}e^{-i(\lfloor xn^{1/2}+\alpha_q
n\rfloor) \xi_{j_l}}\hat{\phi}(\xi_{j_l})^n H_2^{\beta_{j_l}}(x)+o(n^{-1/2})
\end{equation}  
uniformly on any compact set. 
\end{theorem}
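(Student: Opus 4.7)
\emph{Proof plan.} The strategy is to apply Fourier inversion, localize the integral to small neighborhoods of each $\xi_j \in \Omega$, and identify which neighborhoods contribute at order $n^{-1/2}$ when evaluated at $y := \lfloor xn^{1/2} + \alpha_q n\rfloor$. Since $m=2$ and $m_j \geq 2$ for every $\xi_j \in \Omega$, Convention \ref{constantsconvention} forces $m_j = 2$ for all $j$, so $R=Q$ and $\Omega = \{\xi_1, \ldots, \xi_R\}$. Choose disjoint closed intervals $I_j \subset T$ around each $\xi_j$ small enough that Lemmas \ref{easylemma} and \ref{hardlemma22} apply on $I_j$, and set $s := \sup\{|\hat\phi(\xi)| : \xi \in T \setminus \bigcup_j I_j\} < 1$. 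By \eqref{fouriertransformidentity},
\begin{equation*}
\phi^{(n)}(y) = \sum_{j=1}^{R} \frac{1}{2\pi}\int_{I_j}\hat{\phi}(\xi)^n e^{-iy\xi}\,d\xi + O(s^n),
\end{equation*}
with the remainder trivially absorbed into $o(n^{-1/2})$. The surviving subcollection will turn out to be $\{\xi_{j_l}\}_{l=1}^{r(q)} = \{\xi_j \in \Omega : \alpha_j = \alpha_q\}$.

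For each $j$ with $\alpha_j = \alpha_q$, I apply Lemma \ref{easylemma} if $\xi_j \sim (1;2)$ and Lemma \ref{hardlemma22} if $\xi_j \sim (2;2)$. Both yield
\begin{equation*}
\frac{1}{2\pi}\int_{I_j}\hat\phi(\xi)^n e^{-iy\xi}\,d\xi = n^{-1/2} e^{-iy\xi_j}\hat\phi(\xi_j)^n H_2^{\beta_j}\!\left(\frac{y - \alpha_j n}{n^{1/2}}\right) + o(n^{-1/2})
\end{equation*}
uniformly in $x \in K$, and since $(y - \alpha_j n)/n^{1/2} = x + O(n^{-1/2})$ when $\alpha_j = \alpha_q$, the continuity of the analytic function $H_2^{\beta_j}$ (Proposition \ref{Hanalytic}) lets me replace its argument by $x$ with another $o(n^{-1/2})$ error uniform on $K$, since $|\hat\phi(\xi_j)^n| = 1$. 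These are exactly the terms on the right-hand side of \eqref{convolpower22}.

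The remaining task is to show that for each $j$ with $\alpha_j \neq \alpha_q$ the $I_j$ integral is $o(n^{-1/2})$ uniformly on $K$. If $\xi_j \sim (1;2)$ then $\Re(\beta_j) > 0$, and Lemma \ref{easylemma} (uniform in $y \in \mathbb{R}$) produces the same formula as above; the argument $(y - \alpha_j n)/n^{1/2} = (\alpha_q - \alpha_j)n^{1/2} + x + O(n^{-1/2})$ grows like $n^{1/2}$, forcing the Gaussian \eqref{heatker} $H_2^{\beta_j}$ to be exponentially small, which is more than enough. If $\xi_j \sim (2;2)$, however, $\beta_j = ip(0)$ is purely imaginary, $|H_2^{\beta_j}|$ is constant, and this argument collapses; this is the central obstacle. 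To handle it I would revisit the setup of Lemma \ref{hardlemma22}: substituting $\xi = \xi_j + \eta$ rewrites the integral as
\begin{equation*}
\frac{e^{-iy\xi_j}\hat\phi(\xi_j)^n}{2\pi}\int_{|\eta|\leq\delta}g(\eta)^n e^{if_n(\eta)}\,d\eta,
\end{equation*}
with $g$ as in Lemma \ref{typeslemma} and $f_n(\eta) = (n\alpha_j - y)\eta - n\eta^2 p(\eta)$. Since $y = xn^{1/2} + \alpha_q n + O(1)$ and $\alpha_j \neq \alpha_q$, the linear coefficient satisfies $|n\alpha_j - y| \geq \tfrac12|\alpha_j - \alpha_q|n$ for $n$ large and $x \in K$; after shrinking $\delta$ so that the quadratic contribution $n\frac{d}{d\eta}(\eta^2 p(\eta))$ is dominated, $|f_n'(\eta)| \geq cn$ throughout $I_j$ uniformly in $x \in K$. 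Lemma \ref{minbound}, combined with $\|g^n\|_\infty \leq 1$ and the uniform-in-$n$ bound on $\|(g^n)'\|_1$ produced by \eqref{type2derbound} via the usual rescaling $u = n^{1/k}\eta$, then yields a bound of $O(1/n) = o(n^{-1/2})$. Summing the contributions across $j$ gives \eqref{convolpower22}. The central difficulty is precisely this last case, where the attractor offers no decay and one must instead exploit the rapid linear oscillation induced by the drift mismatch via van der Corput.
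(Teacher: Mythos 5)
Your proposal is correct and follows essentially the same route as the paper's proof: the same localization of the Fourier inversion integral to neighborhoods of each $\xi_j$, the same application of Lemmas \ref{easylemma} and \ref{hardlemma22} for the matching-drift points, and the same pair of mechanisms (Gaussian decay of $H_2^{\beta_j}$ for $\xi_j\sim(1;2)$, and a van der Corput lower bound on $|f_n'|$ via Lemma \ref{minbound} exploiting the drift mismatch for $\xi_j\sim(2;2)$) to discard the $\alpha_j\neq\alpha_q$ contributions, followed by uniform continuity of $H_2^{\beta_{j_l}}$ to pass to the floor. The only difference from the paper is organizational: you insert the floored argument from the start and absorb the resulting $O(n^{-1/2})$ shift via continuity term by term, whereas the paper works with the extension $\phi_{\mbox{\scriptsize e}}(n,\cdot)$ throughout and performs the floor correction once at the end.
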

\begin{proof}

Let $\epsilon>0$ and $K\subseteq\mathbb{R}$ be a compact set. In view of Convention \ref{constantsconvention}, it follows from our hypotheses that $Q=R$ and therefore $\Omega=\{\xi_1,\xi_2,\dots,\xi_R\}$.  We note that the corresponding drift constants $\alpha_1,\alpha_2,\dots,\alpha_R$ need not be distinct.

Let $\alpha_q$ be a member of the above collection and let $\{j_1,j_2,\dots,j_{r(q)}\}$ be the increasing subcollection  of $\{1,2,\dots, R\}$ for which $\alpha_{j_l}=\alpha_q$ for $l=1,2,\dots, r(q)$. Also, set $\Upsilon_q=\{1,2,\dots,R\}\setminus\{j_1,j_2,\dots,j_{r(q)}\}$. It is of course possible that $\Upsilon_q$ is empty. For example, it might be the case that $1=r(q)=R$ and, in this case, \eqref{convolpower22} consists only of the single attractor $H_2^{\beta_1}$. In fact, this is precisely the situation exemplified in the introduction in which $\phi$ was defined by \eqref{ex1def} (see also Subsection \ref{ex1}).

We divide $T$ into subintervals: For
$l=1,2,\dots,R$, define $I_l=[\xi_l-\delta_l,\xi_l+\delta_l]\subseteq T$ where $\delta_l>0$ are to be defined shortly; for now, let's require them to be sufficiently small to ensure that the intervals $I_l$, for $l=1,2,\dots,R$, are disjoint. In view of \eqref{phiextdef}, put $J=T\setminus\cup I_l$ and write
\begin{eqnarray}\label{split22}
\lefteqn{n^{1/2}\phi_{\mbox{\scriptsize{e}}}(n,xn^{1/2}+\alpha_q n)}\\
&=&\frac{n^{1/2}}{2\pi}\int_T
\hat{\phi}(\xi)^ne^{-i(xn^{1/2}+\alpha_q n)\xi}d\xi \nonumber\\
&=&\sum_{l=1}^{R}\frac{n^{1/2}}{2\pi}\int_{I_l}
\hat{\phi}(\xi)^ne^{-i(xn^{1/2}+\alpha_q n)\xi}d\xi \nonumber+\frac{n^{1/2}}{2\pi}\int_J \hat{\phi}(\xi)^ne^{-i(xn^{1/2}+\alpha_q
n)\xi}d\xi\nonumber\\
&=&\sum_{l=1}^{R}\mathcal{I}_l+\mathcal{E}.
\end{eqnarray}

We treat the integrals $\mathcal{I}_l$ in the two cases separately. First, we consider
$\mathcal{I}_{l}$ for $l\in\Upsilon_q$. Here we show that $\mathcal{I}_l$ can be made arbitrarily small (depending on $x$ and $n$) because $\alpha_q\neq \alpha_l$. If $\xi_l\sim(2;2)$, let $\gamma_l,k_l$ and $p_l(\xi)$ be associated as per Definition \ref{types}. We have 
\begin{eqnarray*}
|\mathcal{I}_l|&=&\left|\frac{n^{1/2}}{2\pi}\int_{I_l}
\hat{\phi}(\xi)^ne^{-i(xn^{1/2}+\alpha_q n)\xi}d\xi\right|\\
&=&\left|\frac{n^{1/2}e^{-i(xn^{1/2}-\alpha_q n)\xi_l}\hat{\phi}(\xi_l)^n}{2\pi
}\int_{|\xi|\leq\delta}\left[\hat{\phi}(\xi_l)^{-1}\hat{\phi}(\xi_l+\xi)\right]^n
e^{-i(xn^{1/2}+\alpha_q n)\xi}d\xi\right|\\
&\leq& n^{1/2}\left|\int_{|\xi|\leq\delta}g_l(\xi)^n e^{if_{n,l}(\xi)}d\xi\right|,\\
\end{eqnarray*}
where
\begin{equation*}
 g_l(\xi)=[\hat{\phi}^{-1}(\xi_l)\hat{\phi}(\xi_l+\xi)e^{-i\alpha_l\xi+i\xi^2p_l(\xi)}]
\end{equation*}
and
\begin{equation*}
 f_{n,l}(\xi)=-n[(xn^{-1/2}+\alpha_q-\alpha_l)\xi+\xi^2p_l(\xi)].
\end{equation*}
Now choose $\delta_l>0$ so that, on the interval $[-\delta_l,\delta_l]$,  $g_l(\xi)$ satisfies \eqref{type2bound} and \eqref{type2derbound} for some $C_l,D_l>0$, 
\begin{equation*}
 f_{n,l}''(\xi)=-n\frac{d^2}{d\xi^2}\xi^2p_l(\xi)\neq 0
\end{equation*}
and
\begin{equation}\label{estbq}
 B_l\leq \left|\alpha_l-\alpha_q-\frac{d}{d\xi}\xi^2p_l(\xi)\right|
\end{equation}
for some $B_l>0$. For the first property our choice of $\delta_l$ was made using Lemma \ref{typeslemma} and the assumption that $\xi_l\sim(2;2)$. For the second two properties we used that fact that $\xi^2p_l(\xi)$ is a polynomial with $2$ being the smallest power of its terms and $\alpha_l\neq\alpha_q$. We can therefore apply Lemma \ref{minbound}. This gives
\begin{eqnarray*}
|\mathcal{I}_l|&\leq& \frac{8n^{1/2}}{\inf_{\xi}|f_{n,l}'(\xi)|}(\|g_l\|_\infty+\|ng_l'g_l^{n-1}\|)\\
&\leq& \frac{8}{\inf_{\xi}|(x-n^{1/2}(\alpha_l-\alpha_q-\frac{d}{d\xi}\xi^2p_l(\xi))|}\left(1+\int_{\mathbb{R}}D_l|\xi|^{k_l}e^{-C_l\xi^{k_l}}d\xi\right)\\
&\leq& \frac{M_l}{\inf_{\xi}|(x-n^{1/2}(\alpha_l-\alpha_q-\frac{d}{d\xi}\xi^2p_l(\xi))|}\\
\end{eqnarray*}
for some $M_l>0$ and where the above infima are taken over the interval $[-\delta_l,\delta_l]$. Using the estimate \eqref{estbq} and recalling that $x$ lives inside the compact set $K$, we can choose a natural number $N_l$ so that
\begin{equation*}
 \inf_{\xi}|(x-n^{1/2}(\alpha_l-\alpha_q-\frac{d}{d\xi}\xi^2p_l(\xi))|>\frac{M_l (R+1)}{\epsilon}
\end{equation*}
for all $n>N_l$ and for all $x\in K$. Consequently, 
\begin{equation}\label{QR122}
 |\mathcal{I}_l|\leq \frac{M_l}{M_l (R+1)/\epsilon}=\epsilon/(R+1)
\end{equation}
for all $n>N_l$ and for all $x\in K$.

If instead $\xi_l\sim(1;2)$, by an appeal to Lemma \ref{easylemma}, we choose $\delta_l>0$ and a natural number $N_l$ so that
\begin{eqnarray*}
|\mathcal{I}_l|&\leq &\epsilon/{2(R+1)}+|e^{-i(xn^{1/2}+\alpha_q
n)\xi_l}\hat\phi(\xi_l)^n H_2^{\beta_l}(x+(\alpha_q-\alpha_l)n^{1/2})|\\
&\leq&\epsilon/{2(R+1)}+|H_2^{\beta_l}(x+(\alpha_q-\alpha_l)n^{1/2})|\\
\end{eqnarray*}
for all $n>N_l$ and for all $x\in \mathbb{R}$. However, as we remarked earlier $H_2^{\beta_l}$ is the heat kernel evaluated at
complex time $\beta_l$. Since $\Re(\beta_l)>0$ in this case and
$\alpha_q\neq\alpha_l$ we may increase our natural number $N_l$ to ensure that 
\begin{equation*}
 |H_2^{\beta_l}(x+(\alpha_q-\alpha_l)n^{1/2})|\leq \epsilon/ {2(R+1)}
\end{equation*}
for any $n>N_l$ and for all $x$ in the compact set $K$. These estimates together give 
\begin{equation}\label{R1R22}
 |\mathcal{I}_l|\leq \epsilon/ {2(R+1)}+\epsilon/ {2(R+1)} =\epsilon/(R+1)
\end{equation}
for all $n>N_l$ and for all $x\in K$.

In the remaining estimates of $\mathcal{I}_l$ for $l=j_1,j_2,\dots,j_{r(q)}$, we recall that $\alpha_q=\alpha_l$. If $\xi_l\sim(2;2)$, we appeal to Lemma \ref{hardlemma22}. From this we choose $\delta_l>0$ and a natural number $N_l$ such that
\begin{equation}\label{1Q122}
|\mathcal{I}_l-e^{-i(xn^{1/2}+\alpha_q
n)\xi_l}\hat{\phi}(\xi_l)^n H_2^{\beta_l}(x)|\leq \epsilon/(R+1)
\end{equation}
for all $n>N_l$ and for all $x\in K$. If instead $\xi_l\sim(1;2)$, we appeal to Lemma \ref{easylemma}
and chose $\delta_l>0$ and $N_l$, a natural number, such that
\begin{equation}\label{QQ122}
|\mathcal{I}_l-e^{-i(xn^{1/2}+\alpha_q
n)\xi_l}\hat{\phi}(\xi_l)^n H_2^{\beta_l}(x)|\leq \epsilon/(R+1)
\end{equation}
for all $n>N_l$ and for all $x\in \mathbb{R}$. In particular we have this estimate uniform for all $x\in K$.

After fixing our collection of $\delta_l$'s in the above arguments, the set $J$ becomes fixed. We therefore set $s=\sup_{\xi\in J}|\hat{\phi}(\xi)|<1$ and note that $|\mathcal{E}|\leq
n^{1/2}s^n$. Thus we may choose a natural number $N_0$ such that $|\mathcal{E}|<\epsilon/(R+1)$ for all $n>N_0$ and for all $x\in K$.

At last, we choose $N$ to be the maximum of $N_l$ for $l=0,1,\dots, R$. Combining the estimates \eqref{split22}, \eqref{QR122}, \eqref{R1R22}, \eqref{1Q122} and \eqref{QQ122} yields
\begin{eqnarray*}
 \lefteqn{\left|n^{1/2}\phi_{\mbox{\scriptsize{e}}}(n,xn^{1/2}+\alpha_q n)-\sum_{l\in\{j_1,j_2\dots,j_{r(q)}\}}e^{-i(xn^{1/2}+\alpha_q
n)\xi_l}\hat{\phi}(\xi_l)^n H_2^{\beta_l}(x)\right|}\\
&\leq&\sum_{l\in\{j_1,j_2,\dots,j_{r(q)}\}}\left|\mathcal{I}_l-e^{-i(xn^{1/2}+\alpha_q
n)\xi_l}\hat{\phi}(\xi_l)^n H_2^{\beta_l}(x)\right|+\sum_{l\in\Upsilon_q}|\mathcal{I}_l|+\mathcal{E}\\
&<& \frac{(R+1)\epsilon}{R+1}=\epsilon
\end{eqnarray*}
for any $n>N$ and for all $x\in K$. We have shown that
\begin{equation}\label{tildephiweak}
\phi_{\mbox{\scriptsize{e}}}(n, xn^{1/2}+\alpha_q n)=\sum_{l=1}^{r(q)}n^{-1/2}e^{-i( xn^{1/2}+\alpha_q
n) \xi_{j_l}}\hat{\phi}(\xi_{j_l})^n H_2^{\beta_{j_l}}(x)+o(n^{-1/2})
\end{equation}
uniformly for $x$ in any compact set $K$. 

To complete the proof of the theorem we need to replace the argument $xn^{1/2}+\alpha_q
n$ by an integer in \eqref{tildephiweak}; this is precisely where the floor function comes in. Let $K\subseteq \mathbb{R}$ be compact, set 
\begin{equation*}
y(x,n)=\frac{\lfloor \alpha_q n+xn^{1/2}\rfloor-\alpha_q n}{n^{1/2}},
\end{equation*}
and observe that $|x-y(n,x)|\leq n^{-1/2}$. Let $F\supseteq K$ be any compact set for which $y(x,n)\in F$ for all $x\in K$ and all natural numbers $n$. By Proposition \ref{Hanalytic}, each function $H_2^{\beta_{j_l}}$  is uniformly continuous on $F$ and therefore, for any $x\in K$, we have
\begin{eqnarray}\label{Hcontweak}\nonumber
\lefteqn{\sum_{l=1}^{r(q)}n^{-1/2}e^{-i( \lfloor xn^{1/2}+\alpha_q
n\rfloor) \xi_{j_l}}\hat{\phi}(\xi_{j_l})^n H_2^{\beta_{j_l}}(y(x,n))}\\
&=&\sum_{l=1}^{r(q)}n^{-1/2}e^{-i( \lfloor xn^{1/2}+\alpha_q
n\rfloor) \xi_{j_l}}\hat{\phi}(\xi_{j_l})^n H_2^{\beta_{j_l}}(x)+o(n^{-1/2}).
\end{eqnarray}
The result now follows from \eqref{tildephiweak}, \eqref{Hcontweak}  and the observation that 
\begin{equation*}
 \phi^{(n)}(\lfloor xn^{1/2}+\alpha_q
n\rfloor)=\phi_{\mbox{\scriptsize{e}}}(n,\lfloor xn^{1/2}+\alpha_q n\rfloor).
\end{equation*}
\end{proof}

\begin{theorem}\label{stongestresult}
Let $\phi:\mathbb{Z}\rightarrow\mathbb{C}$ have admissible support and suppose that $\sup|\hat{\phi}(\xi)|=1$. Under Convention \ref{constantsconvention}, additionally assume that $m>2$ or $\Re(\beta_q)>0$ for all $q=1,2,\dots, R$ (this precisely the hypothesis \eqref{mainstronghypoth} of Theorem \ref{mainstrong}). Then
\begin{equation}
\phi^{(n)}(x)=\sum_{q=1}^{R}n^{-1/m}e^{-ix\xi_q}\hat{\phi}(\xi_q)^n H_m^{\beta_q}\left(\frac{x-\alpha_qn}{n^{1/m}}\right)+o(n^{-1/m})
\end{equation}  
uniformly in $\mathbb{Z}$.
\end{theorem}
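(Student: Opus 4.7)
The plan is to parallel the argument given for Theorem \ref{weakresult22}, but to exploit the strengthened hypothesis \eqref{mainstronghypoth} to avoid any restriction to compact sets. Writing $\phi^{(n)}(x) = \phi_{\mbox{\scriptsize{e}}}(n,x)$ for $x \in \mathbb{Z}$ via the Fourier inversion formula \eqref{fouriertransformidentity}, I would decompose the integration domain $T$ as $\bigl(\bigcup_{q=1}^{Q} I_q\bigr) \cup J$, where the $I_q = [\xi_q - \delta_q, \xi_q + \delta_q]$ are small disjoint neighborhoods of the points of $\Omega$ and $J$ is the remainder, chosen so that $s := \sup_{\xi \in J}|\hat\phi(\xi)| < 1$. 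The contribution of $J$ is controlled trivially by $s^n$, which is $o(n^{-1/m})$ uniformly in $x$.

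Each $I_q$ is then handled by a previously established lemma, dictated by the type and order of $\xi_q$. For $q > R$, i.e.\ $m_q < m$, Lemma \ref{bigolem} already bounds the contribution of $I_q$ by $O(n^{-1/m_q})$ uniformly in $x$, and since $1/m_q > 1/m$ this is $o(n^{-1/m})$. For $q \leq R$, where $m_q = m$, there are two subcases: if $\xi_q \sim (1;m)$ I apply Lemma \ref{easylemma} to approximate the rescaled integral by $e^{-ix\xi_q}\hat\phi(\xi_q)^n H_m^{\beta_q}\bigl((x-\alpha_q n)/n^{1/m}\bigr)$ within $o(n^{-1/m})$; if instead $\xi_q \sim (2;m)$, then necessarily $m > 2$ under \eqref{mainstronghypoth}, and I invoke Lemma \ref{hardlemma2m} to obtain the same approximation with an error of the same order. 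Summing these contributions over $q = 1, \dots, R$ yields the asserted identity \eqref{mainstrongconvergenceconclusion}.

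The critical point — and the one that the hypothesis \eqref{mainstronghypoth} is designed to secure — is uniformity in $x \in \mathbb{Z}$. Both Lemma \ref{easylemma} and Lemma \ref{hardlemma2m} produce errors that are uniform for all real $x$, in contrast with Lemma \ref{hardlemma22}, whose error is uniform only on compact sets. The hypothesis rules out precisely the scenario in which Lemma \ref{hardlemma22} would be required: if $m = 2$, then for every $q \leq R$ one has $\Re(\beta_q) > 0$, which by Definition \ref{types} forces $\xi_q \sim (1;2)$, and Lemma \ref{easylemma} applies. Likewise the bound from Lemma \ref{bigolem} and the estimate on $J$ are uniform in $x$, so combining the pieces preserves the uniformity.

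I do not expect any deep obstacle beyond bookkeeping: the hard analytic work has already been carried out in Sections \ref{localphisec}--\ref{attractorsec} and in Lemmas \ref{easylemma}--\ref{hardlemma2m}. The subtlest issue is simply to verify that each absorbed error remains $o(n^{-1/m})$ with constants independent of $x$. Because the invoked lemmas supply, for each $\epsilon>0$, a single $\delta$ and $N$ depending only on $\epsilon$ (not on $x$), a finite aggregation of the $R+Q-R+1$ pieces preserves the uniform-in-$x$ conclusion and delivers \eqref{mainstrongconvergenceconclusion} exactly.
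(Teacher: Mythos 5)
Your proposal follows the paper's argument essentially step for step: the same decomposition of $T$ into the intervals $I_q$ around the points of $\Omega$ plus the remainder $J$, the same invocation of Lemma \ref{bigolem} for $q>R$, Lemmas \ref{easylemma} and \ref{hardlemma2m} for $q\le R$, the same exponential bound on $J$, and the same observation that hypothesis \eqref{mainstronghypoth} is exactly what excludes the need for Lemma \ref{hardlemma22} and hence preserves uniformity in $x\in\mathbb{Z}$. This is correct and matches the paper's proof.
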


\begin{proof}
In view of Proposition \ref{typesprop} and under Convention \ref{constantsconvention}, our hypotheses guarantee that either $m>2$ or, in the case that $m=2$, $\xi_q\sim(1;2)$ for each $\xi_q\in\Omega$. Consequently to each point $\xi_q\in\Omega$ of order $m$ we may apply either Lemma \ref{easylemma} or Lemma \ref{hardlemma2m}.

As in the proof of the previous theorem we divide $T$ into subintervals. For
$q=1,2,\dots,Q$, let $I_q=[\xi_q-\delta_q,\xi_q+\delta_q]$ for values of $\delta_q>0$ to be chosen later (but small enough to ensure that the $I_q$'s are disjoint) and set $J=T\setminus\cup I_q$. We again define $\phi_{\mbox{\scriptsize{e}}}$ by \eqref{phiextdef} and write
\begin{eqnarray*}
n^{1/m}\phi_{\mbox{\scriptsize{e}}}(n,x)&=&\frac{n^{1/m}}{2\pi}\int_T
\hat{\phi}(\xi)^ne^{-ix\xi}d\xi \\
&=&\sum_{q=1}^{Q}\frac{n^{1/m}}{2\pi}\int_{I_q}
\hat{\phi}(\xi)^ne^{-ix\xi}d\xi+\frac{n^{1/m}}{2\pi}\int_J \hat{\phi}(\xi)^ne^{-ix\xi}d\xi.\\
\end{eqnarray*}
Therefore,
\begin{eqnarray}\label{splitm}\nonumber
 \lefteqn{\left|n^{1/m}\phi_{\mbox{\scriptsize{e}}}(n,x)-\sum_{q=1}^{R}e^{-ix\xi_q}\hat{\phi}(\xi_q)^n H_m^{\beta_q}\left(\frac{x-\alpha_qn}{n^{1/m}}\right)\right|}\\\nonumber
&\leq& \sum_{q=1}^{R}\left|\frac{n^{1/m}}{2\pi}\int_{I_q}
\hat{\phi}(\xi)^ne^{-ix\xi}d\xi-e^{-ix\xi_q}\hat{\phi}(\xi_q)^n H_m^{\beta_q}\left(\frac{x-\alpha_qn}{n^{1/m}}\right)\right| \\
&&+\sum_{q=R+1}^{Q} n^{1/m}\left|\frac{1}{2\pi}\int_{I_q}
\hat{\phi}(\xi)^ne^{-ix\xi}d\xi\right|+\left|\frac{n^{1/m}}{2\pi}\int_J \hat{\phi}(\xi)^ne^{-ix\xi}d\xi\right|.
\end{eqnarray}

As we previously noted, for $q=1,2,\dots,R$, we apply either Lemma \ref{easylemma} or Lemma \ref{hardlemma2m}. We can therefore choose a natural number $N_q$ and fix $\delta_q>0$ so that 
\begin{equation}\label{estdiffm}
 \left|\frac{n^{1/m}}{2\pi}\int_{I_q}
\hat{\phi}(\xi)^ne^{-ix\xi}d\xi-e^{-ix\xi_q}\hat{\phi}(\xi_q)^n H_m^{\beta_q}\left(\frac{x-\alpha_qn}{n^{1/m}}\right)\right|<\frac{\epsilon}{(Q+1)}
\end{equation} for all $n>N_q$ and for all $x\in\mathbb{R}$.

In the case that $q=R+1,R+2,\dots, Q$, we appeal to Lemma \ref{bigolem} and choose $\delta_q>0$ and a natural number $N_q$ such that
\begin{equation*}
 \left|\frac{1}{2\pi}\int_{I_q}
\hat{\phi}(\xi)^ne^{-ix\xi}d\xi\right|\leq \frac{C_q}{n^{1/m_q}}
\end{equation*}
for some $C_q>0$ and for all $n>N_q$ and $x\in \mathbb{R}$. Using the fact that $m>m_q$ we can adjust the value of $N_q$ so that
\begin{equation}\label{estmqm}
n^{1/m}\left|\frac{1}{2\pi}\int_{I_q}
\hat{\phi}(\xi)^ne^{-ix\xi}d\xi\right|\leq \frac{C_q}{n^{1/m_q-1/m}}<\frac{\epsilon}{(Q+1)}
\end{equation}
for all $n>N_q$ and for all $x\in\mathbb{R}$.

Finally, as in the proof of the last theorem, we set $s=\inf_J |\hat{\phi}|<1$ and observe that the last term in \eqref{splitm} is bounded by $n^{1/m}s^n$. We therefore select a natural number $N_0$ such that
\begin{equation}\label{estJ2m}
 \left|\frac{n^{1/2}}{2\pi}\int_J \hat{\phi}(\xi)^ne^{-ix\xi}d\xi\right|\leq n^{1/m}s^n<\frac{\epsilon}{(Q+1)}
\end{equation}
for all $n>N_0$ and for all $x\in\mathbb{R}$.

Let us choose $N$ to be the maximum $N_q$ for $q=0,1,\dots,Q$. Upon combining the estimates \eqref{estdiffm}, \eqref{estmqm}, \eqref{estJ2m} and \eqref{splitm} we have
\begin{equation}\label{strongresultextension}
  \left|n^{1/m}\phi_{\mbox{\scriptsize{e}}}(n,x)-\sum_{q=1}^{R}e^{-ix\xi_q}\hat{\phi}(\xi_q)^n H_m^{\beta_q}\left(\frac{x-\alpha_qn}{n^{1/m}}\right)\right|<\epsilon
\end{equation}
for all $n>N$ and for all $x\in \mathbb{R}$. In particular, \eqref{strongresultextension} holds for all $x\in\mathbb{Z}$ and for such $x$, $\phi_{\mbox{\scriptsize{e}}}(n,x)=\phi^{(n)}(x)$. This is our desired result.
\end{proof}

\begin{proof}[Proof of Theorem \ref{mainweak}]
Let $K$ be a compact set. Assuming that $\phi$ satisfies the hypotheses of the theorem, we adopt Convention \ref{constantsconvention} by virtue of Proposition \ref{typesprop}. There are two distinct possibilities pertaining to the constants $m$ and $\beta_1,\beta_2,\dots,\beta_R$: they satisfy the hypotheses of Theorem \ref{weakresult22} or they satisfy the hypotheses of Theorem \ref{stongestresult}. A moment's thought shows that the hypotheses of Theorem \ref{weakresult22} and the hypotheses of Theorem \ref{stongestresult} are indeed mutually exclusive and collectively exhaustive. If the case at hand is the former there is nothing to prove for $m=2$ and the desired result is precisely the conclusion of Theorem \ref{weakresult22}. We therefore address the latter case.  

Let $\alpha_q\in\{\alpha_1,\alpha_2,\dots,\alpha_R\}$ and, exactly as was done in the proof of Theorem \ref{weakresult22}, define $\{j_1,j_2,\dots,j_r(q)\}\subseteq\{1,2,\dots, R\}$ and $\Upsilon_q$.   Observe that \eqref{strongresultextension} is uniform in $\mathbb{R}$ and we can therefore write
\begin{eqnarray*}
\lefteqn{\phi_{\mbox{\scriptsize{e}}}(n,\alpha_qn+xn^{1/m})}\\
&=&\sum_{l=1}^{R}n^{-1/m}e^{-i(\alpha_qn+xn^{-1/m})\xi_l}\hat{\phi}(\xi_l)^n H_m^{\beta_l}\left(\frac{(\alpha_q-\alpha_l)n+xn^{1/m}}{n^{1/m}}\right)+o(n^{-1/m})\\
&=& \sum_{l\in\{j_1,j_2,\dots,j_{r(q)}\}}n^{-1/m}e^{-i(\alpha_qn+xn^{-1/m})\xi_l}\hat{\phi}(\xi_l)^n H_m^{\beta_l}(x)\\
&&+ \sum_{l\in\Upsilon_q}n^{-1/m}e^{-i(\alpha_qn+xn^{-1/m})\xi_l}\hat{\phi}(\xi_l)^n H_m^{\beta_l}((\alpha_q-\alpha_l)n^{1-1/m}+x)+o(n^{-1/m}).\\
&=&\sum_{l=1}^{r(q)}n^{-1/m}e^{-i(\alpha_qn+xn^{-1/m})\xi_{j_l}}\hat{\phi}(\xi_{j_l})^n H_m^{\beta_{j_l}}(x)+\sum_{l\in\Upsilon_q}S_l(n,x)+o(n^{-1/m}).
\end{eqnarray*}
Upon requiring $x\in K$, we consider the summands $S_l(n,x)$ for $l\in\Upsilon_q$. In the case that $\Re(\beta_l)>0$, we have
\begin{eqnarray*}
|S_l(n,x)|&=&|n^{-1/m}e^{-i(\alpha_qn+xn^{-1/m})\xi_l}\hat{\phi}(\xi_l)^n H_m^{\beta_l}((\alpha_q-\alpha_l)n^{1-1/m}+x)|\\
&=&n^{-1/m}|H_m^{\beta_l}((\alpha_q-\alpha_l)n^{1-1/m}+x))|\\
&\leq& n^{-1/m} C_l\exp(-B_l((\alpha_q-\alpha_l)n^{1-1/m}+x)^{m/(m-1)})=o(n^{-1/m}).
\end{eqnarray*}
If it is the case that $\Re(\beta_l)=0$, we must have $m>2$. Appealing to Proposition \ref{Hassymptoticprop}, we conclude that
\begin{eqnarray*}
\lefteqn{|S_l(n,x)|}\\
&\leq& n^{-1/m}\left(\frac{A}{|(\alpha_q-\alpha_l)n^{1-1/m}+x)|^{\frac{m-2}{2(m-1)}}}+\frac{B}{|(\alpha_q-\alpha_l)n^{1-1/m}+x)|}\right)\\
&=&o(n^{-1/m}).
\end{eqnarray*}
Combining the above estimates shows that, for all $x\in K$,
\begin{equation*}
\phi_{\mbox{\scriptsize{e}}}(n,\alpha_qn+xn^{1/m})=\sum_{l=1}^{r(q)}n^{-1/m}e^{-i(\alpha_qn+xn^{-1/m})\xi_{j_l}}\hat{\phi}(\xi_{j_l})^n H_m^{\beta_{j_l}}(x)
+o(n^{-1/m}).
\end{equation*}
To complete the proof, it remains to replace the argument $\alpha_qn+xn^{1/m}$ by the integer $\lfloor\alpha_qn+xn^{1/m}\rfloor$ in the equation above. This can be done easily by making an argument analogous to that given in the last paragraph of the proof to Theorem \ref{weakresult22}. From this, the desired result follows without trouble.
\end{proof}

\section{The lower bound of $\|\phi^{(n)}\|_{\infty}$}\label{lowerboundsec}

In this section we complete the proof of Theorem \ref{mainbound}. 
\begin{lemma}\label{vandermondelemma}
Let $\zeta_1,\zeta_2,\cdots,\zeta_r\in (-\pi,\pi]$ be distinct, let $B>0$ and define
\begin{equation}\label{vandermondematrix}
V=
\begin{pmatrix}
1 &  1  & \ldots & 1\\
e^{-i\zeta_1}  &  e^{-i\zeta_2} & \ldots & e^{-i\zeta_r}\\
e^{-i2\zeta_1}  &  e^{-i2\zeta_2} & \ldots & e^{-i2\zeta_r}\\
\vdots & \vdots & \ddots & \vdots\\
e^{-i(r-1)\zeta_1}  &  e^{-i(r-1)\zeta_2} & \ldots & e^{-i(r-1)\zeta_r}\\
\end{pmatrix}.
\end{equation}
Then there is a number $C>0$ such that for any $\rho,\sigma\in\mathbb{C}^r$ with $\|\rho\|>B$ and $\sigma=V\rho$, we have $|\sigma_j|>3C$ for some $j=1,2,\dots,r$. Here $\|\cdot\|$ denotes the usual norm on $\mathbb{C}^r$.
\end{lemma}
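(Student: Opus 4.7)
The plan is to exploit the fact that $V$ is essentially a Vandermonde matrix and therefore invertible, then pass from a norm bound to a coordinate bound via the equivalence of norms on $\mathbb{C}^r$.

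First I would observe that, since the $\zeta_k$ are distinct points in $(-\pi,\pi]$, the numbers $z_k := e^{-i\zeta_k}$ are distinct complex numbers on the unit circle. The matrix $V$ has entries $V_{jk} = z_k^{j-1}$, so it is the classical Vandermonde matrix in the nodes $z_1,\dots,z_r$. By the standard Vandermonde determinant formula,
\begin{equation*}
\det V = \prod_{1\leq j<k\leq r}(z_k-z_j)\neq 0,
\end{equation*}
so $V$ is invertible and its inverse $V^{-1}$ has a finite operator norm $\|V^{-1}\|_{\mathrm{op}}$ with respect to the standard Euclidean norm on $\mathbb{C}^r$.

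Next, for any $\rho,\sigma\in\mathbb{C}^r$ with $\sigma=V\rho$ and $\|\rho\|>B$, I would write $\rho = V^{-1}\sigma$ and use the operator norm bound to get
\begin{equation*}
B<\|\rho\|=\|V^{-1}\sigma\|\leq \|V^{-1}\|_{\mathrm{op}}\|\sigma\|,
\end{equation*}
hence $\|\sigma\|>B/\|V^{-1}\|_{\mathrm{op}}$. Since the Euclidean norm satisfies $\|\sigma\|\leq\sqrt{r}\,\max_{1\leq j\leq r}|\sigma_j|$, this forces
\begin{equation*}
\max_{1\leq j\leq r}|\sigma_j| > \frac{B}{\sqrt{r}\,\|V^{-1}\|_{\mathrm{op}}}.
\end{equation*}

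Finally, choosing any positive constant $C$ strictly less than $B/(3\sqrt{r}\,\|V^{-1}\|_{\mathrm{op}})$, for example $C=B/(4\sqrt{r}\,\|V^{-1}\|_{\mathrm{op}})$, yields $\max_j|\sigma_j| > 3C$, which is the conclusion. There is no real obstacle here; the only slightly delicate point is making sure the constant $C$ depends only on the $\zeta_k$ and $B$ (not on $\rho$ or $\sigma$), which is automatic once one has exhibited a single such constant via the fixed norm of $V^{-1}$.
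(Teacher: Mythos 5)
Your proof is correct and follows essentially the same route as the paper's: both identify $V$ as an invertible Vandermonde matrix via the determinant formula and then obtain the lower bound on $\max_j|\sigma_j|$ from the estimate $\|\rho\|\leq\|V^{-1}\|\,\|\sigma\|$. The only difference is that you spell out the equivalence-of-norms step $\|\sigma\|\leq\sqrt{r}\max_j|\sigma_j|$ and the explicit choice of $C$, which the paper leaves implicit.
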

\begin{proof}
The matrix V in \eqref{vandermondematrix} is known as Vandermonde's matrix. It is a routine exercise in linear algebra to show that
\begin{equation*}
\det(V)=\prod_{1\leq l<k\leq r} (e^{-i\zeta_k}-e^{-i\zeta_l}).
\end{equation*}
Noting that $e^{-i\zeta_1}, e^{-i\zeta_1}, \dots, e^{-i\zeta_r}$ are all distinct we conclude that $V$ is invertible. The proof now follows immediately from the estimate
\begin{equation*}
\|\rho\|\leq\|V^{-1}\|\|\sigma\|.
\end{equation*}
\end{proof}

\begin{proof}[Proof of Theorem \ref{mainbound}]
Let $\phi:\mathbb{Z}\rightarrow\mathbb{C}$ have admissible support. As Theorem
\ref{firsthalfmainbound} gave the upper bound
\begin{equation*}
A^{-n}\|\phi^{(n)}\|_{\infty}\leq C'n^{1/m}
\end{equation*}
for some $C'>0$, our job is establish the lower bound
\begin{equation*}
Cn^{1/m}\leq A^{-n}\|\phi^{(n)}\|_{\infty} 
\end{equation*}
for some $C>0$. This is done with the help of our local limit theorems.

As we noted in the proof of Theorem \ref{firsthalfmainbound}, it suffices to
assume that $A=\sup_{\xi}|\hat{\phi}(\xi)|=1$. We adopt Convention \ref{constantsconvention} by virtue of Proposition \ref{typesprop} and note that $m\geq2$, defined by \eqref{mdef}, is that which appears in both Theorem \ref{mainweak} and Theorem \ref{firsthalfmainbound}. In view of Theorem \ref{mainweak}, set $\alpha=\alpha_1$, $r=r(1)$ and correspondingly take $\xi_{j_1},\xi_{j_2},\dots,\xi_{j_{r}}\in(-\pi,\pi]$ and $\beta_{j_1},\beta_{j_2},\dots,\beta_{j_{r}}$ for which \eqref{mainweakconvergenceconclusion} holds. For notational convenience, set $b_l=\beta_{j_l}$ and $\zeta_l=\xi_{j_l}$ for $l=1,2,\dots, r$ and note that the points $\zeta_1,\zeta_2,\dots,\zeta_r\in(-\pi,\pi]$ are distinct. In this notation, \eqref{mainweakconvergenceconclusion} is the assertion that
\begin{equation}\label{lowerboundlimiteq}
 \phi^{(n)}(\lfloor\alpha n+xn^{1/m}\rfloor)=\sum_{l=1}^{r}n^{-1/m}e^{-i\lfloor\alpha n+xn^{1/m}\rfloor\zeta_l}\hat{\phi}(\zeta_l)^m H_m^{b_l}(x)+o(n^{-1/m})
\end{equation}
uniformly for $x$ in a compact set.

Appealing to Proposition \ref{Hanalytic}, we know that each function
$H_m^{b_1}$ is non-zero and continuous for $l=1,2,\dots, r$. In particular, there exists $B>0$ and an interval $I=[a,b]$ such that $|H_m^{b_1}(x)|\geq
B$ for all $x\in I$. Define $V$ by \eqref{vandermondematrix} and let $C>0$ as guaranteed by Lemma \ref{vandermondelemma}. Set
\begin{equation}
f(n,x)=\sum_{l=1}^{r}e^{-i(\alpha n+xn^{1/m})\zeta_l}\hat{\phi}(\zeta_l)^n
H_m^{b_l}(x)
\end{equation}
and
\begin{equation}
\sigma_k(n,x)=\sum_{l=1}^{r}e^{-ik\zeta_l}e^{-i(\alpha n+xn^{1/m})\zeta_l}\hat{\phi}(\zeta_l)^n
H_m^{b_l}(x)
\end{equation}
for $k=0,1,\dots,r-1$. Since each function $H_m^{b_l}$ is continuous on $\mathbb{R}$ it is uniformly continuous on $[a-r,b+r]\supseteq I$. Consequently, we may choose a natural number $N$ for which
\begin{equation}\label{fsigmarelation}
|f(n,x+kn^{-1/m})-\sigma_k(n,x)|<C
\end{equation}
for all $n\geq N$, $k=0,1,\dots,r-1$ and $x\in I$.  By possibly further increasing $N$ we can also guarantee that for any $n\geq N$ there is $x_0\in I$ such that
$\alpha n+x_0 n^{1/m} $ is an integer and for which $x_0+kn^{-1/m}\in I $ for
all $k=0,1,\dots, r-1$. We observe that for any such $k$,  $(\alpha n+(x_0+kn^{-1/m}) n^{1/m})$ is also an integer.

Now for any $n\geq N$, let $x_0\in I$ be as guaranteed in the previous paragraph. Observe that
\begin{eqnarray*}
\begin{pmatrix}
\sigma_0(n,x_0)\\
\sigma_1(n,x_0)\\
\vdots \\
\sigma_{(R-1)}(n,x_0)\\
\end{pmatrix}
&=&
\begin{pmatrix}
1 &  1  & \ldots & 1\\
e^{-i\zeta_1}  &  e^{-i\zeta_2} & \ldots & e^{-i\zeta_r}\\
e^{-i2\zeta_1}  &  e^{-i2\zeta_2} & \ldots & e^{-i2\zeta_r}\\
\vdots & \vdots & \ddots & \vdots\\
e^{-i(r-1)\zeta_1}  &  e^{-i(r-1)\zeta_2} & \ldots & e^{-i(r-1)\zeta_r}\\
\end{pmatrix}
\begin{pmatrix}
\rho_1(n,x_0) \\
\rho_2(n,x_0)\\
\vdots\\
\rho_r(n,x_0)\\
\end{pmatrix},\\
\end{eqnarray*}
where
\begin{equation*} 
\rho_l(x_0,n)=e^{-i(\alpha n+x_0n^{1/m})\zeta_l}\hat{\phi}(\zeta_l)^n
H_m^{b_l}(x_0)
\end{equation*}
for $l=1,2,\dots,r$. Because $x_0\in I$, $|\rho_1(x_0,n)|=|H_m^{b_1}(x_0)|>B$ and therefore
\begin{equation*}
\|(\rho_1(n,x_0),\rho_2(n,x_0),\dots,\rho_r(n,x_0))^{\top}\|>B.
\end{equation*}
Appealing to Lemma \ref{vandermondelemma}, there is some $k\in\{0,1,2,\dots,r-1\}$ such that \break$|\sigma_k(n,x_0)|>3C$ and so by \eqref{fsigmarelation}, $|f(n,x_0+kn^{-1/m})|>2C$.

We have shown that there is a natural number $N$, a closed interval $I$ and a constant $C>0$ such that for any $n\geq N$
\begin{equation}\label{supsumbound}
\sup\left|\sum_{l=1}^{r}e^{-i(\alpha n+xn^{1/m})\zeta_l}\hat{\phi}(\zeta_l)^n
H_m^{b_l}(x)\right|>2C,
\end{equation}
where the above supremum is taken over the set
\begin{equation*}
 \{x:x\in I\mbox{ and }(\alpha n+xn^{-1/m})\in \mathbb{Z}\}.
\end{equation*}
Combining \eqref{lowerboundlimiteq} and \eqref{supsumbound} we
conclude that
\begin{equation}\label{supphin}
\sup_{x\in\mathbb{Z}}|\phi^{(n)}(x)|\geq Cn^{-1/m}
\end{equation}
for all $n>N$. The result now follows from the observation that $\phi^{(n)}\neq
0$ for all $n\leq N$ and so, by possibly adjusting $C$, \eqref{supphin}
must hold for all $n$. 
\end{proof}

\section{Concentration of mass}\label{masssec}

\noindent In this section we complete the proof of Theorem \ref{mainstrong}. Recall that the theorem has two conclusions, the second of which is the subject of Theorem \ref{stongestresult} and was already shown in the previous section. The first conclusion, \eqref{mainstrongsetconclusion}, remains to be shown.

\begin{proof}[Proof of Theorem \ref{mainstrong}]
We assume that $\phi$ satisfies the hypotheses of the theorem. By Theorem \ref{stongestresult},
\begin{equation}\label{limittheoremconcentrationofmass}
\phi^{(n)}(x)=\sum_{q=1}^{R}n^{-1/m}e^{-ix\xi_q}\hat{\phi}(\xi_q)^n H_m^{\beta_q}\left(\frac{x-\alpha_qn}{n^{1/m}}\right)+o(n^{-1/m}),
\end{equation}
where the limit is uniform for $x\in \mathbb{Z}$ and the collections $\xi_1,\xi_2,\dots,\xi_R\in(-\pi,\pi]$, $\alpha_1,\alpha_2,\dots,\alpha_R$ and $\beta_1,\beta_2,\dots,\beta_R$ are those set by Convention \ref{constantsconvention}.

Using Theorem \ref{mainbound} we choose $C>0$ for which the estimate \eqref{mainboundsupeq} holds. Considering all possibilities of $\beta_q$ and $m$ above, we can choose $M>0$ such that
\begin{equation*}
|H_m^{\beta_q}(y)|< C/(2R)
\end{equation*}
for all $|y|> M$ and for all $q=1,2,\dots, R$. This can be done by using \eqref{Hassymptoticnicecase} or the conclusion of Proposition \ref{Hassymptoticprop}. Now let $K=[-M,M]$ and observe that, for any $q=1,2,\dots,R$, 
\begin{equation}\label{massconcentrationbound1}
 \left|n^{-1/m}e^{-ix\xi_q}\hat{\phi}(\xi_q)^n H_m^{\beta_q}\left(\frac{x-\alpha_qn}{n^{1/m}}\right)\right|<\frac{Cn^{-1/m}}{2R}
\end{equation}
whenever $(x-\alpha_q n)/n^{1/m}>M$ or equivalently $x\notin \alpha_q n+ Kn^{1/m}$.  Further, by combining \eqref{limittheoremconcentrationofmass} and \eqref{massconcentrationbound1} there is some natural number $N$ such that
\begin{equation*}
|\phi^{(n)}(x)|< C  n^{-1/m}
\end{equation*}
for all $x\notin \cup_q(\alpha_q n+ Kn^{1/m})$ and $n>N$. Thus by Theorem \ref{mainbound}, the supremum $\|\phi^{(n)}\|_{\infty}$ must be attained on the set $(\cup_q(\alpha_q n+ Kn^{1/m}))\cap\mathbb{Z}$ for all $n>N$. Lastly, observe that by enlarging the compact set $K$, the above dependence on $N$ can be removed. This completes the proof.

\end{proof}

\section{Examples}\label{exsec}
In this final section, we consider three examples to illustrate our results. We begin by considering a complex valued function on $\mathbb{Z}$ whose convolution powers consist of two waves drifting apart. This example cannot be treated by the results of Schoenberg, Greville or Thom\'{e}e. 

\subsection{Two Airy functions with drift}\label{airyex}

Consider the function $\phi:\mathbb{Z}\rightarrow\mathbb{C}$ defined by
\begin{equation*}
\phi(0)=\frac{3}{8}\hspace{.5cm}\phi(\pm 2)=-\frac{1}{4}\hspace{.5cm}\phi(\pm 3)=\frac{i}{3}\hspace{.5cm}\phi(\pm 4)=\frac{1}{16}
\end{equation*}
and $\phi(x)=0$ otherwise. The convolution powers, $\phi^{(n)}$, exhibit two distinct packets drifting apart, each with a rate of $2n$ from $x=0$. Figure \ref{fig:ex3_waves} illustrates this behavior. 

\begin{figure}[h!]
\centering\includegraphics[width=4.7in]{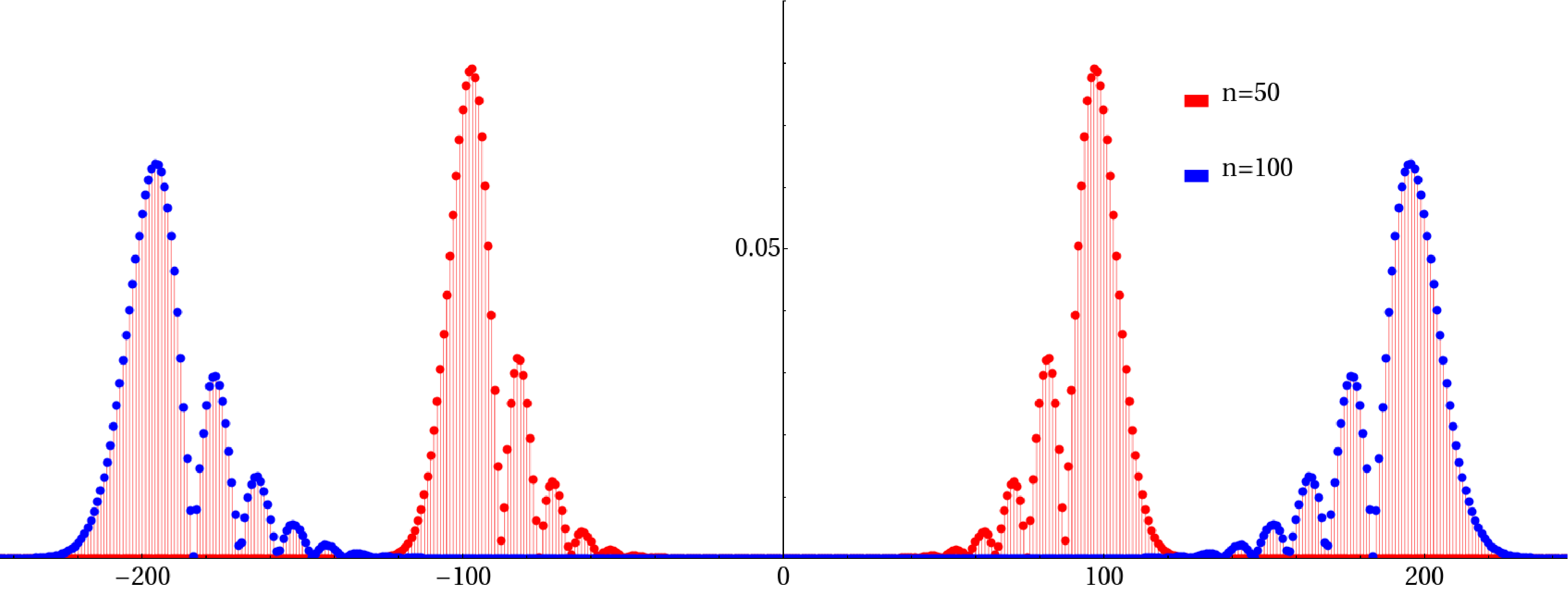}
\caption{$|\phi^{(n)}|$ for $n=50,100$}
\label{fig:ex3_waves}
\end{figure}

\noindent The Fourier transform of $\phi$ is given by
\begin{equation*}
\hat{\phi}(\xi)=\frac{3}{8}-\frac{1}{2}\cos(2\xi)+\frac{2i}{3}\cos(3\xi)+\frac{1}{8}\cos(4\xi).
\end{equation*}
Here, $\sup|\hat{\phi}|=1$ and is attained only at $\xi_1=\pi/2$ and $\xi_2=-\pi/2$ in $(-\pi,\pi]$. It follows that
\begin{equation*}
\log\left(\frac{\hat{\phi}(\xi\pm\pi/2)}{\hat{\phi}(\pm\pi/2)}\right)=\pm2i\xi\mp\frac{5i}{3}\xi^3-\frac{7}{3}\xi^4(1+o(1)) \mbox{ as }\xi\rightarrow 0
\end{equation*}
and so $\alpha_1=2$, $\alpha_2=-2$, $\beta_1=5i/3$, $\beta_2=-5i/3$ and $m=m_1=m_2=3$. In view of Theorem \ref{mainstrong} (or Theorem \ref{stongestresult}),
\begin{eqnarray}\label{doubleairyapprox1}\nonumber
\phi^{(n)}(x)&=& n^{-1/3}e^{-ix\pi/2}H^{\frac{5i}{3}}_3\left(\frac{x-2n}{n^{1/3}}\right)+n^{-1/3}e^{ix\pi/2}H^{\frac{-5i}{3}}_3\left(\frac{x+2n}{n^{1/3}}\right)+o(n^{-1/3})\\\nonumber
&=&(5n)^{-1/3}(i)^x\left[(-1)^xH_3^{\frac{i}{3}}\left(\frac{x-2n}{(5n)^{1/3}}\right)+H_3^{\frac{i}{3}}\left(-\frac{x+2n}{(5n)^{1/3}}\right)\right]+o(n^{-1/3})\\\nonumber
&=&(5n)^{-1/3}(i)^x\left[(-1)^x\mbox{Ai}\left(\frac{x-2n}{(5n)^{1/3}}\right)+\mbox{Ai}\left(-\frac{x+2n}{(5n)^{1/3}}\right)\right]+o(n^{-1/3})\\
&=&f(n,x)+o(n^{-1/3})
\end{eqnarray}
uniformly for $x\in\mathbb{Z}$, where $\mbox{Ai}$ denotes the standard Airy function. To appreciate Theorems \ref{mainstrong} and \ref{mainweak}, we consider $\phi^{(n)}(x)$ for $n=10000$ near the right packet ($19700\leq x\leq 20150$) corresponding to drift constant $\alpha_1=\pi/2$. Figure \ref{fig:ex3simulation} shows the graph of $\Re(\phi^{(n}(x))$ and Figure \ref{fig:ex3twoairy} shows the approximation, $f(n,x)$ defined by \eqref{doubleairyapprox1}.

\begin{figure}[h!]
\centering\includegraphics[width=5in]{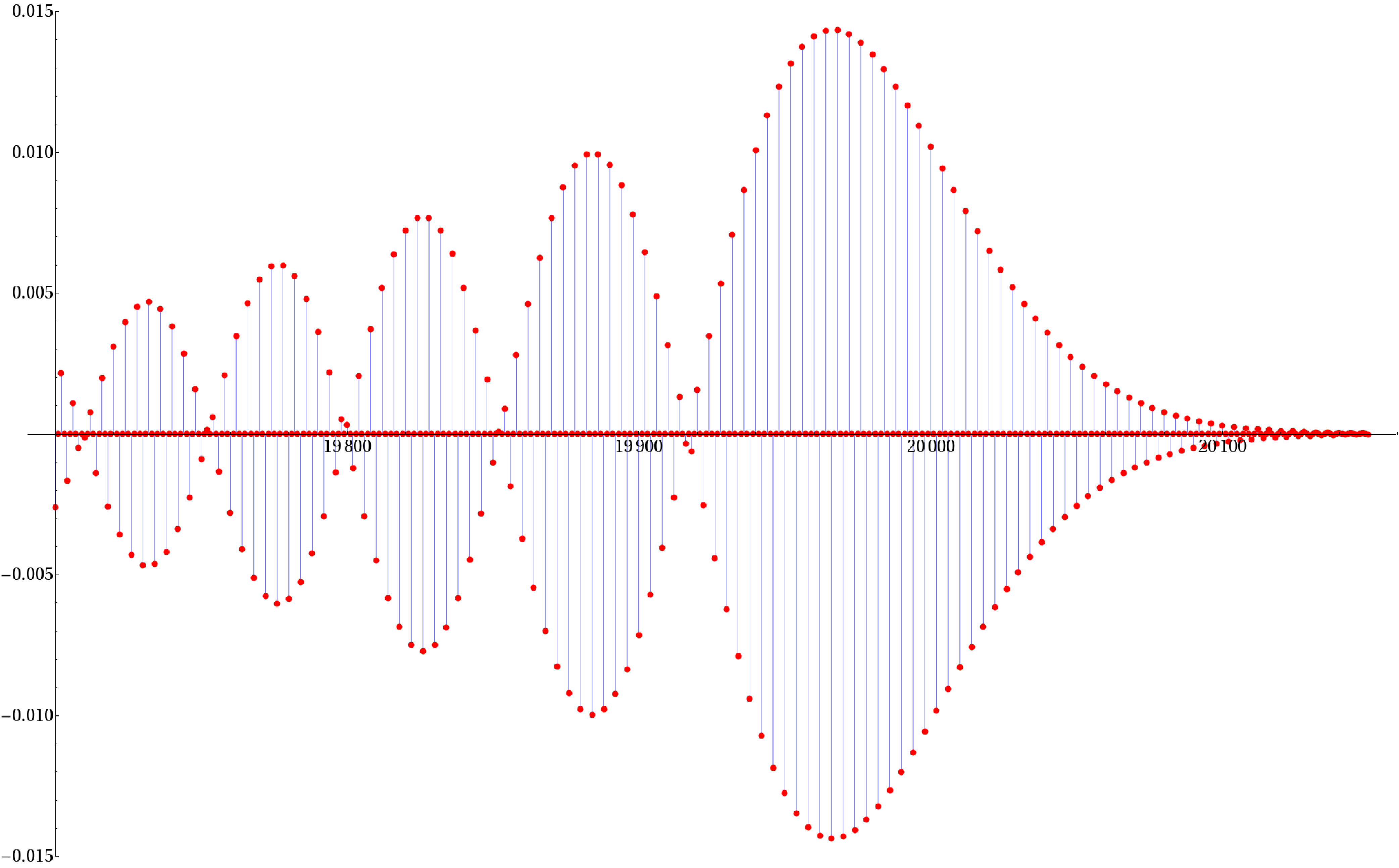}
\caption{ $\Re(\phi^{(n)})$ for $n=10000$}
\label{fig:ex3simulation}
\end{figure}
\begin{figure}[h!]
\centering\includegraphics[width=5in]{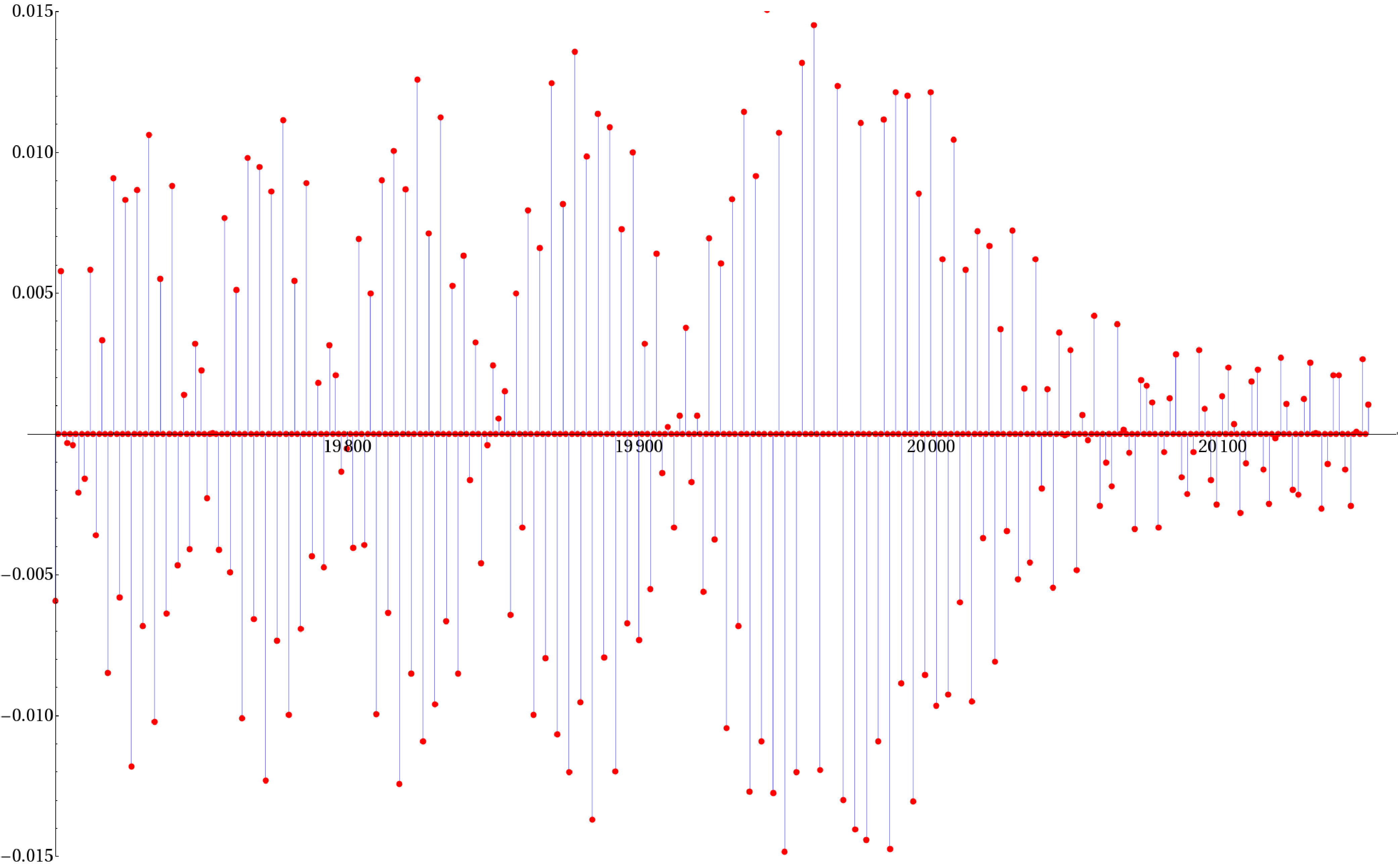}
\caption{ $\Re(f(n,x))$ for $n=10000$}
\label{fig:ex3twoairy}
\end{figure}
\begin{figure}[h!]
\centering\includegraphics[width=5in]{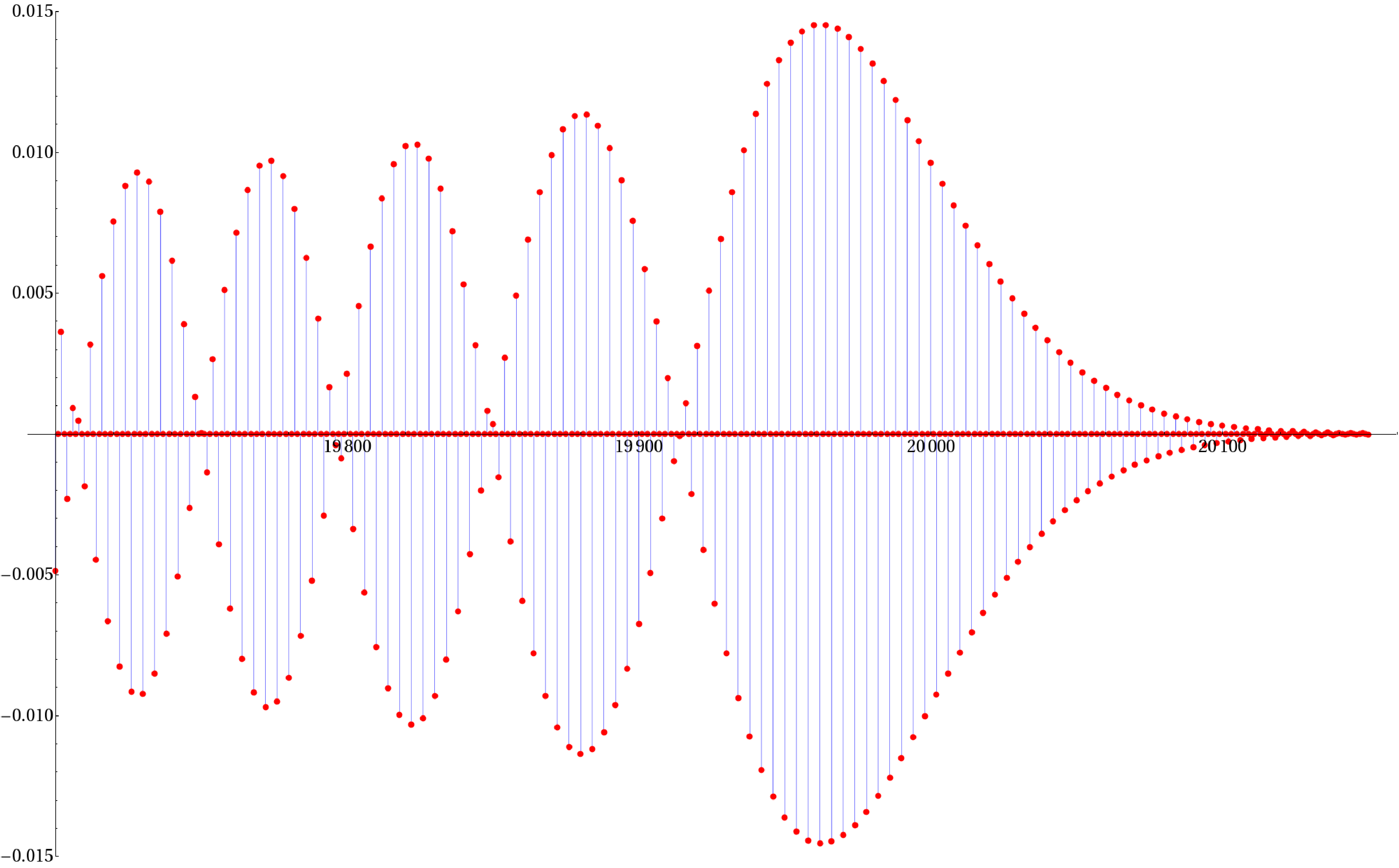}
\caption{$\Re(g(n,x))$ for $n=10000$}
\label{fig:ex3oneairy}
\end{figure}

\break

\noindent What appears to be noise in Figure \ref{fig:ex3twoairy} is the oscillatory tail of the term \break $(5n)^{-1/3}(i)^x\mbox{Ai}\left(-\frac{x+2n}{(5n)^{1/3}}\right)$ in \eqref{doubleairyapprox1}. Removing this term, we consider
\begin{eqnarray*}
 g(n,x)&=&f(n,x)-(5n)^{-1/3}(i)^x\mbox{Ai}\left(-\frac{x+2n}{(5n)^{1/3}}\right)\\
&=&(5n)^{-1/3}(-i)^x\mbox{Ai}\left(\frac{x-2n}{(5n)^{1/3}}\right).
\end{eqnarray*}
Upon choosing $\alpha_1=\pi/2$, an appeal to Theorem \ref{mainweak} gives the approximation
\begin{eqnarray*}
\phi^{(n)}(\lfloor 2n+zn^{1/3}\rfloor)&=&n^{-1/3}e^{-(i\lfloor 2n+zn^{1/3}\rfloor \pi/2)}H_3^{\frac{5i}{3}}(z)+o(n^{-1/3})\\
&=&(5n)^{-1/3}(-i)^{\lfloor2n+zn^{1/3}\rfloor}\mbox{Ai}\left(\frac{z}{5^{1/3}}\right)+o(n^{-1/3})
\end{eqnarray*}
uniformly for $z$ in any compact set; here, $\xi_{j_1}=\xi_{1}=\pi/2$ and $\beta_{j_1}=\beta_{1}=5/3$. For such $z$, it follows that
\begin{equation*}
\phi^{(n)}(\lfloor 2n+zn^{1/3}\rfloor)=g(n,\lfloor 2n+zn^{1/3}\rfloor)+o(n^{-1/3})
\end{equation*}
from which we see that $g$ is essentially the approximation yielded by Theorem \ref{mainweak}. As Figure \ref{fig:ex3oneairy} shows, $g(n,x)$ is a much better approximation to $\phi^{(n)}(x)$ at $n=10000$ for $19700\leq x\leq20150$. 

\subsection{Heat kernel at purely imaginary time}\label{ex1}

We return to the example given in the introduction and justify the claims made
therein. Let $\phi$ be given by \eqref{ex1def}. A quick computation shows that 
\begin{equation*}
\hat{\phi}(\xi)=1-\frac{i}{2}\sin^2(\xi/2)-\sin^{4}(\xi/2),
\end{equation*}
where the supremum of $|\hat{\phi}|$ on the interval $(-\pi,\pi]$ is only
attained at $\xi_1=0$. In the notation of Proposition \ref{typesprop}, we write
\begin{equation*} 
\Gamma(\xi)=\log\left(\frac{\hat{\phi}(\xi)}{\hat{\phi}(0)}\right)=-i\xi^2(\frac{1
}{8}-\frac{1}{96}\xi^2)-\frac{7}{128}\xi^4+\sum_{l=5}^\infty a_l\xi^l
\end{equation*}
on a neighborhood of $0$ and so $m=m_1=2$, $\alpha_1=0$ and
$\beta_1=i/8$ in view of Convention \ref{constantsconvention}. By Theorem \ref{mainbound}, there are constants $C,C'>0$ such
that
\begin{equation}
Cn^{1/2}\leq \|\phi^{(n)}\|_{\infty}\leq C'n^{1/2}.
\end{equation}
By Theorem \ref{mainweak} and using \eqref{heatker} we may also conclude that
\begin{eqnarray*}
\phi^{(n)}(\lfloor xn^{1/2}\rfloor)&=&n^{-1/2}H_2^{i/8}(x)+o(n^{-1/2})\\
&=&\frac{n^{-1/2}}{\sqrt{4\pi i/8}}e^{-8|x|^2/4i}+o(n^{-1/2}),
\end{eqnarray*}
where the limit is uniform for $x$ in any compact set. 

\subsection{A real-valued function supported on three points}\label{ex3}

\noindent In the article \cite{DSC1}, Example 2.4 and Proposition 2.5 therein described the asymptotic behavior of the convolution powers of an arbitrary real valued function $\phi$ supported on three (consecutive) points.  In the notation of the proposition we define $\phi$ by 
\begin{equation}
 \phi(0)=a_0,\hspace{.1cm}\phi(\pm 1)=a_{\pm}\mbox{ and }\phi=0\mbox{ otherwise},
\end{equation}
where $a_0,a_{+},a_{-}\in\mathbb{R}$. As in \cite{DSC1}, we also assume that $a_0>0$ and that $a_+\neq 0$ or $a_{-}\neq 0$; this assumption guarantees that $\phi$ has admissible support. Proposition 2.5 of \cite{DSC1} describes the asymptotic behavior of $\phi^{(n)}$ for all values of $a_0, a_{\pm}$ except the special case in which $a_{+}a_{-}<0$ and $4|a_{+}a_{-}|=a_0|a_{+}+a_{-}|.$ Theorem \ref{mainstrong} of the present article allows us to treat this final case with ease. 
\begin{proposition}
Let $\phi$ be as above and assume additionally that $a_{+}a_{-}<0$ and $4|a_{+}a_{-}|=a_0|a_{+}+a_{-}|.$ If $a_{+}+a_{-}>0$ then
\begin{equation}\label{threepointconclusion1}
\phi^{(n)}(x)=n^{-1/3}A^n H_3^{\beta}\left(\frac{x-\alpha n}{n^{1/3}}\right)+o(A^n n^{-1/3})
\end{equation}
uniformly for $x\in\mathbb{Z}$, where $A=a_0+a_{+} +a_{-}$, $\alpha=(a_{+}-a_{-})/A$ and $\beta=i(\alpha-\alpha^3)/6$.
 
If  $a_{+}+a_{-}<0$ then
\begin{equation}
\phi^{(n)}(x)=n^{-1/3}A^n e^{-ix\pi} H_3^{\beta}\left(\frac{x-\alpha n}{n^{1/3}}\right)+o(A^n n^{-1/3})
\end{equation}
uniformly for $x\in\mathbb{Z}$, where $A=a_0-a_{+} -a_{-}$, $\alpha=(a_{-}-a_{+})/A$ and $\beta=i(\alpha-\alpha^3)/6$.

In either case, there is a compact set $K$ for which the $\|\phi^{(n)}\|_\infty$ is attained on the set $(\alpha n+Kn^{1/3})$.
\end{proposition}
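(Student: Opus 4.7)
The plan is to verify the hypotheses of Theorem \ref{mainstrong} for the normalized function $\tilde\phi := \phi/A$, read off its conclusion, and multiply through by $A^n$. As a first step, I would locate $\Omega$: writing $s = a_++a_-$ and $d = a_+-a_-$, a direct computation gives $\hat\phi(\xi) = a_0 + s\cos\xi + id\sin\xi$ and
\begin{equation*}
|\hat\phi(\xi)|^2 = 4a_+a_-\cos^2\xi + 2a_0s\cos\xi + (a_0^2+s^2-4a_+a_-).
\end{equation*}
Since $a_+a_- < 0$, this is a concave quadratic in $u = \cos\xi \in [-1,1]$ with vertex at $u^\ast = -a_0s/(4a_+a_-)$. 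The special hypothesis $4|a_+a_-| = a_0|s|$ forces $u^\ast = \operatorname{sgn}(s) \in \{-1,1\}$, so the supremum $A$ of $|\hat\phi|$ on $(-\pi,\pi]$ is attained at a single point: $\xi_1 = 0$ when $s > 0$ (with $A = a_0+s$), and $\xi_1 = \pi$ when $s < 0$ (with $A = a_0-s$). In either case $\Omega = \{\xi_1\}$ and $R = 1$ in Convention \ref{constantsconvention}.

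Next, I would compute the Taylor series of $\Gamma(\xi) = \log(\hat\phi(\xi+\xi_1)/A)$ up to fourth order. Consider first $s>0$, $\xi_1 = 0$: expanding $\hat\phi(\xi)/A$ in powers of $\xi$ and feeding the result into $\log(1+z)$ yields coefficient of $\xi$ equal to $id/A$ (so $\alpha = d/A = (a_+-a_-)/A$); coefficient of $\xi^2$ equal to $(d^2-sA)/(2A^2) = -(4a_+a_-+a_0s)/(2A^2)$, which vanishes precisely under the hypothesis; coefficient of $\xi^3$ equal to $-i(\alpha-\alpha^3)/6$, after invoking the identity $s/A = \alpha^2$ that follows from $d^2 = sA$; and coefficient of $\xi^4$ equal to $-sa_0/(8A^2)$, a strictly negative real number. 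Comparing with Definition \ref{types}, this places $\xi_1 = 0 \sim (2;3)$ with constants $\alpha$ and $\beta = i(\alpha-\alpha^3)/6$, exactly as claimed. The case $s<0$ is mechanically identical at $\xi_1 = \pi$, because $\hat\phi(\xi+\pi) = a_0 - s\cos\xi - id\sin\xi$ differs from $\hat\phi(\xi)$ by the substitution $(s,d)\mapsto(-s,-d)$; this produces the $e^{-ix\pi}$ phase factor, the sign change $\alpha = (a_--a_+)/A$, and the same formula for $\beta$.

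Finally, since $m_\phi = 3 > 2$, hypothesis \eqref{mainstronghypoth} of Theorem \ref{mainstrong} holds for $\tilde\phi$. Its convergence conclusion \eqref{mainstrongconvergenceconclusion}, applied with $R = 1$, reads
\begin{equation*}
\tilde\phi^{(n)}(x) = n^{-1/3}\,e^{-ix\xi_1}\,H_3^{\beta}\!\left(\frac{x-\alpha n}{n^{1/3}}\right) + o(n^{-1/3})
\end{equation*}
uniformly in $\mathbb{Z}$; multiplying through by $A^n$ and substituting $e^{-ix\cdot 0}=1$ or $e^{-ix\pi}$ according to the sign of $s$ gives both asymptotic formulas of the proposition. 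The concentration of $\|\phi^{(n)}\|_\infty$ on $(\alpha n + Kn^{1/3})\cap\mathbb{Z}$ is immediate from \eqref{mainstrongsetconclusion} in the $R = 1$ case. The main obstacle is the Taylor computation itself: one must carry the expansion through fourth order and deploy the identity $d^2 = sA$ (equivalent to the hypothesis) twice --- once to cancel the $\xi^2$ term and once to reduce the $\xi^3$ coefficient to the clean form $-i(\alpha-\alpha^3)/6$ --- so that the classification $(2;3)$ and the exact value of $\beta$ emerge transparently; beyond that, nothing is needed that is not a direct invocation of Theorem \ref{mainstrong}.
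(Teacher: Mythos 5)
Your proposal follows the paper's own route essentially exactly: normalize by $A$, locate $\Omega$ (here you verify directly via the concave quadratic in $\cos\xi$, a step the paper delegates to \cite{DSC1}), expand $\Gamma$ to fourth order using the identity $d^2=sA$, classify $\xi_1\sim(2;3)$ with the stated $\alpha,\beta$, and read off both conclusions from Theorem~\ref{mainstrong}; the $s<0$ case via the substitution $(s,d)\mapsto(-s,-d)$ at $\xi_1=\pi$ is also how the paper argues. The argument is correct and the approach is the same.
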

\begin{proof}
We may write
\begin{equation*}
\hat{\phi}(\xi)=a_0+a_{+}e^{i\xi}+a_{-}e^{-i\xi}=a_0+ (a_{+}+a_{-})\cos(\xi)+i(a_{+}-a_{-})\sin(\xi).
\end{equation*}
Under the assumption that $4|a_{+}a_{-}|=a_0|a_{+}+a_{-}|$ and $a_{+}+a_{-}>0$, it was shown in \cite{DSC1} that $|\hat{\phi}|$ is maximized only at $0=\xi_1\in(-\pi,\pi]$ and in which case this maximum takes the value $A=a_0+a_{+} +a_{-}$. 

Set $\psi(x)=\phi(x)/A$. It follows immediately that $A^n\psi^{(n)}(x)=\phi^{(n)}(x)$ and $\sup|\hat{\psi}|=1$ which is taken only at $\xi_1=0$. In the notation of Proposition \ref{typesprop} we have
\begin{eqnarray*}
\Gamma(\xi)&=&\log\left(\frac{\hat{\psi}(\xi)}{\hat{\psi}(0)}\right)=i\left(\frac{(a_{+}-a_{-})}{a_0+a_{+}+a_{-}}\right)\xi\\
&&-\frac{i}{6}\left(\frac{(a_{+}-a_{-})(a_{0}^2-a_0 a_{+}-a_0 a_{-}-8a_{+}a_{-})}{(a_0+a_{+}+a_{-})^3}\right)\xi^3
-\gamma\xi^4+\sum_{l=5}^{\infty}a_l\xi^l
\end{eqnarray*}
on a neighborhood of $0$, where $\gamma>0$. Setting $\alpha=(a_{+}-a_{-})/A$ and using the fact that $4|a_{+}a_{-}|=a_0|a_{+}+a_{-}|$, we write
\begin{equation}\label{threepointstaylor}
\Gamma(\xi)=i\alpha\xi-\frac{i}{6}(\alpha-\alpha^3)\xi^3-C\xi^4+\sum_{l=5}^{\infty}a_l\xi^l
\end{equation}
on a neighborhood of $0$.  By a quick inspection of \eqref{threepointstaylor} it is clear that $\psi$ meets the hypotheses of Theorem \ref{mainstrong} with $m=m_1=3$, $\alpha=\alpha_1$ and $\beta_1=i(\alpha-\alpha^3)/6$. Therefore
\begin{equation}\label{psithreepoints1}
\psi^{(n)}(x)=n^{-1/3}H_3^{\beta}\left(\frac{x-\alpha n}{n^{1/3}}\right)+o(n^{-1/3})
\end{equation}
uniformly for $x\in \mathbb{Z}$. The limit \eqref{threepointconclusion1} follows immediately by multiplying \eqref{psithreepoints1} by $A^n$. An appeal to \eqref{mainstrongsetconclusion} of Theorem \ref{mainstrong} shows that $\|\psi^{(n)}\|_{\infty}$ and hence $\|\phi^{(n)}\|_{\infty}$ is indeed attained on the set $(\alpha n+Kn^{1/3})$ for some compact set $K$.

In the case that $a_{+}+a_{-}<0$ it was shown in \cite{DSC1} that $|\hat{\phi}|$ attains its only maximum at $\xi_1=\pi\in(-\pi,\pi]$. Upon setting $A=a_0-a_{+} -a_{-}$, $\psi(x)=\phi(x)/A$ and considering the Taylor expansion of $\log(\hat{\psi}(\xi+\xi_1)/\hat{\psi}(\xi_1))$, the result follows by an argument similar to that given for the previous case.
\end{proof}

\noindent\small{\textbf{Acknowledgements}\hspace{.25cm}The authors would like to thank the referee for many useful suggestions and comments. }

\end{document}